\documentclass[english,12pt]{article}
\usepackage[utf8]{inputenc}
\usepackage{amssymb,amsmath,amsfonts,amsthm,rotating,setspace,graphicx,float,booktabs,caption}
\usepackage{amssymb,amsmath,amsthm,rotating,setspace,verbatim}
\usepackage{enumerate}
\usepackage{multirow}
\usepackage{cancel}
\usepackage{lscape}
\usepackage{tikz,pifont}
\usepackage{esint}
\usepackage{mathrsfs}
\usepackage{geometry,url}
\usepackage{bbm}
\usepackage{paralist}
\usepackage[hidelinks]{hyperref}
\usepackage[title]{appendix}
\usepackage{latexsym}
\usepackage{epsfig}
\usepackage{color}

\usepackage{authblk,  empheq}

\topmargin -0.90in 
\textheight 9.3 in 
\textwidth 6.0in 
\oddsidemargin 0.2in 
\evensidemargin 0.2in

\newtheorem{thm}{Theorem}[section]
\newtheorem{lemma}[thm]{Lemma}
\newtheorem{prop}[thm]{Proposition}
\newtheorem{cor}[thm]{Corollary}

\theoremstyle{definition}
\theoremstyle{definition}

\numberwithin{equation}{section}

\def\K1{K_{\alpha, \beta}}

\def\B2x{B_{2R}(x)}

\def\Ix{I_{\alpha,\beta,\gamma}(\rho,x)}
\def\Jx{J_{\alpha,\beta,\theta}(\rho,x)}
\def\ds{\displaystyle}

\def\R{\mathbbm{R}}

\def\B1{B_1}
\newcommand{\abs}[1]{\lvert#1\rvert}


\title{Isolated singularities for elliptic equations with convolution terms in a punctured ball}
\author{Marius Ghergu\footnote{School of Mathematics and Statistics,
        University College Dublin, Belfield, Dublin 4, Ireland;
        {\tt marius.ghergu@ucd.ie}; ORCID: 0000-0001-9104-5295} \;\footnote{Institute of Mathematics Simion Stoilow of the Romanian Academy, 21 Calea Grivitei St., 010702 Bucharest, Romania}
        $\;\;$            and    $\;$
    Zhe Yu\footnote{School of Mathematics and Statistics,
        University College Dublin, Belfield, Dublin 4, Ireland;
        {\tt zhe.yu@ucdconnect.ie }; ORCID: 0000-0002-2261-4981} }

\date{}
\begin{document}

\maketitle

\begin{abstract}
\vspace{0.3cm} 
\noindent The purpose of this article is two-fold. First, we investigate the inequality
$$
-\Delta u+V(x) u\geq f\quad\mbox{ in } B_1\setminus\{0\}\subset \mathbbm{R}^N , N \geq 2,
$$
where $f\in L^1_{loc}(B_1)$. If $V\geq 0$ is radially symmetric, we provide optimal conditions for which any solution $0\leq u\in \mathcal{C}^2(B_1\setminus\{0\})$ of the above inequality satisfies $u, \Delta u, V(x)u\in L^1_{loc}(B_1)$. This  extends a result of H. Brezis and P.-L. Lions (1982), originally established for constant potentials $V$. Second, we investigate the equation 
$$\displaystyle -\Delta u + \lambda V(x) u = (K_{\alpha, \beta} * u^p) u^q \quad\text{in } B_1 \setminus \{0\},$$
where $0\leq V\in \mathcal{C}^{0, \nu}( \overline B_1\setminus\{0\})$, $0<\nu<1$, $\lambda, p, q>0$ and 
$$K_{\alpha, \beta}(x) = |x|^{-\alpha}\log^{\beta}\frac{2e}{|x|}, \quad\text{where } 0 \leq \alpha < N, \beta \in \mathbbm{R}.$$
For $N \geq 3$, we establish sharp conditions on the exponents $\alpha, \beta, p, q$ under which singular solutions exist and exhibit the asymptotic behavior $u(x) \simeq |x|^{2-N}$ near the origin. For $N = 2$, we provide a classification of the existence and boundedness of solutions based on the local behavior of the potential $V(x)$ near the origin.

\noindent 
\end{abstract}

\noindent{\bf Keywords:} Semi-linear elliptic equations; 
Singular solutions; Convolution term.
\medskip

\noindent{\bf 2020 AMS MSC:} 35J61; 35A23; 35A21; 35B33; 35A08; 35J75

\newpage
\section{Introduction}

The study of isolated singularities in semi-linear elliptic equations has been a central focus in mathematical analysis, particularly for understanding the behavior of solutions near the singularity point. As it is well known, the fundamental solutions of the Laplacian operator play a crucial role in many works. A celebrated result due to Brezis and Lions \cite{BL81} in the early 1980s states that\footnote{The solutions were considered in the class of distributions in the original article \cite{BL81}.}:

\medskip
\noindent{\bf Theorem A.} {\it If $0\leq u\in \mathcal{C}^2(B_1\setminus\{0\})$ is a solution of 
\begin{equation}\label{bl}
-\Delta u+au\geq f \quad\mbox{ in } B_1\setminus\{0\}
\end{equation}
with $f\in L^1_{loc}(B_1)$, then $u, \Delta u\in L^1_{loc}(B_1)$ and there exists $h \in L^1_{loc}(B_1)$ and a non-negative number $c\geq 0$ such that }
\begin{equation}\label{h}
-\Delta u=h+c\delta_0 \quad\mbox{ in } \mathscr{D}'(B_1).
\end{equation}
Lions \cite{L80} extended the above result by considering the following problem:
\begin{equation}\label{lions}
\left\{\begin{aligned}
   -\Delta u &= u^p &\text{in}\; &B_1\setminus \{0\}, \\
   u &= 0 &\text{on}\; &\partial B_1. 
\end{aligned}\right.
\end{equation}
It is shown in \cite{L80} that any solution $u$ of the above problem has either a removable singularity at the origin or it satisfies:
$$-\Delta u = u^p + \alpha\delta_0 \quad\text{in } B_1.$$ 
Moreover, if $p \geq \frac{N}{N-2}$, then $\alpha = 0$ and if $1 < p < \frac{N}{N-2}$, then there exists $\alpha_* > 0$ such that \eqref{lions} has: 
\begin{itemize}
    \item two classical solutions for $0 < \alpha < \alpha_*$,
    \item one classical solution for $\alpha = \alpha_*$,
    \item no solution for $\alpha > \alpha_*$. 
\end{itemize}
A more recent research by Chen and Zhou \cite{CZ18} extended the study of \eqref{lions} to the case of semi-linear elliptic equations involving Hardy operator:
\begin{equation}\label{CZ}
\left\{\begin{aligned}
   -\Delta u + \mu |x|^{-2}u&= u^p &\text{in}\; &B_1\setminus \{0\}, \\
   u &= 0 &\text{on}\; &\partial B_1. 
\end{aligned}\right.
\end{equation}
Similar to  Lions' work, Chen and Zhou classified the solutions of \eqref{CZ} based on new critical Hardy exponents (see also \cite{CQZ21, D02} for further extensions). \medskip\\
The purpose of the present work is two-fold. First, we want to extend the result in Theorem A to a more general case. Second, we are interested in the study of a non-local equation governed by a non-linear convolution term. 
This is motivated by the recent studies carried out in \cite{BP01,CZ16,CZ17,CZ23, CRSW25, FG20,G23,GKS21, GMM21, GT16, GY23, HR21, HRW22, JYZ25, LMSS25, LZX24, MT25, M25, PRS24,PN25, SR24, WS25,YY25}.\\
Before we proceed to the statements of our main results, let us make precise the notations used in this article:
\begin{itemize}
    \item Given a continuous function $u:B_1\setminus\{0\}\to \R$, we denote by $\overline u(r)$ its spherical average over the sphere $\partial B_r$, $0<r<1$, that is,
    $$\overline u(r)=\frac{1}{|\partial B_r|}\int_{\partial B_r} u(x) dS(x),$$
    where $S$ denotes the spherical measure in $\R^{N-1}$. 
    \item For two positive functions $f, g:B_1\setminus\{0\}\to (0, \infty$), we use the symbol $f(x)\simeq g(x)$ to denote that the quotient $f/g$ is bounded between two positive constants in $B_1\setminus\{0\}$.
    \item $\delta_0$ denotes the Dirac delta measure in $\R^N$. 
    \item By $C, C_1, C_2, c, ... $ we denote generic positive constants whose value may change on every occurrence.
\end{itemize}
We are now ready to state our main results. In relation to Theorem A, we have the following extension:
\begin{thm}\label{thm1}
    Let $0\leq u\in \mathcal{C}^2(B_1\setminus\{0\})$ be a solution of  
\begin{equation}\label{h1}
    -\Delta u+V(|x|) u\geq f \quad\mbox{ in }\; B_1\setminus\{0\},
\end{equation}
where $f\in \mathcal{C}(B_1\setminus\{0\})\cap L^1_{loc}(B_1)$ and $V: (0, 1] \mapsto (0, \infty)$ is continuous and satisfies
\begin{equation}\label{dini}
\left\{\begin{aligned}
    &\int_0^1 s\log\frac{1}{s}V(s) ds < \infty\quad&\text{if }N = 2,\\
    &\int_0^1 sV(s) ds < \infty \quad&\text{if }N \geq 3.
\end{aligned}\right.
\end{equation}
Then, 
\begin{equation}\label{cond}
u,\; \Delta u,\;  V(|x|)u \in L^1_{loc}(\B1),
\end{equation}
and there exists $h\in L^1_{loc}(B_1)$ such that \eqref{h} holds.
\end{thm}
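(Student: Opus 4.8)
The strategy is to pass to spherical averages --- which turns \eqref{h1} into a one-dimensional differential inequality --- and then run a Brezis--Lions type argument adapted to the weight generated by $V$. Put $\phi(r):=\overline u(r)$ and $p(r):=r^{N-1}$. Since $u\in\mathcal C^2(B_1\setminus\{0\})$, averaging commutes with the radial part of $\Delta$, and $V$ is radial, so the nonnegative function $\phi\in\mathcal C^2(0,1)$ satisfies the \emph{equality}
$$-(p\phi')'+pV\phi=p\,\overline f+\widetilde\rho\qquad\text{on }(0,1),$$
where $\widetilde\rho(r):=r^{N-1}\,\overline{\bigl(-\Delta u+V(|x|)u-f\bigr)}(r)\ge 0$ is continuous on $(0,1)$ and $p\overline f\in L^1(0,1/2)$ because $f\in L^1_{loc}(B_1)$. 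Since $u$, $V(|x|)u$ and $\Delta u$ are continuous away from the origin, \eqref{cond} is equivalent to the three statements $\int_0^{1/2}p\phi<\infty$, $\int_0^{1/2}pV\phi<\infty$ and $\int_0^{1/2}\widetilde\rho<\infty$, and once the first two hold and $\int_0^{1/2}\widetilde\rho<\infty$ one also gets $\int_{B_{1/2}}(-\Delta u+V(|x|)u-f)<\infty$.

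On a small interval $(0,\delta)$ I would first produce, by a contraction argument applied to the integral equation equivalent to $-(p\theta')'+pV\theta=0$, $\theta(0^+)=1$, a solution $\theta_1\in\mathcal C^2(0,\delta)$ with $\theta_1>0$ and $\theta_1(0^+)=1$; the contraction estimate is controlled precisely by $\int_0^\delta sV(s)\,ds$ when $N\ge3$ and by $\int_0^\delta s\log\tfrac1s\,V(s)\,ds$ when $N=2$, and this is the (sharp) place where \eqref{dini} enters. Substituting $\phi=\theta_1\omega$ and using $-(p\theta_1')'+pV\theta_1=0$ converts the equation above into divergence form
$$-(P\omega')'=\theta_1 p\,\overline f+\theta_1\widetilde\rho\qquad\text{on }(0,\delta),$$
with $P:=p\theta_1^2\simeq r^{N-1}$, $\omega:=\phi/\theta_1\ge0$, and right-hand side of the form $G_+-|h|$ where $G_+:=\theta_1\widetilde\rho\ge0$ and $h:=-\theta_1 p\overline f\in L^1(0,\delta)$.

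The core of the proof is the resulting integrability estimate, and this is where I expect the main difficulty. Integrating once, $P\omega'(r)=P\omega'(\delta)+\int_r^\delta G_+-\int_r^\delta|h|$; if $\int_0^\delta G_+=+\infty$ then $P\omega'(r)\to+\infty$, hence $\omega'(r)\ge c\,r^{1-N}$ near $0$ and $\omega(r)\to-\infty$, contradicting $\omega\ge0$, and the same reasoning gives $L:=\lim_{r\to0^+}P\omega'(r)\le0$. Therefore $\int_0^\delta G_+<\infty$, which already yields $\int_0^\delta\widetilde\rho<\infty$. Integrating once more,
$$\omega(r)=\omega(\delta)-L\int_r^\delta\frac{ds}{P(s)}-\int_r^\delta\frac{\rho(s)}{P(s)}\,ds,\qquad \rho(s):=\int_0^s\bigl(|h|-G_+\bigr),$$
and since $\rho(s)\to0$ and $\int_r^\delta P^{-1}\simeq\Gamma(r)$ with $\Gamma(r):=r^{2-N}$ if $N\ge3$ and $\Gamma(r):=\log\tfrac1r$ if $N=2$, one gets $0\le\omega(r)\le C\,\Gamma(r)$, hence $\phi(r)\le C'\Gamma(r)$ near the origin. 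This gives $\int_0^{1/2}p\phi<\infty$ immediately, and, using \eqref{dini} a second time, $\int_0^{1/2}pV\phi\le C\int_0^{1/2}pV\Gamma<\infty$; together with $\int_0^\delta\widetilde\rho<\infty$ this proves \eqref{cond}. The delicate point here is precisely to squeeze out of the single sign condition $\omega\ge0$ both the sharp bound $\omega(r)=O(\Gamma(r))$ and the finiteness of $\int_0^\delta\widetilde\rho$.

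It remains to obtain \eqref{h}. With $u\in L^1_{loc}(B_1)$, the nonnegative continuous function $G:=-\Delta u+V(|x|)u-f$ on $B_1\setminus\{0\}$ now lies in $L^1_{loc}(B_1)$ (as $\int_{B_{1/2}}G<\infty$), so $H:=f-V(|x|)u+G\in L^1_{loc}(B_1)$ and $-\Delta u=H$ off the origin; hence $-\Delta u-H$ is a distribution supported at $\{0\}$. On a small ball, letting $w$ be a local Newtonian potential of $H$, the function $u-w$ is harmonic off the origin and in $L^1_{loc}$, so $u=w+h_0+c_0E+\sum_i c_i\,\partial_i E$ with $h_0$ harmonic and $E$ the fundamental solution (only $D^\alpha E$ with $|\alpha|\le1$ occur, being the locally integrable ones). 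Finally I would invoke $u\ge0$: comparing with $\int_{B_r}|w+h_0|=o(r)$ forces $c_i=0$, since a nontrivial dipole has negative part of mass $\gtrsim r$ on a cone near $0$; and, averaging over spheres and using $\overline{\partial_i E}\equiv0$ together with $\overline w(r)=o(E(r))$ (dominated convergence in Newton's theorem), the nonnegativity of $\phi=\overline u$ forces $c_0\ge0$. Thus $-\Delta u=H+c_0\delta_0$ in $\mathscr{D}'(B_1)$, which is \eqref{h}; besides the core estimate, the remaining technical points are the construction of $\theta_1$ at the optimal threshold dictated by \eqref{dini} and this exclusion of dipole terms.
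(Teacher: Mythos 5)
Your argument is correct, but the key step is genuinely different from the paper's. The paper also passes to spherical averages, but then runs a direct bootstrap: it sets $g_R(r)=\int_{B_R\setminus B_r}V(|x|)u$, integrates the differential inequality twice to get $\overline u(r)\leq \Gamma(r)\,g_R(r)+C\Gamma(r)$, multiplies by $r^{N-1}V(r)$ and integrates again to obtain $g_R(r)\leq\big(g_R(r)+C\big)\int_r^R s\log\frac1s V(s)\,ds$ (resp. $\int_r^R sV(s)\,ds$), and uses the smallness of the Dini integral on $(0,R)$ to absorb $\tfrac12 g_R(r)$ into the left-hand side; once $Vu\in L^1_{loc}$ and $u\in L^1_{loc}$ are known, it simply cites Theorem A with $F=f-V(|x|)u\in L^1_{loc}$ to get $\Delta u\in L^1_{loc}$ and \eqref{h}. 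You instead construct a positive bounded solution $\theta_1$ of the homogeneous radial equation $-(r^{N-1}\theta')'+r^{N-1}V\theta=0$ with $\theta_1(0^+)=1$ (the Picard/contraction norm being exactly the Dini integral in \eqref{dini}), and use the Sturm--Liouville substitution $\phi=\theta_1\omega$ to reduce to the potential-free weight $P\simeq r^{N-1}$, after which the classical Brezis--Lions radial argument yields both $\int_0^\delta\widetilde\rho<\infty$ and $\omega=O(\Gamma)$ in one pass; you then re-derive the representation \eqref{h} from scratch (structure of distributions supported at a point, exclusion of dipoles via $u\geq0$) rather than quoting Theorem A. Both routes invoke \eqref{dini} at exactly two places (to make the absorption, resp. the contraction, work near $0$, and to pass from $\overline u=O(\Gamma)$ to $Vu\in L^1_{loc}$). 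Your approach is longer but more structural — it exhibits the Dini condition as precisely the condition for the radial operator to admit a bounded positive solution near the origin — while the paper's absorption trick is shorter and leans on the known Theorem A. Two cosmetic points: your once-integrated identity $P\omega'(r)=P\omega'(\delta)+\int_r^\delta G_+-\int_r^\delta|h|$ should be an inequality (the exact term is $\int_r^\delta\theta_1 p\overline f$, not $-\int_r^\delta|h|$), and likewise the second-integration formula; this does not affect the conclusions $\int_0^\delta G_+<\infty$, $L\leq0$ and $0\leq\omega\leq C\Gamma$.
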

\noindent
For $V\equiv a>0$, Theorem \ref{thm1} implies Theorem A above.
Note that $f \in L^1_{loc}(B_1)$ is a necessary condition for the existence of solutions to \eqref{h1} (see Lemma \ref{lm_3}). We should also note that condition \eqref{dini} is optimal at least when $f\equiv 0$. Indeed, let us denote
\begin{equation}\label{E}
E(r)=
\begin{cases}
\ds \frac{1}{2\pi} \log\frac{e}{r} &\quad\mbox{if }N=2,\\[0.1in]
\ds \frac{1}{(N-2)\sigma_N} r^{2-N} &\quad\mbox{if }N\geq 3,
\end{cases}
\end{equation}
where $\sigma_N$ denotes the surface area of the unit ball in $\R^N$, $N\geq 3$. Then $E(|x|)$ is the fundamental solution of the Laplace operator in $\R^N$, $N\geq 2$, in the sense that 
$$
-\Delta E=\delta_0\quad\mbox{ in } \mathscr{D}'(\R^N).
$$
Also, $u(x)=E(|x|)$ satisfies \eqref{h1} with $f\equiv 0$ and $V(|x|)E(|x|)\in L^1_{loc}(B_1)$ yields \eqref{dini}. We thus have the following result:

\begin{cor}\label{cor0}
Let $V: (0, 1] \mapsto (0, \infty)$ be a continuous function. Then, 
any solution $0\leq u\in \mathcal{C}^2(B_1\setminus\{0\})$ of  $-\Delta u+V(|x|) u\geq 0$ in  $B_1\setminus\{0\}$ satisfies 
$$
u,\; \Delta u,\;  V(|x|)u  \in L^1_{loc}(\B1),
$$
if and only if \eqref{dini} holds.
\end{cor}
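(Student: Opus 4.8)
The plan is to obtain the two implications from two separate sources, so that the corollary essentially formalizes the remark made immediately before it. For the ``if'' direction, suppose \eqref{dini} holds and let $0\le u\in\mathcal{C}^2(B_1\setminus\{0\})$ solve $-\Delta u+V(|x|)u\ge 0$ in $B_1\setminus\{0\}$. Since $f\equiv 0$ trivially lies in $\mathcal{C}(B_1\setminus\{0\})\cap L^1_{loc}(B_1)$ and $V$ satisfies the Dini-type hypothesis \eqref{dini}, Theorem \ref{thm1} applies verbatim and yields $u,\Delta u,V(|x|)u\in L^1_{loc}(B_1)$, which is exactly \eqref{cond}. Nothing further is needed for this direction.

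For the ``only if'' direction I would argue by contraposition: assuming \eqref{dini} fails, I must exhibit a single solution of $-\Delta u+V(|x|)u\ge 0$ in $B_1\setminus\{0\}$ for which \eqref{cond} breaks down. The natural candidate, as flagged in the text, is $u(x)=E(|x|)$, with $E$ as in \eqref{E}. It belongs to $\mathcal{C}^2(B_1\setminus\{0\})$ and is positive on $B_1\setminus\{0\}$ (when $N=2$ because $\log(e/r)>0$ for $0<r<1$), and since $E(|x|)$ is harmonic in $B_1\setminus\{0\}$ we have there, pointwise,
\[
-\Delta u+V(|x|)u=V(|x|)E(|x|)\ge 0,
\]
so $u$ indeed solves the inequality in \eqref{h1} with $f\equiv 0$. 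Moreover $u=E(|x|)\in L^1_{loc}(B_1)$ and $\Delta u=0\in L^1_{loc}(B_1)$ automatically, so the only part of \eqref{cond} that can possibly fail for this $u$ is the membership $V(|x|)u\in L^1_{loc}(B_1)$.

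It then remains to verify that $V(|x|)E(|x|)\notin L^1_{loc}(B_1)$ precisely when \eqref{dini} fails. Passing to polar coordinates, for $0<r<1$,
\[
\int_{B_r}V(|x|)E(|x|)\,dx=
\begin{cases}
\ds \int_0^r s\,\log\frac{e}{s}\,V(s)\,ds & \text{if }N=2,\\[0.14in]
\ds \frac{1}{N-2}\int_0^r s\,V(s)\,ds & \text{if }N\ge 3.
\end{cases}
\]
For $N\ge 3$ the right-hand side is finite if and only if $\int_0^1 sV(s)\,ds<\infty$. For $N=2$, since $V$ is continuous (hence bounded) near $s=1$, only the behaviour near $s=0$ is relevant, and there $\log(e/s)=1+\log(1/s)\simeq\log(1/s)$; hence the right-hand side is finite if and only if $\int_0^1 s\log(1/s)V(s)\,ds<\infty$. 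In both cases this is exactly condition \eqref{dini}. Thus, if \eqref{dini} fails, then $V(|x|)u\notin L^1_{loc}(B_1)$ for $u=E(|x|)$, so \eqref{cond} cannot hold for every solution, which is what we had to show. The argument is short and I do not expect a genuine obstacle; the only mildly delicate point is the reduction of the two-dimensional integral to \eqref{dini}, i.e. the comparison $\log(e/s)\simeq\log(1/s)$ near the origin together with the harmless contribution of $V$ at the endpoint $s=1$.
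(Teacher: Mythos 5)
Your proposal is correct and follows exactly the paper's argument: the ``if'' direction is Theorem \ref{thm1} with $f\equiv 0$, and the ``only if'' direction uses $u(x)=E(|x|)$, for which $V(|x|)E(|x|)\in L^1_{loc}(B_1)$ is equivalent to \eqref{dini}. Your handling of the $N=2$ comparison $\log\frac{e}{s}\simeq\log\frac{1}{s}$ near $s=0$ (with the endpoint $s=1$ controlled by continuity of $V$) is the right justification and fills in the one detail the paper leaves implicit.
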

\noindent To illustrate the usefulness of Theorem \ref{thm1}, let us consider the inequality 
\begin{equation}\label{usef}
-\Delta u+\frac{\log^\tau \frac{e}{|x|} }{|x|^\gamma} u \geq 0 \quad\mbox{ in }\; B_1\setminus\{0\}.    
\end{equation}
Using Theorem \ref{thm1} we obtain:
\begin{cor}\label{cor1}
Let $\gamma, \tau\in \R$. Then, any solution $0\leq u\in \mathcal{C}^2(B_1\setminus\{0\})$  
 of \eqref{usef}
satisfies 
$$
u,\; \Delta u,\;   \frac{\log^\tau \frac{e}{|x|} }{|x|^\gamma} u \in L^1_{loc}(\B1),
$$
if and only if one of the following conditions holds: 
    \begin{enumerate}[(i)]
        \item $N \geq 2$ and $\gamma < 2$; 
        \item $N = 2$, $\gamma = 2$ and $\tau < -2$;
        \item $N \geq 3$, $\gamma = 2$ and $\tau < -1$.
    \end{enumerate}
    \end{cor}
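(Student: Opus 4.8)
The plan is to reduce Corollary \ref{cor1} to a direct application of Corollary \ref{cor0}, so the entire argument amounts to checking when the potential $V(s) = s^{-\gamma}\log^\tau \frac{e}{s}$ satisfies the Dini-type integrability condition \eqref{dini}. First I would observe that $V$ is continuous and positive on $(0,1]$ for every $\gamma, \tau \in \R$ (since $\log\frac{e}{s} \geq 1$ there), so Corollary \ref{cor0} applies verbatim: the conclusion $u, \Delta u, V(|x|)u \in L^1_{loc}(B_1)$ holds if and only if \eqref{dini} holds for this specific $V$. Thus the corollary is purely a computation of convergence of an explicit integral.

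Next I would split into the two dimensional regimes. For $N \geq 3$, condition \eqref{dini} reads $\int_0^1 s^{1-\gamma}\log^\tau\frac{e}{s}\, ds < \infty$. The integrand is problematic only near $s = 0$. If $\gamma < 2$ then $1 - \gamma > -1$ and the power $s^{1-\gamma}$ is integrable near $0$; the logarithmic factor $\log^\tau\frac{e}{s}$ only grows (or decays) polynomially in $\log\frac1s$ and cannot destroy convergence, so the integral is finite for every $\tau$. If $\gamma > 2$ the integral diverges for every $\tau$. In the borderline case $\gamma = 2$ the integral becomes $\int_0^1 s^{-1}\log^\tau\frac{e}{s}\, ds$; substituting $t = \log\frac{e}{s}$ (so $dt = -ds/s$, and $t$ ranges over $[1,\infty)$) turns this into $\int_1^\infty t^\tau\, dt$, which converges if and only if $\tau < -1$. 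This yields exactly cases (i) with $N \geq 3$ and (iii). For $N = 2$, condition \eqref{dini} reads $\int_0^1 s^{1-\gamma}\log^{\tau+1}\frac{e}{s}\, ds < \infty$, i.e.\ the same analysis with $\tau$ replaced by $\tau + 1$: finite for all $\tau$ when $\gamma < 2$, divergent for all $\tau$ when $\gamma > 2$, and in the borderline case $\gamma = 2$ the substitution $t = \log\frac{e}{s}$ gives $\int_1^\infty t^{\tau+1}\, dt$, convergent iff $\tau + 1 < -1$, i.e.\ $\tau < -2$. This produces cases (i) with $N = 2$ and (ii).

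The one mild technical point to address carefully, rather than a genuine obstacle, is the mismatch between the logarithm $\log\frac{e}{|x|}$ appearing in \eqref{usef} and $\log\frac1s$ appearing in \eqref{dini}: I would note that on $(0,1)$ one has $\log\frac1s \leq \log\frac{e}{s} = 1 + \log\frac1s$, so the two are comparable up to the additive constant $1$, and hence $s\log\frac1s V(s)$ and $s\log\frac{e}{s} V(s)$ are comparable near $s=0$; replacing one by the other does not change the convergence of the integral in \eqref{dini}. (Equivalently, the substitution $t = \log\frac{e}{s}$ handles this automatically.) With this remark in place, the three listed conditions (i)–(iii) are precisely the union, over $N = 2$ and $N \geq 3$, of the cases where the relevant integral converges, and the proof is complete by invoking Corollary \ref{cor0}. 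I do not expect any part of this to be difficult; the content is entirely in the bookkeeping of the borderline exponent $\gamma = 2$ and the shift $\tau \mapsto \tau + 1$ between dimensions.
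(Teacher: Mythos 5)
Your proposal is correct and follows essentially the same route as the paper: reduce to Corollary \ref{cor0}, observe that the integrals in \eqref{dini} converge for all $\tau$ when $\gamma<2$ and diverge when $\gamma>2$, and handle the borderline $\gamma=2$ via the substitution $t=\log\frac{e}{s}$, yielding $\tau<-2$ for $N=2$ and $\tau<-1$ for $N\geq 3$. Your extra remark on the comparability of $\log\frac{1}{s}$ and $\log\frac{e}{s}$ near $s=0$ is a harmless refinement of the same computation.
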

\noindent
The second goal of this article is concerned with the study of non-negative solutions to the following elliptic equation with convolution term:
\begin{equation}\label{eq0}
 -\Delta u +  \lambda V(x) u = (\K1  * u^p)u^q\quad \text{in } B_1\setminus\{0\}, 
\end{equation}
where $0< V\in \mathcal{C}^{0, \nu}( \overline B_1\setminus\{0\})$, $0<\nu<1$, is not necessarily radial, $\lambda, p, q>0$ and
\begin{equation}\label{ptt}
\K1(x) = \abs{x}^{-\alpha}\log^{\beta}\frac{2e}{\abs{x}}, \quad\text{where } 0 \leq \alpha < N,\, \beta \in \R.
\end{equation} 
We note that $\K1(x) \in L^1(B_1)$. The quantity $\K1 * u^p$ represents the convolution operation in $B_1$ given by
\[
(\K1 \ast u^p)(x)=\int_{B_1} \frac{\log^{\beta}\frac{2e}{\abs{x - y}}}{\abs{x-y}^{\alpha}} u^p(y) dy, \quad \text{for all } x \in B_1\setminus\{0\}.
\]
We say that a function $u \in \mathcal{C}^2(B_1 \setminus \{0\})$ is a solution to \eqref{eq0} if
\begin{equation}\label{KKK3}
u \geq 0\;, \;\; (\K1 * u^p)(x)<\infty \quad\mbox{ for all }x\in B_1\setminus\{0\},  
\end{equation}
and $u$ satisfies \eqref{eq0} pointwise in $B_1 \setminus \{0\}$. It is easy to see  that \eqref{KKK3} implies $u^p\in L^1(B_1)$. \\
If $\beta=0$, then $K_{\alpha, 0}$ becomes the standard Riesz potential of order $N-\alpha$ and the inequality $-\Delta u+\lambda V(x) u\geq (K_{\alpha, 0}*u^p)u^q$ was studied in the exterior domains of $\R^N$ in \cite{GKS20, GKS21, MV13}. Equation \eqref{eq0} with $\beta=0$ is often linked with the so-called Choquard equation arising in quantum physics models (see \cite{MV17} for an account on this topic). When $q=p-1$, the equation \eqref{eq0} has a variational structure and its energy functional is given by 
$$
I(u)=\frac{1}{2}\int_{B_1}\left(|\nabla u|^2+\lambda V(x)u^2\right)-\frac{1}{2p}\int_{B_1}\int_{B_1} K_{\alpha, \beta}(x-y) u^p(x)u^p(y) dxdy.
$$
There are many recent studies of the above and related functional for $\beta=0$; we will not proceed in this direction and will investigate \eqref{eq0} for general and independent exponents $p, q>0$.\medskip\\
Our first result concerning the non-negative solutions of \eqref{eq0} reads as follows:
\begin{thm}\label{thm2}
Suppose $0\leq u\in \mathcal{C}^2(B_1\setminus\{0\})$ is a solution of \eqref{eq0} such that $Vu\in L^1_{loc}(B_1)$. Then $(\K1 * u^p)u^q\in L^1_{loc}(B_1)$ and there exists $m\geq 0$ such that $u$ satisfies
\begin{equation}\label{eq1d}
-\Delta u +  \lambda V(x) u = (\K1  * u^p)u^q+m\delta_0 \quad \text{in } \mathscr{D}'(\B1).
\end{equation}
\end{thm}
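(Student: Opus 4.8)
The plan is to reduce the equation \eqref{eq0} to a differential inequality covered by Theorem \ref{thm1} and then simply read off the conclusion. First I would record that, by the very definition \eqref{KKK3} of a solution, the function $h_0 := (\K1 * u^p)u^q$ is nonnegative and continuous on $B_1\setminus\{0\}$, and that \eqref{eq0} holds there pointwise, so $-\Delta u = h_0 - \lambda V(x)u$ in $B_1\setminus\{0\}$. Since $u\geq 0$ and $h_0\geq 0$, we get the pointwise inequality
$$-\Delta u + u = h_0 + u - \lambda V(x)u \;\geq\; -\lambda V(x)u \;=:\; f \qquad\text{in } B_1\setminus\{0\}.$$
Here $f=-\lambda Vu$ is continuous on $B_1\setminus\{0\}$ (product of the continuous functions $V\in\mathcal{C}^{0,\nu}(\overline B_1\setminus\{0\})$ and $u\in\mathcal{C}^2(B_1\setminus\{0\})$) and lies in $L^1_{loc}(B_1)$ precisely because of the standing hypothesis $Vu\in L^1_{loc}(B_1)$. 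Moreover the constant potential $V(|x|)\equiv 1$ trivially satisfies the Dini-type condition \eqref{dini} for every $N\geq 2$.

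Applying Theorem \ref{thm1} to the inequality $-\Delta u + u\geq f$ (with that constant potential) immediately yields $u,\ \Delta u\in L^1_{loc}(B_1)$, together with a function $h\in L^1_{loc}(B_1)$ and a constant $c\geq 0$ such that $-\Delta u = h + c\,\delta_0$ in $\mathscr{D}'(B_1)$. The integrability of the convolution term then follows from the equation itself: since $h_0 = \lambda V(x)u - \Delta u$ pointwise in $B_1\setminus\{0\}$ and both $Vu$ and $\Delta u$ belong to $L^1_{loc}(B_1)$, we conclude $(\K1 * u^p)u^q = h_0 \in L^1_{loc}(B_1)$, which is the first assertion of the theorem.

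It remains to identify the density $h$. On the open set $B_1\setminus\{0\}$ the distributional Laplacian of $u\in\mathcal{C}^2(B_1\setminus\{0\})$ coincides with the classical one, so $h = -\Delta u = h_0 - \lambda V(x)u$ almost everywhere in $B_1\setminus\{0\}$; as $\{0\}$ is a null set, this identity holds a.e.\ in $B_1$. Substituting into $-\Delta u = h + c\,\delta_0$ and setting $m:=c\geq 0$ gives
$$-\Delta u + \lambda V(x)u = (\K1 * u^p)u^q + m\,\delta_0 \qquad\text{in } \mathscr{D}'(B_1),$$
as claimed. Once Theorem \ref{thm1} is available the argument is short; the only step demanding a little care is this last one, namely justifying that the abstract $L^1_{loc}$ density produced by Theorem \ref{thm1} must agree a.e.\ with the explicit function $h_0-\lambda Vu$ — this rests on the elementary facts that classical and distributional Laplacians coincide on the punctured ball and that two locally integrable functions equal off a set of measure zero coincide in $L^1_{loc}$. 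One should also double-check that the choice of the (harmless) auxiliary term $u$ added on the left is legitimate, i.e.\ that it only needs $h_0,u\geq 0$, which is exactly why no sign restriction on $f$ is required in Theorem \ref{thm1}.
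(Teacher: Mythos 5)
Your argument is correct, and it reaches the conclusion by a genuinely different route from the paper. The paper first proves a nonexistence lemma (Lemma \ref{lm_3}: the inequality $-\Delta u\geq g$ has no nonnegative $\mathcal{C}^2$ solution in the punctured ball when $\int_{B_{1/2}}g=\infty$), uses it to rule out $\int_{B_{1/2}}\big[(\K1*u^p)u^q-\lambda Vu\big]=\infty$ and thereby establishes $g:=(\K1*u^p)u^q-\lambda Vu\in L^1_{loc}(B_1)$ \emph{first}, and only then applies Theorem A with $f=g$ to get \eqref{eq1d}. You instead discard the nonnegative terms $(\K1*u^p)u^q$ and $u$ to obtain $-\Delta u+u\geq -\lambda Vu$, whose right-hand side is already known to be in $\mathcal{C}(B_1\setminus\{0\})\cap L^1_{loc}(B_1)$ by hypothesis, apply Theorem \ref{thm1} (equivalently Theorem A with $a=1$) to get $\Delta u\in L^1_{loc}(B_1)$, and then read off the integrability of the convolution term from the pointwise identity $(\K1*u^p)u^q=\lambda Vu-\Delta u$; the identification of the $L^1_{loc}$ density in $-\Delta u=h+c\delta_0$ via testing against $\varphi\in C_c^\infty(B_1\setminus\{0\})$ is carried out correctly. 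Your version is more economical for this particular theorem, since it bypasses Lemma \ref{lm_3} entirely (the paper needs that lemma anyway for the necessity part of Theorem \ref{thm4}, so nothing is saved globally); the paper's version has the mild advantage of isolating the quantitative reason why the source term must be integrable. Both rest ultimately on the Brezis--Lions result, so there is no circularity in either.
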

\noindent Next, we will be concerned with solutions $0\leq u\in \mathcal{C}^2(B_1\setminus\{0\})$ of \eqref{eq0} that fulfill $u(x)\simeq E(|x|)$.
We first discuss the case $N=2$.
\begin{thm}\label{thm3} Assume $N=2$. 
\begin{enumerate}[(i)]
    \item If $V(x)\log\frac{2e}{|x|}\in L^1_{loc}(B_1)$ and $u\geq 0$ is a solution of \eqref{eq0} such that 
        \begin{equation}\label{eqN2}
        u(x)=o\left(\log\frac{2e}{|x|}\right)\quad\mbox{ as }|x|\to 0,
        \end{equation}
        then $u$ has a $\mathcal{C}^2$-extension at the origin. 
        \item Let $p>0$, $q>1$ and assume that for all $\varepsilon>0$ we have 
        \begin{equation}\label{eqN21}
        V(x)=O\left(\log^\varepsilon \frac{2e}{|x|}\right)\quad\mbox{ as } |x|\to 0.
        \end{equation}
        Then,  there exists $\lambda^*>0$ such that for all $0<\lambda<\lambda^*$, \eqref{eq0} has a positive solution $u(x)\simeq \log\frac{2e}{|x|}$.
    \end{enumerate}
    \end{thm}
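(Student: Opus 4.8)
The two parts call for different mechanisms: a removable-singularity argument for (i), and the method of sub- and supersolutions with explicit logarithmic barriers for (ii).

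\emph{Part (i).} Since $u(x)=o(\log\frac{2e}{|x|})$, there is $\rho\in(0,1)$ with $0\le u(x)\le C\log\frac{2e}{|x|}$ on $B_\rho\setminus\{0\}$, hence $0\le V(x)u(x)\le C\,V(x)\log\frac{2e}{|x|}$ there; as $V$ is also bounded on every compact subset of $B_1\setminus\{0\}$, the hypothesis $V\log\frac{2e}{|\cdot|}\in L^1_{loc}(B_1)$ gives $Vu\in L^1_{loc}(B_1)$. Theorem~\ref{thm2} then applies and yields $m\ge0$ with $(K_{\alpha,\beta}*u^p)u^q\in L^1_{loc}(B_1)$, $u\in L^1_{loc}(B_1)$, and
$$-\Delta u+\lambda V(x)u=(K_{\alpha,\beta}*u^p)u^q+m\delta_0\qquad\text{in }\mathscr{D}'(B_1).$$
To see that $m=0$, put $g:=(K_{\alpha,\beta}*u^p)u^q-\lambda V(x)u\in L^1_{loc}(B_1)$; since $-\Delta u=g$ pointwise on $B_1\setminus\{0\}$, the divergence theorem on the annulus $\{r<|x|<r_0\}$ gives $2\pi r\,\overline{u}'(r)=2\pi r_0\,\overline{u}'(r_0)+\int_{\{r<|x|<r_0\}}g\to -m$ as $r\to0$ (the limit being $-m$ in view of the $\delta_0$ term above). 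If $m>0$, then $\overline{u}'(r)\le-\frac{m}{4\pi r}$ for small $r$, and integrating from $r$ to $r_0$ yields $\overline{u}(r)\ge\text{const}+\frac{m}{4\pi}\log\frac1r$, which contradicts $\overline{u}(r)\le\varepsilon\log\frac{2e}{r}$ (valid near $0$ for every $\varepsilon>0$ by the hypothesis on $u$). Hence $m=0$, so $-\Delta u=g$ in $\mathscr{D}'(B_1)$ with $g\in L^1_{loc}(B_1)$, and a standard elliptic bootstrap — using $V\in\mathcal{C}^{0,\nu}(\overline B_1\setminus\{0\})$ and the Hölder continuity of $K_{\alpha,\beta}*u^p$ — upgrades $u$ to a $\mathcal{C}^2$ function across the origin. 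The delicate steps are the identification $m=0$ and, more so, closing this last bootstrap, which is precisely where the assumption $V\log\frac{2e}{|\cdot|}\in L^1_{loc}(B_1)$ is used.

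\emph{Part (ii).} Write $w_0(x)=\log\frac{2e}{|x|}$, which is positive and harmonic on $B_1\setminus\{0\}$, with $w_0\ge\log 2e>1$ on $B_1$. A convolution estimate (using $0\le\alpha<N=2$, so that $K_{\alpha,\beta}\in L^1(B_1)$ and, for any log-power data, $K_{\alpha,\beta}*w_0^{\,p}$ is bounded) gives $0<c_\sharp\le K_{\alpha,\beta}*w_0^{\,p}\le M<\infty$ on $B_1$, the lower bound being clear since $K_{\alpha,\beta}>0$. Fix $a\in(0,1)$ and recall $-\Delta(w_0^{\,a})=a(1-a)|x|^{-2}w_0^{\,a-2}$. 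I claim that for a suitably small fixed $t>0$ and all $0<\lambda<\lambda^*$ (with $\lambda^*$ to be chosen), $\underline u:=t\,w_0$ and $\overline u:=t\,(w_0+w_0^{\,a})$ are, respectively, a subsolution and a supersolution of \eqref{eq0}, and $\underline u\le\overline u$. For $\overline u$: since $-\Delta(tw_0)=0$ we have $-\Delta\overline u=t\,a(1-a)|x|^{-2}w_0^{\,a-2}$, while $\overline u\le 2t\,w_0$ on $B_1$ gives $(K_{\alpha,\beta}*\overline u^{\,p})\overline u^{\,q}\le(2t)^{p+q}M\,w_0^{\,q}$; because $\sup_{0<|x|<1}|x|^{2}w_0(x)^{\,q+2-a}<\infty$ and $p+q>1$, taking $t$ small forces $-\Delta\overline u\ge(K_{\alpha,\beta}*\overline u^{\,p})\overline u^{\,q}$ on all of $B_1\setminus\{0\}$, hence $-\Delta\overline u+\lambda V\overline u\ge(K_{\alpha,\beta}*\overline u^{\,p})\overline u^{\,q}$ for every $\lambda>0$. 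For $\underline u$: $-\Delta\underline u+\lambda V\underline u=\lambda t\,V w_0$ and $(K_{\alpha,\beta}*\underline u^{\,p})\underline u^{\,q}\ge t^{p+q}c_\sharp\,w_0^{\,q}$, so it suffices that $\lambda V\le t^{\,p+q-1}c_\sharp\,w_0^{\,q-1}$ on $B_1\setminus\{0\}$; near the origin this holds since $q>1$ forces $w_0^{\,q-1}\to\infty$, while $V=O(\log^{\varepsilon}\frac{2e}{|x|})$ for every $\varepsilon>0$ (take $\varepsilon=(q-1)/2$ to get $V\le C\,w_0^{(q-1)/2}$), and away from the origin $w_0^{\,q-1}$ is bounded below, so the inequality holds once $\lambda<\lambda^*$ for a suitable $\lambda^*>0$; finally $\underline u\le\overline u$ is clear. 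One then solves \eqref{eq0} by monotone iteration for the Dirichlet problem on the exhausting annuli $\{2^{-k}<|x|<1-2^{-k}\}$ (freezing the convolution at the previous iterate, extended by $\underline u$ outside the annulus), producing solutions pinched between $\underline u$ and $\overline u$; the uniform bounds together with interior elliptic estimates and uniform control of the convolution term ($u_k^{\,p}\le(2t)^p w_0^{\,p}\in L^1(B_1)$) give compactness in $\mathcal{C}^1_{loc}(B_1\setminus\{0\})$, and a diagonal subsequence converges to a solution $u\in\mathcal{C}^2(B_1\setminus\{0\})$ of \eqref{eq0} with $\underline u\le u\le\overline u$. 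Since both barriers are $\simeq w_0$, this gives $u(x)\simeq\log\frac{2e}{|x|}$.

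\emph{Main obstacle.} In (i), beyond ruling out $m>0$ the genuine work is the final regularity upgrade to $\mathcal{C}^2$. In (ii), the crux is to build a supersolution comparable to $\log\frac{2e}{|x|}$ that still dominates the nonlocal nonlinearity near the origin; the sublogarithmic correction $w_0^{\,a}$ achieves this, its Laplacian carrying a factor $|x|^{-2}$ that beats the merely polylogarithmic right-hand side, so that the (possibly very weak) potential term $\lambda Vu$ is not needed in the supersolution inequality at all — it is only used, via $\lambda$ small, to make the subsolution work.
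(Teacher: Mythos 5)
Your part (ii) is essentially the paper's own proof: the barriers $t\log\frac{2e}{|x|}$ and $t\bigl(\log\frac{2e}{|x|}+\log^{a}\frac{2e}{|x|}\bigr)$ are exactly the pair $\widetilde u,\widetilde U$ used in the text, the smallness of $\lambda$ enters only through the subsolution inequality via \eqref{eqN21} with $\varepsilon=(q-1)/2$, and your annular monotone iteration is a variant of Proposition \ref{ps}. The only things you assert rather than prove there are the two-sided bound $c_\sharp\le K_{\alpha,\beta}*w_0^{\,p}\le M$ (this is Lemma \ref{lm_2}, including the uniformity in the truncation parameter needed for the iteration) and the $\mathcal{C}^2$ regularity of the limit, which again rests on the H\"older continuity of $x\mapsto (K_{\alpha,\beta}*u^p)(x)$ (Lemma \ref{rieszlog}). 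These are quotable facts, so I regard part (ii) as correct. Your identification of $m=0$ in part (i) via the flux identity $2\pi r\,\overline u'(r)\to -m$ is a clean direct substitute for the paper's Lemma \ref{lemd} and is fine.

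The genuine gap is the last step of part (i). Writing $-\Delta u=g$ in $\mathscr{D}'(B_1)$ with $g\in L^1_{loc}$ does not launch any "standard elliptic bootstrap": an $L^1$ right-hand side gives at best $W^{1,s}$ for $s<\frac{N}{N-1}$ and certainly not continuity of $u$ at the origin, and you cannot speak of "the H\"older continuity of $K_{\alpha,\beta}*u^p$" before knowing that $u^p$ lies in some $L^s$ with $s$ large near the origin. The missing chain of ideas, which is where the paper does its real work, is: (a) from $u=o\bigl(\log\frac{2e}{|x|}\bigr)$ and the integral estimate of Lemma \ref{lm_2} one gets the pointwise bound $(K_{\alpha,\beta}*u^p)(x)\,u^q(x)\le C\log^{q}\frac{2e}{|x|}$ near the origin, so $g\in L^s$ there for every $s<\infty$; (b) the representation $u=h+E*g$ together with the local regularity of logarithmic Riesz potentials (Lemma \ref{rieszlog}(ii)) then shows $u\in\mathcal{C}^{0,\mu}$ \emph{up to} the origin, i.e.\ $u$ is in particular bounded; (c) only now is $K_{\alpha,\beta}*u^p$ H\"older near $0$ (again by Lemma \ref{rieszlog}), so the right-hand side is H\"older and a comparison with the Dirichlet solution of $-\Delta w=g$ in $B_{1/2}$ yields the $\mathcal{C}^2$ extension. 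Step (a) is indispensable and is nowhere in your argument; without it the bootstrap cannot start. (A further point you should address, present in the paper's hypotheses but not in your sketch: the term $\lambda Vu$ must also end up H\"older at the origin for the $\mathcal{C}^2$ conclusion, which uses the regularity of $V$ and the boundedness of $u$ obtained in (b).)
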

\noindent Typical examples of potentials $V(x)$ that fulfill \eqref{eqN21} are
$$V(x)=\log^{-\theta}\frac{2e}{|x|}\, , \, \theta>0\, ,\qquad V(x)=a\geq 0\, ,\qquad V(x)=\log\log\frac{2e}{|x|}.$$
In the case $N\geq 3$ our result concerning \eqref{eq0} reads as follows. 
\begin{thm}\label{thm4} Assume $N\geq 3$ and
\begin{equation}\label{eqV}
V(x)=
\begin{cases}
o\left(|x|^{-(q-1)(N-2)}\right) & \mbox{ if }q>1,\\[0.2cm]
O(1) & \mbox{ if }0<q\leq 1,
\end{cases}\quad\mbox{ as }|x|\to 0.
\end{equation}
Then, there exists $\lambda>0$ and a solution $u(x)\simeq |x|^{2-N}$ of \eqref{eq0} if and only if one of the conditions below hold: 
\begin{enumerate}
   \item[\it (i)] either 
       \begin{equation}\label{eqN3}
       \max\{p,q\}<\frac{N}{N-2} \, , \quad  \quad p+q<\frac{2N-\alpha}{N-2} \quad\mbox{ and } \quad \beta\in \R,
       \end{equation}   
   \item[\it (ii)] or
       \begin{equation}\label{eqN4}
       \max\{p,q\}<\frac{N}{N-2}  \, ,\qquad p+q=\frac{2N-\alpha}{N-2} \qquad\mbox{ and } \quad \beta < -1.
       \end{equation}           
\end{enumerate}
\end{thm}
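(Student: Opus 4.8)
\emph{Strategy.} The statement is an equivalence, and the plan is to prove the two implications separately, both resting on one technical lemma describing the singularity of the convolution $\K1\ast f$ near the origin when $f\ge 0$ satisfies $f(y)\simeq|y|^{-a}$ near $0$ (and $f$ is bounded away from $0$), with $0\le a<N$: as $|x|\to 0$,
\[
(\K1\ast f)(x)\simeq
\begin{cases}
1 & \text{if }a+\alpha<N,\\[2pt]
|x|^{N-\alpha-a}\log^{\beta}\tfrac{2e}{|x|} & \text{if }a+\alpha>N,
\end{cases}
\]
with the expected critical behaviour $\log^{\beta+1}\tfrac{2e}{|x|}$ (resp.\ $\log\log\tfrac{2e}{|x|}$, resp.\ bounded) when $a+\alpha=N$ and $\beta>-1$ (resp.\ $\beta=-1$, resp.\ $\beta<-1$). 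I would prove this by splitting $\int_{B_1}\K1(x-y)f(y)\,dy$ over $\{|y|<\tfrac12|x|\}$, $\{|y-x|<\tfrac12|x|\}$ and $\{2|x|<|y|<1\}$ and comparing each piece with an explicit radial integral, pulling out the slowly varying weight $\log^\beta\tfrac{2e}{\cdot}$ at the cost of harmless constants. Applied with $f=u^p$, $a=(N-2)p$ (legitimate once $(N-2)p<N$), this controls both sides of the argument below, and its bookkeeping in the critical cases is the point I expect to be the most delicate.

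\emph{Necessity.} Let $u\simeq|x|^{2-N}$ solve \eqref{eq0}. Since \eqref{KKK3} gives $u^p\in L^1(B_1)$ and $u^p\gtrsim|x|^{-(N-2)p}$, integrability at $0$ forces $p<\tfrac{N}{N-2}$. For $q<\tfrac{N}{N-2}$ I may assume $q>1$ (otherwise it is automatic since $\tfrac{N}{N-2}>1$): then $\K1\ast u^p\ge c_0>0$ near $0$ by the lemma, so the right-hand side of \eqref{eq0} is $\gtrsim|x|^{-(N-2)q}$, while \eqref{eqV} yields $\lambda Vu=o(|x|^{-(N-2)q})$; hence $-\Delta u\ge\tfrac{c_0}{2}|x|^{-(N-2)q}\ge0$ in some $B_{r_0}\setminus\{0\}$. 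Integrating this over annuli $B_r\setminus B_\rho$ and applying the divergence theorem, the spherical average $\overline u$ must satisfy $(\overline u)'(\rho)\gtrsim\rho^{1-N}$ near $0$ whenever $(N-2)q\ge N$; a further integration forces $\overline u(\rho)\to-\infty$, contradicting $u\ge0$. So $q<\tfrac{N}{N-2}$, whence \eqref{eqV} gives $Vu\in L^1_{loc}(B_1)$ and Theorem~\ref{thm2} applies: $(\K1\ast u^p)u^q\in L^1_{loc}(B_1)$. If $p\le\tfrac{N-\alpha}{N-2}$ then $p+q<\tfrac{N-\alpha}{N-2}+\tfrac{N}{N-2}=\tfrac{2N-\alpha}{N-2}$, which is condition (i). If $p>\tfrac{N-\alpha}{N-2}$, the lemma gives $(\K1\ast u^p)u^q\gtrsim|x|^{N-\alpha-(N-2)(p+q)}\log^{\beta}\tfrac{2e}{|x|}$, and local integrability at $0$ then forces either $p+q<\tfrac{2N-\alpha}{N-2}$ (condition (i)) or $p+q=\tfrac{2N-\alpha}{N-2}$ with $\beta<-1$ (condition (ii)); in both cases $\max\{p,q\}<\tfrac{N}{N-2}$.

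\emph{Sufficiency.} Assume (i) or (ii). Since \eqref{eqV} — with $q<\tfrac{N}{N-2}$, hence $(q-1)(N-2)<2$ when $q>1$ — makes $\lambda V$ a mildly singular (Kato-type) potential, for each $\lambda>0$ there is a positive $\widetilde G_\lambda$ solving $-\Delta\widetilde G_\lambda+\lambda V\widetilde G_\lambda=0$ in $B_1\setminus\{0\}$, equal to $1$ on $\partial B_1$, with $\widetilde G_\lambda(x)\simeq|x|^{2-N}$ on $\overline B_1\setminus\{0\}$ (obtained by solving on annuli $B_1\setminus\overline B_{1/k}$ and letting $k\to\infty$). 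I would seek $u=m\widetilde G_\lambda+w$ with $m>0$, $w\ge0$, $w=0$ on $\partial B_1$, so that $w$ solves $-\Delta w+\lambda Vw=(\K1\ast u^p)u^q$ in $B_1\setminus\{0\}$. By the lemma, $(\K1\ast(m\widetilde G_\lambda)^p)(m\widetilde G_\lambda)^q\simeq m^{p+q}g$ with $g(x)\simeq|x|^{-\sigma}\log^{\beta}\tfrac{2e}{|x|}$, where $\sigma:=(N-2)q$ if $p\le\tfrac{N-\alpha}{N-2}$ (up to a logarithmic correction) and $\sigma:=(N-2)(p+q)+\alpha-N$ otherwise; conditions (i)--(ii) are exactly what gives $\sigma<N$, or $\sigma=N$ with $\beta<-1$, hence $g\in L^1(B_1)$, and make the profile $\phi:=(-\Delta)^{-1}_{\mathrm{Dir}}[g]$ — which dominates $(-\Delta+\lambda V)^{-1}_{\mathrm{Dir}}[g]$ — satisfy $\phi=o(|x|^{2-N})$: indeed $\phi\simeq|x|^{2-\sigma}\log^{\beta}\tfrac{2e}{|x|}$ (or $\log^{\beta+1}$, or bounded) when $\sigma<N$, and $\phi\simeq|x|^{2-N}(\log\tfrac{2e}{|x|})^{\beta+1}$ when $\sigma=N$, $\beta<-1$. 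Set $\Psi:=Cm^{p+q}\phi$; choosing $C$ large and $m$ small (when $p+q>1$), respectively $m$ large (when $p+q<1$), one gets $m\widetilde G_\lambda+\Psi\le 2m\widetilde G_\lambda$, hence — using the monotonicity of the nonlinearity — $(\K1\ast(m\widetilde G_\lambda+\Psi)^p)(m\widetilde G_\lambda+\Psi)^q\le -\Delta\Psi\le -\Delta\Psi+\lambda V\Psi$. Then the monotone iteration $w_0=0$, $w_{n+1}=(-\Delta+\lambda V)^{-1}_{\mathrm{Dir}}\big[(\K1\ast(m\widetilde G_\lambda+w_n)^p)(m\widetilde G_\lambda+w_n)^q\big]$ satisfies $0=w_0\le w_1\le\cdots\le\Psi$ (maximum principle and monotonicity), so $w_n\uparrow w$ solving the fixed-point equation, and $u=m\widetilde G_\lambda+w$ solves \eqref{eq0} in $B_1\setminus\{0\}$ with $m\widetilde G_\lambda\le u\le m\widetilde G_\lambda+\Psi$; therefore $u\simeq|x|^{2-N}$.

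\emph{Main obstacle.} Beyond the convolution lemma, two places need genuine care. First, the borderline condition (ii), where $g\simeq|x|^{-N}\log^{\beta}\tfrac{2e}{|x|}$ with $\beta<-1$: one must verify that the corresponding profile is still $o(|x|^{2-N})$, which holds because $\beta+1<0$ makes $(\log\tfrac{2e}{|x|})^{\beta+1}\to0$. Second, the scale-invariant case $p+q=1$, where neither smallness nor largeness of $m$ closes the supersolution inequality; there one instead chooses $\lambda$ large, exploiting $V>0$ throughout $B_1$ to make the Dirichlet solution operator a contraction on the weighted cone $\{0\le w\le\kappa\,\widetilde G_\lambda\}$. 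The remaining ingredients — existence and two-sided bounds for $\widetilde G_\lambda$ under \eqref{eqV}, and the Newtonian-potential estimates yielding $\phi$ — are routine.
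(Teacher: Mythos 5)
Your proposal is correct in substance and follows the same two-pronged architecture as the paper: the convolution lemma you posit is exactly Lemma \ref{lm_1}(iii) (with the same five-way case split and the same annular decomposition), your necessity argument reproduces the paper's combination of that lemma with the non-integrability contradiction of Lemma \ref{lm_3} (you route part of it through Theorem \ref{thm2}, which is an equivalent shortcut), and your sufficiency argument is a monotone iteration between ordered barriers, as in Proposition \ref{ps}. The genuine difference is in the barriers. The paper takes $\widetilde u=m|x|^{2-N}$ (or $G_\mu$ for a \emph{constant} $\mu=\lambda V_0$ when $q\le1$) and the explicit algebraic supersolution $\widetilde U=m(|x|^{2-N}+|x|^{-k})$, resp.\ $m(|x|^{2-N}+|x|^{2-N}\log^{-\sigma}\frac{2e}{|x|})$ in the borderline case \eqref{eqN4}, so that $-\Delta\widetilde U$ is computed by hand and dominates $cm^{p+q}\Psi$; the potential term $\lambda V\widetilde u$ is then absorbed into the nonlinearity by taking $\lambda$ small, which is all the theorem requires. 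You instead build the supersolution as $m\widetilde G_\lambda+Cm^{p+q}\phi$, where $\phi$ is the Newtonian potential of the nonlinear profile $g$ — a more intrinsic choice that would give existence for \emph{every} $\lambda$ (closer to Corollary \ref{cor2}) — but it forces you to establish the existence and two-sided bound $\widetilde G_\lambda\simeq|x|^{2-N}$ for a non-radial potential $V=o(|x|^{-\gamma})$, $\gamma<2$. You label this routine; it is true (a Kato-class perturbation argument, or an annulus exhaustion plus Harnack/comparison), but it is a real lemma the paper deliberately avoids, and together with the well-posedness of your iteration on the punctured ball with $L^1$ data (which the paper handles via the annuli $B_1\setminus B_{1/k}$ and the H\"older estimates of Lemma \ref{rieszlog}) it is where most of the unwritten work in your plan lives. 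Your identification of the two delicate points — the $\log^{\beta+1}$ bookkeeping when $p+q=\frac{2N-\alpha}{N-2}$, $\beta<-1$, and the scale-invariant case $p+q=1$ resolved by taking $\lambda$ large — matches the paper exactly.
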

\noindent We point out that if $0\leq u\in \mathcal{C}^2(B_1\setminus\{0\})$ is a solution of \eqref{eq0} such that $u(x)\simeq |x|^{2-N}$, then \eqref{eqN3}, \eqref{eqN4} and \eqref{eqV} imply $Vu\in L^1_{loc}(B_1)$. By Theorem \ref{thm2}, $u$ satisfies \eqref{eq1d}.

\noindent In relation to our Theorem \ref{thm3} above, it is worth mentioning that the problem 
\begin{equation}\label{hc1}
\left\{\begin{aligned}
&-\Delta u+u=(|x|^{-\alpha}*u^p)u^q\\[0.2cm]
&\lim\limits_{|x|\to \infty} u(x)=0
\end{aligned}\right.\quad\mbox{ in }\R^N\setminus\{0\} , N\geq 3,
\end{equation}
was studied in \cite{CZ16} (for $p>0$ and $q\geq 1$) and  in \cite{CZ17} (for $p>0$ and $0<q<1$). It is obtained in \cite[Theorem 1.1]{CZ17} that if
\begin{equation}\label{eqhc}
\frac{\alpha}{N}p+q<1 \quad\mbox{ and }\quad p+q<\frac{2N-\alpha-2}{N-2},
\end{equation}
then \eqref{hc1} has no solutions. 
In contrast to this fact, Theorem \ref{thm4} above allows us to deduce the existence of a solution to the local problem \eqref{eq0} for all ranges of exponents $p, q>0$. Precisely, we have:

\begin{cor}\label{cor2}
Assume $N\geq 3$ and $p$, $q$ satisfy either \eqref{eqN3} or \eqref{eqN4}. Then, for all $\lambda>0$ if $p+q\neq 1$ (resp. for all $\lambda>0$ large, if $p+q=1$) the problem 
\begin{equation}\label{lala}
\begin{cases}
-\Delta u +  \lambda  u = (\K1  * u^p)u^q  \quad \text{in } B_1\setminus\{0\}  ,\\[0.2cm]
\displaystyle \lim_{|x|\to 0} \frac{u(x)}{|x|^{2-N}}>0,
\end{cases}
\end{equation}
has a positive solution $u\in \mathcal{C}^2(B_1\setminus\{0\})$.
\end{cor}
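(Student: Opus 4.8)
The plan is to deduce the corollary from Theorem \ref{thm4} applied to the constant potential $V\equiv1$, and then, when $p+q\ne1$, to normalize the auxiliary parameter $\lambda$ to an arbitrary positive value.

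First I would record the trivial but necessary fact that $V\equiv1$ meets the hypotheses of Theorem \ref{thm4}: it lies in $\mathcal{C}^{0,\nu}(\overline B_1\setminus\{0\})$, and it satisfies \eqref{eqV} — it is $O(1)$ when $0<q\le1$, while for $q>1$ one has $(q-1)(N-2)>0$, so $1=o(|x|^{-(q-1)(N-2)})$ as $|x|\to0$. Hence, under \eqref{eqN3} or \eqref{eqN4}, Theorem \ref{thm4} already provides some $\lambda_0>0$ together with a solution $0\le u_0\in\mathcal{C}^2(B_1\setminus\{0\})$ of $-\Delta u_0+\lambda_0 u_0=(\K1*u_0^p)u_0^q$ in $B_1\setminus\{0\}$ with $u_0(x)\simeq|x|^{2-N}$; in particular $\lim_{|x|\to0}u_0(x)/|x|^{2-N}>0$, so \eqref{lala} is solvable at least for $\lambda=\lambda_0$.

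The substantive step is to pass from this single $\lambda_0$ to every $\lambda>0$ when $p+q\ne1$. Here I would revisit the sub-/super-solution scheme behind the existence half of Theorem \ref{thm4}, keeping $\lambda$ free: one builds an ordered couple $\underline u\le\overline u$ in $\mathcal{C}^2(B_1\setminus\{0\})$ with $\underline u,\overline u\simeq|x|^{2-N}$, sharing Dirichlet data on $\partial B_1$, and satisfying $-\Delta\overline u+\lambda\overline u\ge(\K1*\overline u^{\,p})\overline u^{\,q}$ and $-\Delta\underline u+\lambda\underline u\le(\K1*\underline u^{\,p})\underline u^{\,q}$ in $B_1\setminus\{0\}$; since $t\mapsto(\K1*t^{p})t^{q}$ is monotone and $\K1\ge0$, the standard monotone iteration yields a solution $u$ of \eqref{eq0} squeezed between $\underline u$ and $\overline u$, hence with $u\simeq|x|^{2-N}$. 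For $V\equiv1$ both differential inequalities are, up to a power of the amplitude, invariant under $u\mapsto su$: the local part $-\Delta u+\lambda u$ scales linearly in $s$, while the nonlocal part $(\K1*u^{p})u^{q}$ scales like $s^{p+q}$. Since $p+q\ne1$, the amplitude of the trial functions can be chosen (large or small according to the sign of $p+q-1$) to satisfy both inequalities for \emph{any} prescribed $\lambda>0$, the near-origin analysis already performed under \eqref{eqN3}--\eqref{eqN4} being untouched. This amplitude freedom is exactly what is unavailable for the genuinely variable potentials allowed by \eqref{eqV}, and it is what upgrades ``some $\lambda$'' to ``all $\lambda$''.

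When $p+q=1$, the map $u\mapsto su$ becomes an exact symmetry of \eqref{eq0} with $V\equiv1$, so the amplitude carries no information and solvability of \eqref{lala} reduces to a spectral-type threshold on $\lambda$; this is precisely the regime in which the construction of Theorem \ref{thm4} requires $\lambda$ large (the dominant linear term $\lambda\overline u$ near the origin still produces a supersolution, and the far-field matching with $\underline u$ is unaffected), which gives solvability for all sufficiently large $\lambda$. The one genuine difficulty throughout is the bookkeeping for the nonlocal term: were the right-hand side a pure power, one could simply dilate $B_1$ and read off the statement, but $(\K1*u^{p})u^{q}$ does not transform cleanly under dilations (a dilation displaces the convolution domain and alters the logarithmic weight in $\K1$), so the reduction to arbitrary $\lambda$ must be carried out through the differential inequalities on the fixed ball $B_1$.
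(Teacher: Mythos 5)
Your overall strategy (sub- and super-solutions with constant potential, exploiting the mismatch between the linear scaling of $-\Delta u+\lambda u$ and the $s^{p+q}$ scaling of the nonlocal term) is the right one, but the central step as you state it does not close. The scaling $u\mapsto su$ pushes the two differential inequalities in \emph{opposite} directions: the super-solution inequality $sA\ge s^{p+q}B$ wants $s$ small when $p+q>1$ (large when $p+q<1$), while the sub-solution inequality $sA\le s^{p+q}B$ wants exactly the reverse. Since the pair must also stay ordered, $\underline u\le\overline u$, you cannot fix both inequalities for an arbitrary prescribed $\lambda$ by amplitude alone. Concretely, if you take the natural sub-solution $\underline u=m|x|^{2-N}$ (harmonic), then near $\partial B_1$ the requirement $-\Delta\underline u+\lambda\underline u\le(\K1*\underline u^p)\underline u^q$ reads $\lambda m\,C_1\le m^{p+q}C_2$, forcing $\lambda\lesssim m^{p+q-1}$; for $p+q>1$ and $\lambda$ large this demands $m$ large, which is incompatible with the smallness of the super-solution amplitude. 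This is precisely why Theorem \ref{thm4} only yields \emph{some} $\lambda>0$, and your proposal does not explain what breaks this obstruction when $V\equiv1$.

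The missing idea is the paper's choice of sub-solution $\widetilde u=MG_\lambda$, where $G_\lambda$ is the fundamental solution of $-\Delta+\lambda I$ from Section 3. Then $-\Delta\widetilde u+\lambda\widetilde u=0$ in $B_1\setminus\{0\}$ identically, so the sub-solution inequality becomes $0\le(\K1*\widetilde u^p)\widetilde u^q$ and holds for \emph{every} $M>0$ and \emph{every} $\lambda>0$; by Lemma \ref{fs}, $G_\lambda(x)\simeq|x|^{2-N}$, so the ordering $\widetilde u\le\widetilde U=M(G_\lambda+|x|^{-k})$ (or the logarithmic variant under \eqref{eqN4}) is automatic and the singular behavior in \eqref{lala} is preserved. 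Only the super-solution then constrains $M$ (small if $p+q>1$, large if $p+q<1$), and Proposition \ref{ps} applies for all $\lambda>0$. Two smaller points: the ``standard monotone iteration'' you invoke must be the truncated scheme of Proposition \ref{ps} (the convolution is taken over the full punctured ball, so the ordinary local iteration does not apply directly); and in the case $p+q=1$ the large-$\lambda$ requirement arises \emph{away} from the origin (where one needs $\lambda\widetilde U\ge C\Psi$), not near it — near the origin the inequality is supplied by $-\Delta\widetilde U\simeq|x|^{-k-2}$, which dominates $\Psi(x)$ there by the choice \eqref{eqkk}.
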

\noindent The approach in \cite{CZ16, CZ17} relies on an iteration process combined with the properties of the fundamental solution of $-\Delta+I$. The proof of Theorem \ref{thm3} and Theorem \ref{thm4} above employs new tools such as:
\begin{itemize}
\item  sharp integral estimates with kernel $K_{\alpha, \beta}$ (see Lemma \ref{lm_1} and \ref{lm_2} below);
\item a new sub and super-solution method (see Proposition \ref{ps}) which is different from those adopted in \cite{BC25, GMK17} that fits the non-local setting of the problem \eqref{eq0}. The $\mathcal{C}^2$-regularity of the limit solution is obtained through local estimates on Riesz potentials which we prove in Lemma \ref{rieszlog}. 
\end{itemize}    
\medskip
The remainder of this article is organized as follows. In Section 2 we provide various integral estimates that extend those obtained in \cite{FG20, G23, GKS21, GMT11, GT15, GT16, GY23}. In Section 3 we provide a property of the fundamental solution to $-\Delta+\mu I$, $\mu>0$. Section 4 is devoted to the local regularity estimates for Riesz potentials. Lemma \ref{rieszlog} extends the global estimates of the standard Riesz potentials discussed in \cite[Theorem 2.1, page 153]{M96}. Sections 5 and 6 are devoted to the proofs of Theorem \ref{thm1} and Theorem \ref{thm2} respectively. In Section 7 we devise a non-local version of the sub and super-solution method. This tool will be used in Sections 8 and 9 where we prove Theorem \ref{thm3} and Theorem \ref{thm4}.

\section{Integral estimates} 
\begin{lemma}\label{lm_1}
     Let $0 \leq \alpha < N$, $\beta \in \R$, $0\leq \rho\leq \frac{1}{3}$ and $\gamma \geq 0$. Define:
     \begin{equation}
         \Ix = \int_{\rho<|y|<1}\frac{\log^{\beta}\frac{2e}{|x-y|}}{|x-y|^{\alpha}|y|^{\gamma}}dy \, , \quad\mbox{ for all }x\in B_1\setminus B_\rho.
     \end{equation}
     \begin{enumerate}[(i)]
         \item If $\gamma \geq N$, then $\Ix = \infty$ for all $x \in B_1 \setminus \{0\}$.
         \item If $0 \leq \gamma < N$, then there exists a constant $C>0$ independent of $\rho$ such that  for all $x \in B_1\setminus B_\rho$, we have:
        \begin{equation}\label{ro1}
             \Ix \geq C \left\{
        \begin{aligned}
            &|x|^{N-\alpha-\gamma}\log^{\beta}\frac{2e}{|x|}\quad&\text{if }\alpha + \gamma > N&,\\[0.2cm]
            &\log^{1+\beta}\frac{2e}{|x|}\quad&\text{if }\alpha + \gamma = N&,\, \beta > -1,\\[0.2cm]
            &\log\left(\log\frac{2e}{|x|}\right)\quad&\text{if }\alpha + \gamma = N&,\, \beta = -1,\\[0.2cm]
            &1 \quad&\text{if }\alpha + \gamma = N&,\, \beta < -1,\\[0.2cm]
            &1 \quad&\text{if }\alpha + \gamma < N&.
        \end{aligned}\right.
        \end{equation}
         \item If $0 \leq \gamma < N$, then 
         \begin{equation}\label{lm1}
             I_{\alpha, \beta, \gamma}(0,x) \simeq \left\{
        \begin{aligned}
            &|x|^{N-\alpha-\gamma}\log^{\beta}\frac{2e}{|x|}\quad&\text{if }\alpha + \gamma > N&,\\[0.2cm]
            &\log^{1+\beta}\frac{2e}{|x|}\quad&\text{if }\alpha + \gamma = N&,\, \beta > -1,\\[0.2cm]
            &\log\left(\log\frac{2e}{|x|}\right)\quad&\text{if }\alpha + \gamma = N&,\, \beta = -1,\\[0.2cm]
            &1 \quad&\text{if }\alpha + \gamma = N&,\, \beta < -1,\\[0.2cm]
            &1 \quad&\text{if }\alpha + \gamma < N&.
        \end{aligned}\right.
         \end{equation}
     \end{enumerate}
\end{lemma}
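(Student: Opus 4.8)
The plan is to estimate $\Ix$ by splitting the annulus $\{\rho<|y|<1\}$ into regions where either $|y|$ or $|x-y|$ controls the integrand. First I would dispose of part (i): when $\gamma\ge N$, the factor $|y|^{-\gamma}$ is non-integrable near the origin, and since the remaining factor $\log^\beta\frac{2e}{|x-y|}/|x-y|^\alpha$ is bounded below by a positive constant on a fixed neighborhood of $0$ (for $|x|$ away from $0$, and in general on a small ball around $0$ contained in the annulus when $\rho<|x|/2$), the integral diverges. One has to be slightly careful that $x\notin B_\rho$, but the ball $B_{\rho}(0)$ is excluded anyway; the divergence comes from the shell $\rho<|y|<2\rho$ as $\rho\to 0$ is not the point — rather, for fixed $x$ the integral over $\{\rho<|y|<|x|/2\}$ already behaves like $\int_\rho^{|x|/2} r^{N-1-\gamma}\,dr=\infty$ since $\rho$ can be taken $0$ in the definition domain; more precisely the statement is for each fixed $x\in B_1\setminus\{0\}$ and the worst case $\rho$ small, giving $+\infty$.

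For part (ii), the lower bound, I would restrict the integral to a well-chosen subregion. Set $r=|x|$. When $\alpha+\gamma>N$, I localize near $y=x$: on the ball $B_{r/4}(x)$ we have $|y|\simeq r$, so $\Ix\ge C r^{-\gamma}\int_{B_{r/4}(x)}\frac{\log^\beta\frac{2e}{|x-y|}}{|x-y|^\alpha}\,dy\simeq r^{-\gamma}\cdot r^{N-\alpha}\log^\beta\frac{2e}{r}$, using polar coordinates and the fact that $\log\frac{2e}{t}\simeq\log\frac{2e}{r}$ for $t\in(0,r/4)$ — here one checks $\int_0^{r/4}t^{N-1-\alpha}\log^\beta\frac{2e}{t}\,dt\simeq r^{N-\alpha}\log^\beta\frac{2e}{r}$ by the substitution $t=rs$ and boundedness of $\log^\beta\frac{2e}{rs}/\log^\beta\frac{2e}{r}$ on $s\in(0,1/4)$. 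This needs $N-\alpha>0$, which holds, but also that $B_{r/4}(x)\subset\{\rho<|y|<1\}$; since $r=|x|\le 1$ and $x\notin B_\rho$, one gets $B_{r/4}(x)\subset B_{5r/4}\setminus B_{3r/4}$, and $3r/4\ge 3\rho/4$... this containment requires $\rho\le 3r/4$, i.e. $x$ not too close to the inner sphere; the borderline case $|x|\simeq\rho$ must be handled separately, e.g. by instead integrating over a region where $|x-y|\simeq|y|\simeq r$ away from both spheres, which is where I expect the main bookkeeping obstacle. When $\alpha+\gamma\le N$, I localize near the origin instead: on $\{2\rho<|y|<1/4\}$ (or a fixed annulus if $|x|$ is bounded below) we have $|x-y|\le|x|+|y|\le C$, and if additionally $|x|$ is small we can use $|x-y|\simeq|y|$ on $\{2|x|<|y|<1/4\}$, reducing to $\int_{2|x|}^{1/4}t^{N-1-\alpha-\gamma}\log^\beta\frac{2e}{t}\,dt$, whose asymptotics as $|x|\to 0$ give exactly the four stated cases ($\log^{1+\beta}$, $\log\log$, or $O(1)$) by elementary integration of $t^{-1}\log^\beta\frac{2e}{t}$.

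For part (iii), with $\rho=0$ the matching upper bound $I_{\alpha,\beta,\gamma}(0,x)\le C(\cdots)$ is what upgrades $\gtrsim$ to $\simeq$. The standard device is to split $B_1=\{|y|<|x|/2\}\cup\{|x-y|<|x|/2\}\cup\{|y|>|x|/2,\ |x-y|>|x|/2\}$. On the first piece $|x-y|\simeq|x|$, giving $\simeq|x|^{-\alpha}\log^\beta\frac{2e}{|x|}\int_{|y|<|x|/2}|y|^{-\gamma}\,dy\simeq|x|^{N-\alpha-\gamma}\log^\beta\frac{2e}{|x|}$ (using $\gamma<N$); on the second piece $|y|\simeq|x|$, giving $|x|^{-\gamma}\int_{|x-y|<|x|/2}\frac{\log^\beta\frac{2e}{|x-y|}}{|x-y|^\alpha}\,dy\simeq|x|^{N-\alpha-\gamma}\log^\beta\frac{2e}{|x|}$; on the third piece $|x-y|\simeq|y|$, reducing to $\int_{|x|/2<|y|<1}|y|^{-\alpha-\gamma}\log^\beta\frac{2e}{|y|}\,dy$, a one-dimensional integral producing the dichotomy on $\alpha+\gamma$ versus $N$ and on $\beta$ versus $-1$. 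Summing the three contributions and comparing with the lower bound from (ii) (specialized to $\rho=0$, where the containment issues disappear) yields \eqref{lm1}. The main obstacle, as flagged, is making the $\rho$-uniform constant in (ii) genuinely independent of $\rho$ when $x$ lies near the inner sphere $\partial B_\rho$; I would resolve it by always extracting a fixed-size good annulus contained in $\{\rho<|y|<1\}$ on which both $|y|$ and $|x-y|$ are comparable to $\max\{|x|,\text{const}\}$, and treating the regime $|x|\le 4\rho$ by a direct comparison argument against the case $\alpha+\gamma<N$ bound (which is just $\gtrsim 1$).
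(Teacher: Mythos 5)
Your overall architecture is sound and, for the upper bound in (iii), genuinely different from the paper's. The paper rescales $y=r\eta$, $x=r\varsigma$ and splits only into $\{|\eta|\le 2\}$ and $\{2<|\eta|<2/r\}$; the near piece then still contains both singularities $|\varsigma-\eta|^{-\alpha}$ and $|\eta|^{-\gamma}$, which forces a separate, rather delicate treatment of the case $\beta>0$ (a further split $A_1+A_2$ and a monotonicity argument to absorb $\log^\beta\frac{2e}{rt}$ into $t^{-\varepsilon}\log^\beta\frac{4e}{r}$). Your three-region split $\{|y|<|x|/2\}\cup\{|x-y|<|x|/2\}\cup\{|y|,|x-y|>|x|/2\}$ isolates one singularity per region and one clean comparability ($|x-y|\simeq|x|$, $|y|\simeq|x|$, $|x-y|\simeq|y|$ respectively), so all sign cases of $\beta$ are handled uniformly; this is arguably tidier. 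Parts (i) and the sublinear cases $\alpha+\gamma\le N$ of (ii) coincide with the paper's argument (restrict to $\{2|x|<|y|<1\}$ where $|x-y|\simeq|y|$ and reduce to a one-dimensional integral).

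The one genuine soft spot is your lower bound for $\alpha+\gamma>N$ in (ii). You correctly flag that $B_{r/4}(x)$ need not lie in $\{\rho<|y|<1\}$ when $|x|$ is comparable to $\rho$, but your proposed repair ("treat the regime $|x|\le 4\rho$ by comparison against the $\gtrsim 1$ bound") cannot work: the target lower bound $|x|^{N-\alpha-\gamma}\log^\beta\frac{2e}{|x|}$ is unbounded as $|x|\to 0$, and $\rho$ may be arbitrarily small, so a bound by a constant is not enough, and the constant in \eqref{ro1} must be independent of $\rho$. Two clean fixes are available. Either keep your localization near $x$ but integrate only over $B_{r/4}(x)\cap\{y\cdot x>|x|^2\}$, a half-ball that lies in $\{|y|>|x|\}\supset\{|y|>\rho\}$ by the supporting-hyperplane property of $B_{|x|}(0)$ and carries exactly half of the radial integral; or, more simply, do what the paper does for every case: use the region $\{2|x|<|y|<1\}$, which is automatically contained in $\{\rho<|y|<1\}$ because $|x|\ge\rho$, and observe that for $\alpha+\gamma>N$ the resulting integral $\int_{2|x|}^{1}t^{N-\alpha-\gamma}\log^\beta\frac{e}{t}\,\frac{dt}{t}$ is dominated by its lower endpoint (restrict to $[2|x|,\tfrac52|x|]$ when $\beta>0$) and already yields $C|x|^{N-\alpha-\gamma}\log^\beta\frac{2e}{|x|}$. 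With either repair, and the routine compactness step for $|x|$ bounded away from $0$, your plan closes.
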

\begin{proof}
(i) Assume $\gamma \geq N$. For all $0 < |x| < 1$, let $0 < |y| < \frac{|x|}{2}$, so that
   \[\frac{|x|}{2} < |x| - |y| \leq |x -y| \leq |x| + |y| < 2|x|.\]
   It follows that
   \[\log\frac{e}{|x|} < \log\frac{2e}{|x-y|} < \log\frac{4e}{|x|}.\]
   Thus,
   $$
   \begin{aligned}
       \Ix &\geq \left(2|x|\right)^{-\alpha} \min\left\{\log^{\beta}\frac{e}{|x|}, \log^{\beta}\frac{4e}{|x|}\right\}\int_{|y| < \frac{|x|}{2}}|y|^{-\gamma}dy \\[0.2cm]
       &= C|x|^{-\alpha}\min\left\{\log^{\beta}\frac{e}{|x|}, \log^{\beta}\frac{4e}{|x|}\right\}\int_0^{\frac{|x|}{2}}t^{N - \gamma}\frac{dt}{t} = \infty.
   \end{aligned}
   $$
   (ii) Assume $0 \leq \gamma < N$. Then, we notice that 
   $$
   \Ix \leq  \K1 * f,
   $$
   where $\K1$ is defined in \eqref{ptt} and $f(x)= |x|^{-\gamma}$. Since $0\leq \alpha, \gamma <N$, it follows that $\K1, f \in L^1(B_1)$, and hence $\Ix < \infty$ for all $x \in B_1\setminus B_\rho$. \\
It is enough to establish the inequality \eqref{ro1} for all $r<|x|<\frac{2}{5}$.  Since all functions involved in \eqref{ro1} are continuous in $B_1\setminus \overline{B}_{\rho}$, the inequality \eqref{ro1} extends to all $x \in B_1\setminus B_\rho$ by choosing a smaller constant $c > 0$ which is independent of $\rho$. \\
   Now, let $\rho < |x| < \frac{2}{5}$ and $|y| > 2|x|$. Then $\frac{|y|}{2} < |x-y| < 2|y|$. Thus,
   \begin{equation}\label{lbe}
   \begin{aligned}
       \Ix &\geq \int_{2|x| < |y| < 1}\frac{\min\left\{\log^{\beta}\frac{e}{|y|}, \log^{\beta}\frac{4e}{|y|}\right\}}{(2|y|)^{\alpha}|y|^{\gamma}}dy \\[0.2cm]
       &= c\int_{2|x|}^1\min\left\{\log^{\beta}\frac{e}{t}, \log^{\beta}\frac{4e}{t}\right\}t^{N - \alpha - \gamma} \frac{dt}{t}.
      \end{aligned}
      \end{equation}
   If $\alpha + \gamma > N$ and $\beta \leq 0$, then the mapping $t \mapsto \log^{\beta}\frac{4e}{t}$ is increasing. Thus, from \eqref{lbe}, we obtain:
   \begin{align*}
       \Ix \geq C\log^{\beta}\frac{2e}{|x|}\int_{2|x|}^1 t^{N - \alpha - \gamma} \frac{dt}{t} \geq C|x|^{N-\alpha-\gamma}\log^{\beta}\frac{2e}{|x|} \quad\text{for some } C > 0.
   \end{align*}
   If $\alpha + \gamma > N$ and $\beta > 0$, then from \eqref{lbe} we have
   \begin{align*}
        \Ix &\geq c\int_{2|x|}^{\frac{5|x|}{2}}t^{N-\alpha-\gamma}\log^{\beta}\frac{e}{t}\frac{dt}{t} \\
        &\geq C\log^{\beta}\frac{2e}{5|x|}\int_{2|x|}^{\frac{5|x|}{2}}t^{N - \alpha - \gamma} \frac{dt}{t} \\
        &\geq C|x|^{N-\alpha-\gamma}\log^{\beta}\frac{2e}{|x|} \quad\text{for some } C > 0.
   \end{align*}
   If $\alpha + \gamma < N$, we take $\varepsilon > 0$ and use the fact that
   \[\min\left\{\log^{\beta}\frac{e}{t}, \log^{\beta}\frac{4e}{t}\right\} \geq ct^{\varepsilon}\quad\text{for all } 0 < t < 1.\]
   Thus, \eqref{lbe} yields
   \begin{equation*}
       \Ix \geq c\int_{2|x|}^1 t^{N-\alpha-\gamma + \varepsilon} \frac{dt}{t} \geq C \quad\text{for some } C > 0.
   \end{equation*}
   If $\alpha + \gamma = N$, then from \eqref{lbe}, we have
   \begin{align*}
       \Ix &\geq c\int_{2|x|}^1 \min\left\{\log^{\beta}\frac{e}{t}, \log^{\beta}\frac{4e}{t}\right\}\frac{dt}{t} \\[0.2cm]
       &\geq C\left\{
       \begin{aligned}
           &1 \quad&\text{if }\beta < -1\\[0.2cm]
           &\log\left(\log\frac{2e}{|x|}\right) \quad&\text{if }\beta = -1\\[0.2cm]
           &\log^{1+\beta}\frac{e}{|x|} \quad&\text{if }\beta > -1
       \end{aligned}\right.\quad\text{for some } C > 0.
   \end{align*}
(iii) The lower bound in \eqref{lm1} was established in part (ii) above in which we take $\rho=0$. With the same argument as above, it is enough to establish the upper bound in \eqref{lm1} for $0<|x|\leq \frac{1}{3}$. Let $r = |x|$, so that $0<r\leq \frac{1}{3}$ and assume that $x = r\varsigma$ and $y = r\eta$, where, in particular, $|\varsigma| = 1$. Then we have:
   \begin{equation}\label{ub}
   \begin{aligned}
       I_{\alpha, \beta, \gamma}(0,x) &\leq \int_{|y| < 2}\frac{\log^{\beta}\frac{2e}{|x-y|}}{|x - y|^{\alpha}|y|^{\gamma}}dy \\
       &= \int_{r|\eta|<2}\frac{\log^{\beta}\frac{2e}{r|\varsigma - \eta|}}{|r|\varsigma| - r|\eta||^{\alpha}(r|\eta|)^{\gamma}}d(r\eta) \\
       &= r^{N-\alpha-\gamma}\left\{\int_{0 < |\eta| \leq 2} + \int_{2 < |\eta| < \frac{2}{r}}\right\}\frac{\log^{\beta}\frac{2e}{r|\varsigma - \eta|}}{|\varsigma-\eta|^{\alpha}|\eta|^{\gamma}}\,d\eta \\
       &=:  r^{N-\alpha-\gamma}(A + B).
   \end{aligned}
   \end{equation}
Next, we divide our proof into two cases.\medskip\\
{\bf Case 1}: $\beta \leq 0$. We begin by estimating the quantity $A$ in \eqref{ub}. Since $0 < |\eta| \leq 2$ and $|\varsigma| = 1$, it follows that $|\varsigma - \eta| \leq |\varsigma| + |\eta| \leq 3$ and then
   \begin{align}\label{ubA}
       A \leq \log^{\beta}\frac{2e}{3r}\int_{0 < |\eta| \leq 2}\frac{d\eta}{|\varsigma - \eta|^{\alpha}|\eta|^{\gamma}} \leq C\log^{\beta}\frac{2e}{3r} \leq C\log^{\beta}\frac{2e}{r}\quad \text{for some } C > 0,
   \end{align}
   since $0<r\leq 1/3$ and 
   $$0 < \max\limits_{|\varsigma| = 1}\int_{0 < |\eta| \leq 2}\frac{d\eta}{|\varsigma - \eta|^{\alpha}|\eta|^{\gamma}} < \infty.$$
   To estimate the quantity $B$ in \eqref{ub}, we observe that $\frac{|\eta|}{2} < |\varsigma - \eta| < 2|\eta|$, which allows us to use $\beta \leq 0$ to obtain:
   \begin{align}\label{ubB}
       B &\leq  C\int_{2<|\eta|<\frac{2}{r}}|\eta|^{-\alpha - \gamma} \log^{\beta}\frac{e}{r|\eta|} d\eta \nonumber \\
       &= C\int_2^{\frac{2}{r}}t^{N-\alpha-\gamma}\log^{\beta}\frac{e}{rt}\frac{dt}{t} \,,\quad \text{for some } C > 0.
   \end{align}
   If $\alpha + \gamma > N$, we take $0<\varepsilon<\alpha+\gamma-N$ small. The function 
   $$g:\Big[2,\frac{2}{r}\Big] \to (0, \infty)\, , \quad g(t)=t^{-\varepsilon}\log^\beta\frac{e}{rt}$$ 
   is decreasing for $\beta=0$ while for $\beta<0$ it satisfies 
   $$g'(t)=-\varepsilon t^{-\varepsilon-1}\log^{\beta-1}\frac{e}{rt}\Big\{\log\frac{e}{rt}+\frac{\beta}{\varepsilon}\Big\}.$$ 
   Thus $g$ has at most one critical point and for $\varepsilon>0$ small enough we have $g'\left(\frac{2}{r}\right)>0$. This means that $g$ achieves its maximum at the endpoints of the interval $\big[2,\frac{2}{r}\big]$. Note that 
   $$
   g(2)=2^{-\varepsilon}\log^\beta \frac{e}{2r} \quad\mbox{ and }\quad g\left(\frac{2}{r}\right)=2^{-\varepsilon} r^\varepsilon \log^\beta \frac{e}{2}.$$
   Hence, for some constant $C_\varepsilon>0$ we have
   $$
   g(t)\leq C_\varepsilon \log^\beta \frac{e}{2r} \quad \mbox{ for all } 2\leq t\leq \frac{2}{r} , \, 0<r\leq \frac{1}{3}.$$
   Using this last estimate in \eqref{ubB}, we derive
   \begin{equation}\label{ub>}
       B \leq C\log^{\beta}\frac{e}{2r}\int_2^{\frac{2}{r}}t^{N-\alpha-\gamma+\varepsilon}\,\frac{dt}{t} \leq C\log^{\beta}\frac{e}{2r} \leq C\log^{\beta}\frac{2e}{r} \quad\text{for some } C > 0.
   \end{equation}
   If $\alpha + \gamma < N$, then from \eqref{ubB} we have
   \begin{equation}\label{ub<}
       B \leq C\log^{\beta}\frac{e}{2}\int_2^{\frac{2}{r}}t^{N-\alpha-\gamma}\,\frac{dt}{t}  \leq Cr^{\alpha + \gamma - N} \quad\text{for some } C > 0.
   \end{equation}
   If $\alpha + \gamma = N$, then from \eqref{ubB} we have
    \begin{equation}\label{ub=}
    B \leq C\left\{\begin{aligned}
           &1 \quad&\text{if } \beta < -1\\[0.2cm]
           &\log\log\frac{e}{2r}\quad&\text{if } \beta = -1\\[0.2cm]
           &\log^{1+\beta}\frac{e}{2r}\quad&\text{if } \beta > -1\\
       \end{aligned}\right.\quad\text{for some } C > 0.
    \end{equation}
    Now, we combine \eqref{ub}, \eqref{ubA}, \eqref{ub>}, \eqref{ub<} and \eqref{ub=} to estimate:
    \begin{align}
        I_{\alpha, \beta, \gamma}(0,x) &\leq Cr^{N-\alpha-\gamma}\left\{\log^{\beta}\frac{2e}{r} + B\right\} \nonumber\\[0.2cm]
        &\leq C\left\{\begin{aligned}
            &1 \quad&\text{if } \alpha + \gamma < N&\\[0.2cm]
            &1 \quad&\text{if } \alpha + \gamma = N&, \beta < -1\\[0.1cm]
            &\log\log\frac{2e}{|x|} \quad&\text{if } \alpha + \gamma = N&, \beta = -1\\
            &\log^{1+\beta}\frac{2e}{|x|} \quad&\text{if } \alpha + \gamma = N&, \beta > -1\\
            &|x|^{N-\alpha-\gamma}\log^{\beta}\frac{2e}{|x|} \quad&\text{if } \alpha + \gamma > N&
        \end{aligned}\right.\quad\text{for some } C > 0.
    \end{align} 
{\bf Case 2: $\beta > 0$}. To estimate the integral $B$ in \eqref{ub}, we notice that as in Case 1 above we have $\frac{|\eta|}{2} < |\varsigma - \eta| < 2|\eta|$. Thus, we obtain
    \begin{align}\label{ubB2}
        B &\leq C\log^{\beta}\frac{4e}{r}\int_{2 < |\eta| < \frac{2}{r}}\frac{d\eta}{|\eta|^{\alpha + \gamma}} \nonumber\\
        &= C\log^{\beta}\frac{4e}{r} \int_2^{\frac{2}{r}} t^{N-\alpha - \gamma} \frac{dt}{t} \nonumber\\[0.2cm]
        &\leq C\log^{\beta}\frac{4e}{r}\left\{\begin{aligned}
            &1 \quad&\text{if } \alpha + \gamma > N\\
            &\log\frac{1}{r}&\text{if } \alpha + \gamma = N\\
            &r^{\alpha + \gamma - N}&\text{if } \alpha + \gamma < N\\
        \end{aligned}\right.\quad\text{for some } C > 0.
    \end{align}
    For the integral $A$ in \eqref{ub}, we note that $|\eta| \leq 2$ and $|\varsigma| = 1$ implies $|\varsigma-\eta| \leq 3$ and then
    \begin{align}
        A &\leq \int_{|\varsigma-\eta| \leq 3}\frac{\log^{\beta}\frac{2e}{r|\varsigma-\eta|}\,d\eta}{|\eta-\varsigma|^{\alpha}|\eta - \varsigma+\varsigma|^{\gamma}}\nonumber\\
        &= \int_{|z|\leq 3}\frac{\log^{\beta}\frac{2e}{r|z|}}{|z|^{\alpha}|z + \varsigma|^{\gamma}}dz  \nonumber \\
        &= \int_{|z| \leq \frac{1}{2}}\frac{\log^{\beta}\frac{2e}{r|z|}}{|z|^{\alpha}|z + \varsigma|^{\gamma}}dz + \int_{\frac{1}{2} < |z| \leq 3} \frac{\log^{\beta}\frac{2e}{r|z|}}{|z|^{\alpha}|z + \varsigma|^{\gamma}}dz =: A_1 + A_2. \nonumber
    \end{align}
    To estimate $A_1$, we note that $|z + \varsigma| \geq |\varsigma| - |z| = 1 - |z|$ and thus
    \begin{align}\label{ubA_1}
        A_1 &\leq C\int_0^{\frac{1}{2}}t^{N-\alpha}(1-t)^{-\gamma}\log^{\beta}\frac{2e}{rt}\frac{dt}{t} \nonumber\\
        &\leq C2^{\gamma}\int_0^{\frac{1}{2}}t^{N-\alpha}\log^{\beta}\frac{2e}{r t}\frac{dt}{t}.
    \end{align}
    Take $0 < \varepsilon < N - \alpha$ and note that for $C > 1 + \frac{\beta}{\varepsilon}$ large, the function
    $$h(t) = \log^{\beta}\frac{2e}{rt} - Ct^{-\varepsilon}\log^{\beta}\frac{4e}{r}$$
    is increasing for all $t \in (0, \frac{1}{2}]$ since
    $$h'(t) = -\frac{\beta}{t}\log^{\beta-1}\frac{2e}{rt} + C\varepsilon t^{-\varepsilon-1}\log^{\beta}\frac{4e}{r} > 0\quad\text{for all } 0 < t \leq \frac{1}{2}.$$
    It follows that $h(t) \leq h(\frac{1}{2}) < 0$ and
    $$\log^{\beta}\frac{2e}{rt} \leq Ct^{-\varepsilon}\log^{\beta}\frac{4e}{r} \quad\text{for all } t \in (0, \frac{1}{2}]\, , r \in (0,1).$$
    Thus, from \eqref{ubA_1} we estimate:
    \begin{align}\label{ubA_12}
        A_1 \leq C\log^{\beta}\frac{4e}{r}\int_0^{\frac{1}{2}}t^{N - \alpha - \varepsilon} \frac{dt}{t} \leq C\log^{\beta}\frac{4e}{r}.
    \end{align}
    Moreover, we have
    \begin{align}\label{ubA_2}
        A_2 &\leq C\log^{\beta}\frac{4e}{r}\int_{\frac{1}{2}< |z| \leq 3}|z|^{-\alpha}|z + \varsigma|^{-\gamma}dz \nonumber \\
        &\leq C\log^{\beta}\frac{4e}{r}\int_{\frac{1}{2}< |z| \leq 3}|z + \varsigma|^{-\gamma}dz \nonumber \\
        &\leq C\log^{\beta}\frac{4e}{r} \int_{|z + \varsigma| \leq 4} |z + \varsigma|^{-\gamma}dz \nonumber \\
        &=  C\log^{\beta}\frac{4e}{r} \int_0^4 t^{N - \gamma} \frac{dt}{t} \leq C\log^{\beta}\frac{4e}{r}.
    \end{align}
    Hence, combining \eqref{ub}, \eqref{ubB2}, \eqref{ubA_12} and \eqref{ubA_2}, we obtain the upper bound in \eqref{lm1} for the case $\beta > 0$. Together with the case $\beta \leq 0$, this concludes the proof of the upper bound and thereby completes the proof of our Lemma.
\end{proof}
\begin{lemma}\label{lm_2}
Let $0 \leq \alpha < N$, $\beta \in \R$, $0\leq \rho\leq \frac{1}{3}$ and $\theta \geq 0$. Define:
\begin{equation}
    \Jx = \int_{\rho<|y|<1}\frac{\log^{\beta}\frac{2e}{|x-y|}\log^{\theta}\frac{2e}{|y|}}{|x-y|^{\alpha}}dy\, , \quad\mbox{ for all }x\in B_1\setminus B_\rho.
\end{equation}
Then, there exist two constants $c_2>c_1>0$ independent of $\rho$ such that 
\begin{equation}\label{jro}
c_1\leq \Jx\leq c_2\quad\mbox{ for all }\rho<|x|<1.
\end{equation}
\end{lemma}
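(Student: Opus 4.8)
The plan is to trap $J_{\alpha,\beta,\theta}(\rho,x)$ between two quantities of the form $I_{\alpha,\beta,\gamma}(\cdot,x)$ already estimated in Lemma \ref{lm_1}, using only crude two-sided bounds on the weight $\log^\theta\frac{2e}{|y|}$. For the lower bound I would note that for $0<|y|<1$ one has $\log\frac{2e}{|y|}>\log(2e)>1$, so $\theta\ge0$ gives $\log^\theta\frac{2e}{|y|}\ge1$. Dropping this factor yields
$$J_{\alpha,\beta,\theta}(\rho,x)\ \ge\ \int_{\rho<|y|<1}\frac{\log^\beta\frac{2e}{|x-y|}}{|x-y|^\alpha}\,dy\ =\ I_{\alpha,\beta,0}(\rho,x).$$
Since $\gamma=0$ satisfies $\alpha+\gamma=\alpha<N$, the last line of \eqref{ro1} in Lemma \ref{lm_1}(ii) gives $I_{\alpha,\beta,0}(\rho,x)\ge c_1$ for a constant $c_1>0$ independent of $\rho$ and of $x\in B_1\setminus B_\rho$. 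This is the left inequality in \eqref{jro}.

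For the upper bound I would fix $\varepsilon\in(0,N-\alpha)$, which is possible since $\alpha<N$. As $\theta\ge0$, the elementary one–variable inequality $\log^\theta\frac{2e}{t}\le C_\varepsilon\,t^{-\varepsilon}$ holds for all $0<t<1$ with $C_\varepsilon$ depending only on $\theta,\varepsilon$; hence
$$J_{\alpha,\beta,\theta}(\rho,x)\ \le\ C_\varepsilon\int_{\rho<|y|<1}\frac{\log^\beta\frac{2e}{|x-y|}}{|x-y|^\alpha|y|^\varepsilon}\,dy\ =\ C_\varepsilon\,I_{\alpha,\beta,\varepsilon}(\rho,x).$$
Because the integrand is positive (note $|x-y|<2$, so $\log\frac{2e}{|x-y|}>1$) and $\{\rho<|y|<1\}\subset\{0<|y|<1\}$, monotonicity of the integral gives $I_{\alpha,\beta,\varepsilon}(\rho,x)\le I_{\alpha,\beta,\varepsilon}(0,x)$, the latter being finite by Lemma \ref{lm_1}(ii) since $0\le\varepsilon<N$. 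Now $\gamma=\varepsilon$ satisfies $\alpha+\gamma=\alpha+\varepsilon<N$, so the last line of \eqref{lm1} in Lemma \ref{lm_1}(iii) gives $I_{\alpha,\beta,\varepsilon}(0,x)\simeq1$, in particular $I_{\alpha,\beta,\varepsilon}(0,x)\le C$ uniformly in $x\in B_1\setminus\{0\}$. Thus $J_{\alpha,\beta,\theta}(\rho,x)\le c_2:=C_\varepsilon C$, independent of $\rho$, which is the right inequality in \eqref{jro}.

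I do not expect any genuine obstacle here: the estimate is immediate once Lemma \ref{lm_1} is available. The only point requiring a moment's attention is the uniformity of the bounds in $\rho$ — for the lower bound this is already built into the statement of Lemma \ref{lm_1}(ii), while for the upper bound it follows from the monotonicity step $I_{\alpha,\beta,\varepsilon}(\rho,x)\le I_{\alpha,\beta,\varepsilon}(0,x)$, which reduces everything to the $\rho=0$ case covered by Lemma \ref{lm_1}(iii). One should also record the trivial verification of $\log^\theta\frac{2e}{t}\le C_\varepsilon t^{-\varepsilon}$ on $(0,1)$, but this is a routine calculus fact.
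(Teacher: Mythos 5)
Your proof is correct, and it reaches the conclusion by a different (and more economical) route than the paper for the upper bound. The paper proves Lemma \ref{lm_2} directly: the lower bound by restricting the integration to the annulus $\frac13<|y|<1$ (which is contained in $\{\rho<|y|<1\}$ since $\rho\le\frac13$) and estimating the kernel from below, and the upper bound by splitting at $|y|=\frac{|x|}{2}$, treating the region $|y|\le\frac{|x|}{2}$ via $|x-y|\simeq|x|$ and the region $|y|>\frac{|x|}{2}$ by absorbing $\log^{\theta}\frac{2e}{|y|}$ into a power of $\log\frac{Ce}{|x-y|}$, reducing to a one-dimensional integral. You instead deduce everything from Lemma \ref{lm_1}: the lower bound by dropping the weight $\log^{\theta}\frac{2e}{|y|}\ge1$ and invoking the case $\alpha+\gamma<N$ of \eqref{ro1} with $\gamma=0$ (legitimate, since the constant there is uniform in $\rho$ and $x$), and the upper bound by the calculus inequality $\log^{\theta}\frac{2e}{t}\le C_\varepsilon t^{-\varepsilon}$ together with the monotonicity $I_{\alpha,\beta,\varepsilon}(\rho,x)\le I_{\alpha,\beta,\varepsilon}(0,x)$ (valid because the integrand is positive, as $|x-y|<2<2e$) and the case $\alpha+\varepsilon<N$ of \eqref{lm1}. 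Your argument is shorter but leans on the full upper bound of Lemma \ref{lm_1}(iii), which is the most technical estimate of the section (requiring the rescaling $y=r\eta$ and a case analysis in $\beta$), whereas the paper's direct proof of Lemma \ref{lm_2} is self-contained and elementary. Both are sound; no gaps.
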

\begin{proof}
    Let $\rho<|x|<1$. We first derive the lower bound in \eqref{jro}. Taking $0 < \varepsilon < \alpha$,  we have
    \begin{align}\label{jlb}
    \Jx &\geq \int_{\frac{1}{3}<|y| < 1} \frac{\log^{\beta}\frac{2e}{|x - y|}}{|x - y|^{\alpha}}dy \nonumber\\
    &\geq C\int_{\frac{1}{3}< |y| < 1} |x - y|^{\varepsilon - \alpha} dy \nonumber\\
    &\geq C\int_{\frac{1}{3}< |y| < 1} (1 + |y|)^{\varepsilon - \alpha} dy \geq C > 0.
    \end{align}
    For the upper bound, we decompose $\Jx$ into the following two parts: 
    \begin{equation}\label{jub}
        \Jx \leq  \left\{\int_{0<|y| \leq \frac{|x|}{2}} + \int_{\frac{|x|}{2} < |y| < 1}\right\}\frac{\log^{\beta}\frac{2e}{|x - y|}\log^{\theta}\frac{2e}{|y|}}{|x - y|^{\alpha}}dy =: J_1 + J_2.
    \end{equation}
    To estimate $J_1$ in \eqref{jub}, let $0 < \varepsilon < N - \alpha$ be small. For $0<|y| \leq \frac{|x|}{2}$, we observe that $\frac{|x|}{2} < |x| - |y| \leq |x - y| \leq \frac{3|x|}{2}$. Then,
    \begin{align}\label{j1}
        J_1 &\leq C\int_{|y| \leq \frac{|x|}{2}}|x|^{-\alpha}\max\left\{\log^{\beta}\frac{4e}{|x|}, \log^{\beta}\frac{4e}{3|x|} \right\} \log^{\theta}\frac{2e}{|y|}\,dy \nonumber\\
        &= C|x|^{-\alpha}\max\left\{\log^{\beta}\frac{4e}{|x|}, \log^{\beta}\frac{4e}{3|x|} \right\}\int_0^{\frac{|x|}{2}}t^{N}\log^{\theta}\frac{2e}{t}\frac{dt}{t} \nonumber \\
        &\leq C|x|^{-\alpha}\max\left\{\log^{\beta}\frac{4e}{|x|}, \log^{\beta}\frac{4e}{3|x|} \right\}\int_0^{\frac{|x|}{2}}t^{N - \varepsilon} \frac{dt}{t} \leq C < \infty.
    \end{align}
    To estimate $J_2$ in \eqref{jub}, we note that $\frac{|x|}{2} < |y| < 1$. We consider the following two subcases:\\
    If $\frac{|x|}{2} < |y| \leq 2|x|$, then 
    $$|x - y| \leq |x| + |y| \leq 3|x| \leq 6|y| \quad \text{and thus} \quad \log^{\theta}\frac{2e}{|y|} \leq \log^{\theta}\frac{12e}{|x - y|}.$$
    If $2|x| < |y| < 1$, then 
    $$|x - y| \leq |x| + |y| \leq \frac{3|y|}{2}\quad \text{and thus} \quad \log^{\theta}\frac{2e}{|y|} \leq \log^{\theta}\frac{3e}{|x - y|}.$$
    It follows that
    \begin{align}\label{j2}
        J_2 &\leq \int_{\frac{|x|}{2} < |y| < 1}\frac{\log^{\beta}\frac{2e}{|x - y|}\log^{\theta}\frac{12e}{|x - y|}}{|x - y|^{\alpha}}dy \nonumber\\
        &\leq \int_{|z| < 2}\frac{\log^{\beta}\frac{2e}{|z|}\log^{\theta}\frac{12e}{|z|}}{|z|^{\alpha}}dz \quad\mbox{ where } z=x-y, |z|<2,\nonumber\\
        &= \int_0^2 t^{N - \alpha}\log^{\beta}\frac{2e}{t}\log^{\theta}\frac{12e}{t} \frac{dt}{t} \leq C < \infty.
    \end{align}
    Hence, by combining \eqref{jlb}, \eqref{jub}, \eqref{j1} and \eqref{j2}, we conclude the proof of the lemma.
\end{proof}

\section{The fundamental solution of \texorpdfstring{$-\Delta+\mu I$}{-∆ + μI}}
Let $\mu>0$ be a positive real number. The fundamental solution of the operator $-\Delta+\mu I$ in $\R^N$, $N\geq 3$, is given by (see \cite[Section 3]{B96})  
\begin{equation}\label{fbesselp}
G_{\mu}(x)=\frac{|x|^{2-N}}{(2\pi)^{N/2}}  \frac{K_{\frac{N-2}{2}}(\sqrt \mu |x|)}{(\sqrt \mu |x|)^{-\frac{N-2}{2}}},
\end{equation}
where $K_{\frac{N-2}{2}}$ is the modified Bessel function of the second kind of order $\frac{N-2}{2}$. Precisely, we have
$$
(-\Delta+\mu I) G_{\mu}=\delta_0\quad\mbox{ in }\mathscr{D}'(\R^N), N\geq 3.
$$
From \cite[page 415]{AS61} we have the following property of $K_{\frac{N-2}{2}}$:
\begin{equation}\label{Be2}
\lim_{z\to 0} \frac{K_{\frac{N-2}{2}}(z)}{|z|^{-\frac{N-2}{2}}}=2^{\frac{N-4}{2}}\Gamma\left(\frac{N-2}{2}\right),
\end{equation}
where $\Gamma$ is the standard Gamma function.

\begin{lemma}\label{fs}
Let $N\geq 3$. Then, for all $\mu>0$ we have 
$$\lim_{|x|\to 0}\frac{G_\mu(x)}{E(|x|)}=1,
$$
where $E(|x|)$ is the fundamental solution of the Laplace operator, see \eqref{E}.
\end{lemma}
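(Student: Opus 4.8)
The plan is to compute the limit directly from the explicit formula \eqref{fbesselp} for $G_\mu$ together with the asymptotic \eqref{Be2} of the modified Bessel function near the origin. The key observation is that \eqref{fbesselp} can be rewritten as
\[
G_\mu(x)=\frac{|x|^{2-N}}{(2\pi)^{N/2}}\cdot\frac{K_{\frac{N-2}{2}}(\sqrt\mu\,|x|)}{(\sqrt\mu\,|x|)^{-\frac{N-2}{2}}},
\]
so that, setting $z=\sqrt\mu\,|x|$, the fraction on the right tends, as $|x|\to 0$ (equivalently $z\to 0$), to $2^{\frac{N-4}{2}}\Gamma\!\left(\frac{N-2}{2}\right)$ by \eqref{Be2}. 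Hence
\[
\lim_{|x|\to 0}\frac{G_\mu(x)}{|x|^{2-N}}=\frac{2^{\frac{N-4}{2}}\Gamma\!\left(\frac{N-2}{2}\right)}{(2\pi)^{N/2}}.
\]

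The remaining step is purely computational: I must verify that this constant equals $\dfrac{1}{(N-2)\sigma_N}$, which is the coefficient of $|x|^{2-N}$ in $E(|x|)$ from \eqref{E} (recall $\sigma_N$ is the surface area of the unit sphere in $\R^N$). Using $\sigma_N=\dfrac{2\pi^{N/2}}{\Gamma(N/2)}$ and the functional equation $\Gamma(N/2)=\frac{N-2}{2}\,\Gamma\!\left(\frac{N-2}{2}\right)$, one gets $(N-2)\sigma_N=\dfrac{2\pi^{N/2}(N-2)}{\Gamma(N/2)}=\dfrac{4\pi^{N/2}}{\Gamma\!\left(\frac{N-2}{2}\right)}$, and a short check against $\dfrac{(2\pi)^{N/2}}{2^{\frac{N-4}{2}}\Gamma\!\left(\frac{N-2}{2}\right)}=\dfrac{2^{N/2}\pi^{N/2}}{2^{\frac{N-4}{2}}\Gamma\!\left(\frac{N-2}{2}\right)}=\dfrac{2^{2}\pi^{N/2}}{\Gamma\!\left(\frac{N-2}{2}\right)}=\dfrac{4\pi^{N/2}}{\Gamma\!\left(\frac{N-2}{2}\right)}$ shows the two expressions agree. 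Therefore $\lim_{|x|\to 0}G_\mu(x)/E(|x|)=1$, as claimed.

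There is essentially no obstacle here: the statement follows immediately once the explicit formula and the Bessel asymptotics are in place, and the only care needed is to keep track of the various powers of $2$ and $\pi$ and to invoke the correct normalization $\sigma_N=2\pi^{N/2}/\Gamma(N/2)$ and the Gamma recursion. I would present the argument as a one-paragraph computation.
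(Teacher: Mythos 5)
Your proposal is correct and follows exactly the paper's own argument: both plug the Bessel asymptotic \eqref{Be2} into the explicit formula \eqref{fbesselp} and then reduce the resulting constant to $1$ via $\sigma_N=2\pi^{N/2}/\Gamma(N/2)$ and the recursion $\Gamma(N/2)=\frac{N-2}{2}\Gamma\bigl(\frac{N-2}{2}\bigr)$. The arithmetic with the powers of $2$ and $\pi$ checks out, so nothing further is needed.
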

\begin{proof} Since $\sigma_N=\frac{2\pi^{N/2}}{\Gamma\left(\frac{N}{2}\right)}$, from \eqref{E} and \eqref{Be2} we find
$$
\lim_{|x|\to 0}\frac{G_{\mu}(x)}{E(|x|)}= \frac{N-2}{2^{\frac{N-2}{2}}\Gamma\left(\frac{N}{2}\right)} \lim_{|x|\to 0} \frac{K_{\frac{N-2}{2}}(\sqrt \mu |x|)}{(\sqrt \mu |x|)^{-\frac{N-2}{2}}}=
\frac{\frac{N-2}{2}\Gamma\left(\frac{N-2}{2}\right)}{\Gamma\left(\frac{N}{2}\right)}=1.
$$
\end{proof}

\section{Local regularity of Riesz potentials}

\noindent Let $f\in L^1(B_1)$ and $0\leq \alpha<N$, $\beta\in \R$. For all $x\in B_1$ we set
$$I_{\alpha, \beta} f(x):=\left(K_{\alpha, \beta}*f\right)(x)=\int_{B_1}\frac{ \log^{\beta} \frac{2e}{|x-y|} }{|x-y|^\alpha} f(y)dy.$$
\begin{lemma}\label{rieszlog}
  Let $B\subset B_1$ be an open ball and $\rho:={\rm dist}(0, B)/2\geq 0$. Let also $1<s<\infty$ be such that $\alpha<\frac{N}{s'}<\alpha+1$, where $s'=\frac{s}{s-1}>1$.
\begin{enumerate}
\item[(i)] If $\rho>0$ and $f\in L^1(B_1)\cap L^s(B_1\setminus B_\rho)$, then, 
there exists a constant $C=C(N, \alpha, \beta, \rho, s)>0$ such that
\begin{equation}\label{rl1}
    |I_{\alpha, \beta}f(x)-I_{\alpha, \beta}f(y)|\leq C|x-y|^{\frac{N}{s'}-\alpha}\log^{\beta^+}\left(\frac{2e}{|x-y|}\right)\left( \|f\|_{L^s(B_1\setminus B_\rho)}+ \|f\|_{L^1(B_\rho)} \right),
\end{equation}
for all $x, y\in B$, where $\beta^+=\max\{\beta, 0\}$.
\item[(ii)] If $f\in L^s(B_1)$, then, 
there exists a constant $C=C(N, \alpha, \beta, s)>0$ such that
\begin{equation}\label{rl2}
    |I_{\alpha, \beta}f(x)-I_{\alpha, \beta}f(y)|\leq C|x-y|^{\frac{N}{s'}-\alpha}\log^{\beta^+}\left(\frac{2e}{|x-y|}\right) \|f\|_{L^s(B_1)} ,
\end{equation}
for all $x, y\in B_1$. In particular, for all $0<\varepsilon<\frac{N}{s'}-\alpha$, the mapping
$$
I_{\alpha, \beta}: L^s(B_1)\to \mathcal{C}^{0, \frac{N}{s'}-\alpha-\varepsilon}\left(\overline{B}_1\right) \quad\mbox{ is continuous}.$$
\end{enumerate}
\end{lemma}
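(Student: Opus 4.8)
The plan is to reduce the two Hölder estimates to a single pointwise bound on the "difference kernel" and then integrate. The heart of the matter is the elementary estimate, valid for $0\le\alpha<N$, $\beta\in\R$ and a suitable exponent $s'$ with $\alpha<\frac{N}{s'}<\alpha+1$:
\[
\left(\int_{B_R}\left|\,\frac{\log^{\beta}\frac{2e}{|x-z|}}{|x-z|^{\alpha}}-\frac{\log^{\beta}\frac{2e}{|y-z|}}{|y-z|^{\alpha}}\,\right|^{s'}dz\right)^{1/s'}\le C\,|x-y|^{\frac{N}{s'}-\alpha}\log^{\beta^+}\!\frac{2e}{|x-y|},
\]
where $R$ is comparable to the size of $B_1$. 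To prove this I would split $B_R$ into the region $\{|x-z|<2|x-y|\}$ (and symmetrically $\{|y-z|<2|x-y|\}$), where one estimates each term of the difference separately using that $r\mapsto r^{-\alpha}\log^{\beta}\frac{2e}{r}$ is integrable to the power $s'$ precisely because $\alpha s'<N$ (this is the left inequality $\alpha<\frac{N}{s'}$), and the far region $\{|x-z|,|y-z|\ge 2|x-y|\}$, where the mean value theorem applied to $r\mapsto r^{-\alpha}\log^{\beta}\frac{2e}{r}$ gives a gradient bound of order $|x-y|\cdot |x-z|^{-\alpha-1}\log^{\beta^+}\frac{2e}{|x-z|}$; raising to the power $s'$ and integrating over $|x-z|\ge 2|x-y|$ converges because $(\alpha+1)s'>N$ (this is the right inequality $\frac{N}{s'}<\alpha+1$), and yields the stated power $|x-y|^{\frac{N}{s'}-\alpha}$ after the integration. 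The logarithmic factors are handled by monotonicity of $\log$ on the relevant ranges together with the crude bounds $\log\frac{2e}{ab}\le\log\frac{2e}{a}+\log\frac1b$ and, for the far region, the observation that $\log\frac{2e}{|x-z|}\lesssim\log\frac{2e}{|x-y|}$ cannot be used directly, so instead one keeps $\log^{\beta^+}\frac{2e}{|x-z|}$ inside the integral and checks that $\int_{2|x-y|}^{2R} r^{(\alpha+1-N)s'-1}\log^{\beta^+ s'}\frac{2e}{r}\,dr$ is dominated by $|x-y|^{(\alpha+1-N)s'}\log^{\beta^+ s'}\frac{2e}{|x-y|}$ — a standard estimate since the power $r^{(\alpha+1-N)s'-1}$ is integrable at infinity relative to any fixed log power and the integral is dominated by its lower endpoint.

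Granting this kernel estimate, part (ii) follows immediately: write
\[
I_{\alpha,\beta}f(x)-I_{\alpha,\beta}f(y)=\int_{B_1}\!\left(\frac{\log^{\beta}\frac{2e}{|x-z|}}{|x-z|^{\alpha}}-\frac{\log^{\beta}\frac{2e}{|y-z|}}{|y-z|^{\alpha}}\right)\!f(z)\,dz
\]
and apply Hölder with exponents $s'$ and $s$, using the kernel estimate with $B_R\supset B_1-B_1$ (say $R=2$). This gives \eqref{rl2}. The continuity of $I_{\alpha,\beta}$ into $\mathcal C^{0,\frac{N}{s'}-\alpha-\varepsilon}(\overline B_1)$ then follows because $|x-y|^{\frac{N}{s'}-\alpha}\log^{\beta^+}\frac{2e}{|x-y|}\le C_\varepsilon|x-y|^{\frac{N}{s'}-\alpha-\varepsilon}$ on $\overline B_1$, together with the trivial $L^\infty$ bound $\|I_{\alpha,\beta}f\|_\infty\le C\|f\|_{L^s(B_1)}$ (Hölder again, since $K_{\alpha,\beta}\in L^{s'}(B_1)$).

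For part (i), I would split $f=f\mathbf 1_{B_\rho}+f\mathbf 1_{B_1\setminus B_\rho}$. For the piece $f\mathbf 1_{B_1\setminus B_\rho}$, the argument of part (ii) applies verbatim with $\|f\|_{L^s(B_1)}$ replaced by $\|f\|_{L^s(B_1\setminus B_\rho)}$. For the piece $f\mathbf 1_{B_\rho}$, one exploits that $x,y\in B$ are at distance $\ge\rho$ from the support $B_\rho$, so the kernel $z\mapsto |x-z|^{-\alpha}\log^{\beta}\frac{2e}{|x-z|}$ restricted to $z\in B_\rho$ is smooth in $x$ on $B$ with derivative bounded in terms of $N,\alpha,\beta,\rho$ only; hence the mean value theorem gives
\[
\left|I_{\alpha,\beta}(f\mathbf 1_{B_\rho})(x)-I_{\alpha,\beta}(f\mathbf 1_{B_\rho})(y)\right|\le C(N,\alpha,\beta,\rho)\,|x-y|\,\|f\|_{L^1(B_\rho)},
\]
and since $|x-y|\le C(\rho)\,|x-y|^{\frac{N}{s'}-\alpha}\log^{\beta^+}\frac{2e}{|x-y|}$ for $x,y\in B\subset B_1$ (using $\frac{N}{s'}-\alpha<1$ and boundedness of the diameter), this is absorbed into the right-hand side of \eqref{rl1}. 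Combining the two pieces yields \eqref{rl1}.

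The main obstacle is the kernel estimate in the first paragraph, specifically the bookkeeping of the logarithmic factors $\log^{\beta}\frac{2e}{\,\cdot\,}$ when $\beta$ has either sign: one must be careful that in the near region the bad term is $\log^{\beta^-}$-type (which is still $L^{s'}$ against $r^{-\alpha}$ since $\alpha s'<N$ leaves room for any log power), while in the far region, applying the mean value theorem to $r^{-\alpha}\log^{\beta}\frac{2e}{r}$ produces a $\log^{\beta-1}$ contribution from differentiating the log — one checks $|\partial_r(r^{-\alpha}\log^{\beta}\frac{2e}{r})|\le C r^{-\alpha-1}\log^{\beta^+}\frac{2e}{r}$ for $0<r\le 2$ by separating $\beta\ge 0$ and $\beta<0$ — and the final integration over $r\ge 2|x-y|$ must reproduce exactly $\log^{\beta^+}\frac{2e}{|x-y|}$ and no worse; this last point is where the strict inequality $\frac{N}{s'}<\alpha+1$ is essential, as it makes the integrand decay fast enough that the integral is comparable to its value at the lower endpoint, up to the single log factor carried along.
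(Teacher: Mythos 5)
Your proposal is correct and follows essentially the same route as the paper: split at the scale $|x-y|$ (near region handled by direct $L^{s'}$ integrability of the kernel since $\alpha s'<N$, far region by the mean value theorem with the gradient bound $|\nabla K_{\alpha,\beta}(w)|\lesssim |w|^{-\alpha-1}\log^{\beta^+}(2e/|w|)$ and $(\alpha+1)s'>N$), and treat the $B_\rho$ piece separately via the mean value theorem against $\|f\|_{L^1(B_\rho)}$ using that points of $B$ stay at distance $\geq\rho$ from $B_\rho$ --- exactly the paper's terms $A_1$--$A_4$, the only organizational difference being that you derive (i) from (ii) plus the $B_\rho$ correction while the paper proves (i) directly and obtains (ii) as the case $\rho=0$. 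The one small slip is the exponent in your displayed far-region integral, which should read $r^{N-(\alpha+1)s'-1}$ rather than $r^{(\alpha+1-N)s'-1}$; the mechanism you describe and your stated conclusion $|x-y|^{N/s'-\alpha}\log^{\beta^+}(2e/|x-y|)$ are nonetheless correct.
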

\begin{proof}
(i) Let $x,y\in B$, $x\neq y$. We decompose
$$
I_{\alpha, \beta} f(x)=\Big\{\int\limits_{|z|<\rho}+\int\limits_{\substack{|z|>\rho\\ |z-x|<2|x-y|}}+\int\limits_{\substack{|z|>\rho\\ |z-x|\geq 2|x-y|}} \Big\} K_{\alpha, \beta}(z-x)f(z) dz,
$$
$$
I_{\alpha, \beta} f(y)=\Big\{\int\limits_{|z|<\rho}+\int\limits_{\substack{|z|>\rho\\ |z-x|<2|x-y|}}+\int\limits_{\substack{|z|>\rho\\ |z-x|\geq 2|x-y|}} \Big\} K_{\alpha, \beta}(z-y)f(z) dz,
$$
so that we estimate
\begin{equation}\label{rr0}
\big|I_{\alpha, \beta} f(x)-I_{\alpha, \beta} f(y)  \big| \leq A_1+A_2+A_3+A_4,
\end{equation}
where
$$
\begin{aligned}
A_1&= \int\limits_{\substack{|z|>\rho\\ |z-x|<2|x-y|}} K_{\alpha, \beta}(z-x) |f(z)| dz,\\[0.1in]
A_2&= \int\limits_{\substack{|z|>\rho\\ |z-x|<2|x-y|}} K_{\alpha, \beta}(z-y) |f(z)| dz,\\[0.1in]
A_3&= \int\limits_{\substack{|z|>\rho\\ |z-x|\geq 2|x-y|}} \big| K_{\alpha, \beta}(z-x)-K_{\alpha, \beta}(z-y)\big|  |f(z)| dz,\\[0.2in]
A_4&= \int\limits_{|z|<\rho} \big| K_{\alpha, \beta}(z-x)-K_{\alpha, \beta}(z-y)\big|  |f(z)| dz.
\end{aligned}
$$
To estimate $A_1$ we use H\"older's inequality and we find
\begin{equation}\label{rr1}
\begin{aligned}
A_1&\leq \left( \,\int\limits_{|z-x|<2|x-y|} |z-x|^{-\alpha s'}\log^{\beta^+ s'}\left(\frac{2e}{|z-x|}\right) dz \right)^{\frac{1}{s'}}\|f\|_{L^s(B_1\setminus B_\rho)}\\[0.1in]
&=  C \left( \int\limits_0^{2|x-y|} t^{N-\alpha s'} \log^{\beta^+ s'}\left(\frac{2e}{t}\right) \frac{dt}{t}\right)^{\frac{1}{s'}}\|f\|_{L^s(B_1\setminus B_\rho)}\\[0.1in]
&\leq  C |x-y|^{\frac{N}{s'}-\alpha } \log^{\beta^+ }\left(\frac{2e}{|x-y|}\right) \|f\|_{L^s(B_1\setminus B_\rho)}.
\end{aligned}
\end{equation}
With the same method and using the fact that
$$
|z-x|<2|x-y|\Longrightarrow |z-y|\leq |z-x|+|x-y|<3|x-y|,
$$
we have 
\begin{equation}\label{rr2}
\begin{aligned}
A_2&\leq \int\limits_{|z-y|<3|x-y|} K_{\alpha, \beta}(z-y) |f(z)| dz \\[0.1cm]
& \leq  C |x-y|^{\frac{N}{s'}-\alpha } \log^{\beta^+ }\left(\frac{2e}{|x-y|}\right) \|f\|_{L^s(B_1\setminus B_\rho)}.
\end{aligned}
\end{equation}
Next, we estimate $A_3$. By the Mean Value Theorem there exists $\zeta\in [x, y]\subset B$ such that 
$$
\big| K_{\alpha, \beta}(z-x)-K_{\alpha, \beta}(z-y)\big|= |x-y|  \big| \nabla K_{\alpha, \beta}(z-\zeta)\big|.
$$
Note that 
$$
\big| \nabla K_{\alpha, \beta}(z-\zeta)\big| \leq C|z-\zeta|^{-\alpha-1}\log^{\beta^+}\frac{2e}{|z-\zeta|},
$$
and from $\zeta\in [x, y]$ and $|z-x|\geq 2|x-y|$ one gets
$$
|z-\zeta|\geq |z-x|-|x-\zeta|\geq |z-x|-|x-y|\geq \frac{|z-x|}{2}.
$$
Hence,
$$
\big| \nabla K_{\alpha, \beta}(z-\zeta)\big| \leq C|z-x|^{-\alpha-1}\log^{\beta^+}\frac{4e}{|z-x|}.
$$
By H\"older's inequality and the fact that $\frac{N}{s'}<\alpha+1$ we estimate
\begin{equation}\label{rr3}
\begin{aligned}
A_3&\leq |x-y| \int\limits_{\substack{|z|>\rho\\ |z-x|\geq 2|x-y|}}  
|z-x|^{-\alpha-1}\log^{\beta^+}\frac{4e}{|z-x|}|f(z)| dz\\[0.1in]
&\leq  C |x-y| \left(\, \int\limits_{2|x-y|}^\infty  t^{N-(\alpha+1) s'} \log^{\beta^+ s'}\left(\frac{4e}{t}\right) \frac{dt}{t}\right)^{\frac{1}{s'}}\|f\|_{L^s(B_1\setminus B_\rho)}\\[0.1in]
&\leq  C |x-y|^{\frac{N}{s'}-\alpha } \log^{\beta^+ }\left(\frac{2e}{|x-y|}\right) \|f\|_{L^s(B_1\setminus B_\rho)}.
\end{aligned}
\end{equation}
Finally, to estimate $A_4$ we use again the Mean Value Theorem and obtain (see Figure \ref{f1} below):
$$
\big| K_{\alpha, \beta}(z-x)-K_{\alpha, \beta}(z-y)\big|= |x-y|  \big| \nabla K_{\alpha, \beta}(z-\xi)\big|   \quad\mbox{ for some } \xi\in [x, y]\subset B.
$$
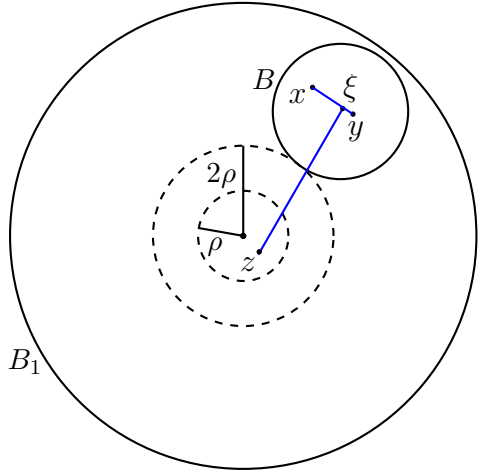
\begin{figure}[ht]
\begin{center}
\begin{tikzpicture}[scale=1.0]
    \draw[thick] (0,0) circle (3.1cm);

    \draw[dashed,thick] (0,0) circle (0.6cm);
    \draw[dashed,thick] (0,0) circle (1.2cm);
    \filldraw (0,0) circle (1pt);
    \draw[thick] (0,0) -- (90:1.2cm);
    \draw[thick] (0,0) -- (170:0.6cm);
    \node at (110:0.85cm) {\small $2\rho$};
    \node at (200:0.4cm) {\small $\rho$};
    \draw[thick] (52:2.1cm) circle (0.9cm);
    \draw[color=blue,thick] (0.2,-0.25) -- (52:2.15cm);
    \node at (100:-0.38cm) {$z$};
    \node at (82.5:2.1cm) {\small $B$};
    \node at (68.75:2.0cm) {$x$};
    \node at (43:2.05cm) { $y$};
    \node at (54:2.43cm) { $\xi$};
    \node at (-150:3.35cm) {\small $B_1$};
    \filldraw (48:2.18cm) circle (0.8pt);
    \filldraw (52:2.15cm) circle (0.8pt);
    \filldraw (65:2.18cm) circle (0.8pt);
    \filldraw (135:-0.3cm) circle (0.8pt);
    \draw[color=blue,thick] (48:2.18cm) -- (65:2.18cm);
\end{tikzpicture}
\end{center}
    \caption{The ball $B\subset B_1$ and $\xi\in [x,y]$.}
    \label{f1}
\end{figure}
Since $\xi\in B$ and $|\xi|>2\rho$ one has
$$
|z|<\rho<2\rho<|\xi|\Longrightarrow |z-\xi|\geq |\xi|-|z|>\rho,
$$
and then
$$
\big| \nabla K_{\alpha, \beta}(z-\xi)\big| \leq C|z-\xi|^{-\alpha-1}\log^{\beta^+}\frac{2e}{|z-\xi|}\leq C(\rho)<\infty. 
$$
This yields
\begin{equation}\label{rr4}
\begin{aligned}
A_4&\leq |x-y| \|f\|_{L^1( B_\rho)}.
\end{aligned}
\end{equation}
Using now the estimates \eqref{rr1}-\eqref{rr4} into \eqref{rr0} we deduce \eqref{rl1}.\\
(ii) We apply part (i) above for $B=B_1$ and $\rho=0$.
\end{proof}

\section{Proof of Theorem \ref{thm1}}
Let $0\leq u\in \mathcal{C}^2(B_1\setminus\{0\})$ be a solution of \eqref{h1}. 
Taking the spherical average in \eqref{h1} we obtain:
\[\Delta \overline{u} = \frac{1}{r^{N - 1}}(r^{N - 1}\overline{u}')' \leq V(r)\overline{u} - \overline f\quad \text{for all } 0 < r < 1.\]
It follows that 
\begin{equation}\label{test1}
    (r^{N-1}\overline{u}')' \leq r^{N-1}V(r) \overline{u} - r^{N-1}\overline{f} \quad \text{for all } 0 < r < 1.
\end{equation}
We take $0 < R < 1$ small such that
\begin{equation}\label{dini0}
\left\{\begin{aligned}
    &\int_0^R s\log\frac{1}{s}V(s) ds < \frac{1}{2}\quad&\text{if }N = 2,\\[0.1cm]
    &\int_0^R sV(s) ds <  \frac{1}{2} \quad&\text{if }N \geq 3.
\end{aligned}\right.
\end{equation}
For all $0 < r \leq  R$, we define
$$g_R(r) = \int_{B_R \setminus B_r} V(|x|)u(x)dx = \sigma_N\int_r^{R}s^{N - 1}V(s)\overline{u}(s)ds.$$
Integrating \eqref{test1} over $[r, R]$ we find
$$
\begin{aligned}
    -r^{N-1}\overline{u}' &\leq \int_r^{R}s^{N - 1}V(s)\overline{u}(s)ds - \int_r^R s^{N-1}\overline{f}(s) ds + c \\[0.2cm]
    &\leq  g_R(r) - \int_r^R s^{N-1}\overline{f}(s) ds + c,
\end{aligned}
$$
where $c > 0$. Since $f \in L^1_{loc}(B_1)$,  for all $0 < r \leq R$, we have $\int_r^R s^{N-1}\overline{f}(s) ds < \infty$. Thus, there exists a constant $C > 0$ such that 
\begin{equation}\label{test2}
     -r^{N-1}\overline{u}' \leq g_R(r) + C \quad\text{for all } 0 < r \leq R.
\end{equation}
Then, for all $0 < r \leq R$ we have
\begin{equation}\label{test3}
  -\overline{u}'(r) \leq r^{1-N}g_R(r) + Cr^{1 - N}.
\end{equation} 
We use the fact that $g_R(r)$ is a non-increasing function since 
$$
\frac{dg_R(r)}{dr} = -r^{N-1}V(r)\overline{u}(r) \leq 0.
$$
Integrating \eqref{test3} over $[r, R]$ again, we deduce:
\begin{align}\label{test4}
   \overline{u}(r) &\leq \int_r^{R}s^{1-N}g_R(s)ds + C\int_r^{R}s^{1-N}ds+c \nonumber\\[0.2cm]
   &\leq g_R(r)\int_r^{R}s^{1-N}ds + C\int_r^{R}s^{1-N}ds+c \nonumber\\[0.2cm]
   &\leq \left\{
     \begin{aligned}
      &\left(\log\frac{1}{r}\right)g_R(r) + C\log\frac{1}{r} &\quad\text{if } N = 2,\\[0.2cm]
      &\frac{r^{2-N}}{N - 2}g_R(r) + Cr^{2-N} &\quad\text{if } N \geq 3.
  \end{aligned}\right.
\end{align}
It follows that
\begin{align*}
rV(r)\overline{u}(r) &\leq g_R(r) r\left(\log\frac{1}{r}\right)V(r) + Cr\left(\log\frac{1}{r}\right)V(r) &\text{if } N = 2,\\
r^{N-1}V(r)\overline{u}(r) &\leq \frac{g_R(r)}{N - 2}rV(r) + CrV(r) &\text{if } N \geq 3.
\end{align*}
Thus, by integrating the above inequalities over $[r, R]$, we estimate:
\begin{equation}\label{test5}
    g_R(r) \leq \left\{
    \begin{aligned}
        &(g_R(r) + C)\int_r^{R}s\log\frac{1}{s}V(s)ds &\text{if } N = 2,\\[0.2cm]
        &\left(\frac{g_R(r)}{N - 2} + C\right)\int_r^{R}sV(s)ds &\text{if } N \geq 3.
    \end{aligned}\right.
\end{equation}
By \eqref{dini0}, we conclude that there exists constants $C_1, C_2 > 0$ such that 
\[g_R(r) \leq \frac{1}{2} g_R(r) + C_1 \quad\text{for all }  0 < r \leq R.\]
It follows that $g_R$ is finite, so $V(|x|)u\in L^1_{loc}(B_1)$. From \eqref{test4} we deduce $u \in L^1_{loc}(\B1)$ and
\begin{equation}\label{test6}
    \overline{u}(r) \leq \left\{
    \begin{aligned}
          &C\log \frac{1}{r}\quad &\text{if } N = 2,\\
          &C r^{2-N}\quad &\text{if } N \geq 3.
    \end{aligned}\right.
\end{equation}
Let $F:=-V(|x|)u+f$. Then $F\in L^1_{loc}(B_1)$ and $-\Delta u\geq F$ in $B_1\setminus\{0\}$. Using Theorem A, we deduce that $\Delta u\in L^1_{loc}(B_1)$ and $u$ satisfies \eqref{h} for some $h\in L^1_{loc}(B_1)$ and $c\geq 0$.
\medskip\\
\noindent{\bf Proof of Corollary \ref{cor1}}
 Let $V(|x|) = |x|^{-\gamma}\log^{\tau}\frac{e}{|x|}$. In light of Corollary \ref{cor0}, our conclusion is equivalent to:
\begin{equation}\label{cor0eq}
    \left\{\begin{aligned}
    &\int_0^1 s\log\frac{1}{s}V(s) ds = c \int_0^1 s^{1-\gamma}\log^{1 + \tau}\frac{e}{s} ds < \infty \quad&\text{if }N = 2,\\
    &\int_0^1 sV(s) ds = c\int_0^1 s^{1-\gamma}\log^{\tau}\frac{e}{s} ds < \infty \quad&\text{if }N \geq 3.
    \end{aligned}\right.
\end{equation} 
The dominant factor near $s = 0^+$ of \eqref{cor0eq} is $s^{1-\gamma}$. If $\gamma>2$ the integrals in \eqref{cor0eq} are divergent. If $\gamma < 2$, it is easy to see that the integrals in \eqref{cor0eq} are convergent. In the case $\gamma = 2$, let $t = \log\frac{e}{s}$, then $ds = -e^{1-t}dt$. From \eqref{cor0eq}, we have:
\begin{equation*}
\left\{\begin{aligned}
    &\int_0^1 s^{1-\gamma}\log^{1+\tau}\frac{e}{s} ds = e\int_{1}^{\infty}t^{1+\tau} dt < \infty &\quad\text{if }N = 2, \gamma = 2&\text{ and }\tau < -2,\\
    &\int_0^1 s^{1-\gamma}\log^{\tau}\frac{e}{s} ds = e\int_{1}^{\infty} t^{\tau} dt < \infty &\quad\text{if }N \geq 3, \gamma = 2&\text{ and }\tau < -1.
\end{aligned}\right.
\end{equation*}
This concludes the proof.\qed

\section{Proof of Theorem \ref{thm2}}

The proof of Theorem \ref{thm2} is essentially based on the following result.
\begin{lemma}\label{lm_3}
    Let $N \geq 2$ and $g \in \mathcal{C}(B_1 \setminus \{0\})$ with 
    \begin{equation}\label{gint}
    \int_{B_{\frac{1}{2}}} g(x) dx = \infty.
    \end{equation}
    Then, the inequality
    \begin{equation}\label{lm_2iq0}
        -\Delta u \geq g \quad\text{in } B_1\setminus\{0\}
    \end{equation}
    has no solutions $0\leq u \in \mathcal{C}^2(B_1\setminus\{0\})$.
\end{lemma}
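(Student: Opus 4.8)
The plan is to reduce \eqref{lm_2iq0} to a one–dimensional differential inequality by spherical averaging --- exactly the device used in the proof of Theorem~\ref{thm1} --- and then to integrate twice to force the spherical average $\overline u$ to become negative near the origin, contradicting $u\ge 0$.

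Suppose, for contradiction, that $0\le u\in\mathcal{C}^2(B_1\setminus\{0\})$ solves \eqref{lm_2iq0}. For every $0<r<1$ the sphere $\partial B_r$ lies in $B_1\setminus\{0\}$, so $\overline u(r)$ and $\overline g(r)$ are well defined, $\overline u\in\mathcal{C}^2((0,1))$, and averaging the polar–coordinate expression $\Delta u=\overline u''+\frac{N-1}{r}\overline u'+\frac1{r^2}\Delta_{S^{N-1}}u$ (the last term integrating to zero over the closed sphere $\partial B_r$) gives
\[
\bigl(r^{N-1}\overline u'(r)\bigr)'=r^{N-1}\,\overline{\Delta u}(r)\le -\,r^{N-1}\overline g(r)\qquad\text{for all }0<r<1,
\]
which is the $V\equiv 0$ analogue of \eqref{test1}. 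On the other hand, the hypothesis \eqref{gint} together with polar coordinates gives $\int_{B_{1/2}}g\,dx=\sigma_N\int_0^{1/2}s^{N-1}\overline g(s)\,ds=\infty$; here the fact that the left–hand integral is assumed to equal $+\infty$ also forces $\int_{B_{1/2}}g^-<\infty$, so that $\int_0^{1/2}s^{N-1}\overline g(s)\,ds$ is genuinely divergent to $+\infty$.

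Integrating $\bigl(s^{N-1}\overline u'(s)\bigr)'\le -s^{N-1}\overline g(s)$ over $[r,\tfrac12]$, for $0<r<\tfrac12$, we obtain
\[
r^{N-1}\overline u'(r)\ \ge\ \Bigl(\tfrac12\Bigr)^{N-1}\overline u'\!\Bigl(\tfrac12\Bigr)+\int_r^{1/2}s^{N-1}\overline g(s)\,ds\ \longrightarrow\ +\infty\qquad\text{as }r\to 0^+,
\]
since the constant term is finite ($u\in\mathcal{C}^2$ near $\partial B_{1/2}$) and the integral diverges. Hence there is $r_1\in(0,\tfrac12)$ such that $\overline u'(r)\ge r^{1-N}$ for all $0<r<r_1$. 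A second integration over $[r,r_1]$ yields
\[
\overline u(r)\ \le\ \overline u(r_1)-\int_r^{r_1}s^{1-N}\,ds\ \longrightarrow\ -\infty\qquad\text{as }r\to 0^+,
\]
because $\int_r^{r_1}s^{1-N}\,ds$ equals $\log(r_1/r)$ when $N=2$ and $\frac{1}{N-2}\bigl(r^{2-N}-r_1^{2-N}\bigr)$ when $N\ge3$, and both tend to $+\infty$. This contradicts $\overline u\ge0$, which holds since $u\ge0$, and completes the proof.

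The argument is short; the only points needing a moment's care are the validity of the averaged identity $\overline{\Delta u}=\frac1{r^{N-1}}(r^{N-1}\overline u')'$ on the punctured ball --- legitimate because it is a purely local computation on spheres avoiding the origin, and is already invoked in the proof of Theorem~\ref{thm1} --- and the translation of $\int_{B_{1/2}}g\,dx=\infty$ into the divergence of the weighted radial integral $\int_0^{1/2}s^{N-1}\overline g(s)\,ds$. I do not anticipate a genuine obstacle.
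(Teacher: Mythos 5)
Your proof is correct and follows essentially the same route as the paper: spherical averaging to get $(r^{N-1}\overline u')'\le -r^{N-1}\overline g$, then two radial integrations using $u\ge 0$ and the divergence of $\int_r^{1/2}s^{N-1}\overline g(s)\,ds$. The only cosmetic difference is that you extract a pointwise bound $\overline u'(r)\ge r^{1-N}$ and conclude $\overline u(r)\to-\infty$, whereas the paper keeps the iterated integral and applies l'H\^opital's rule to force $\overline u(R)=\infty$; the substance is identical.
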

\begin{proof}
    Assume by contradiction that \eqref{gint} holds and that \eqref{lm_2iq0} has a solution $u\geq 0$. Taking the spherical average, we have:
    \[-\Delta \overline{u} \geq \overline{g}(r)\quad \text{for all } 0 < r < 1.\]
    It follows that
    \begin{align*}
        -(r^{N-1}\overline{u}')' \geq r^{N - 1}\overline{g}(r) \quad \text{for all } 0 < r < 1.  
    \end{align*}
    Fix $0 < R \leq \frac{1}{2}$ small. For all $0 < r < R$, integrate the above inequality over $[r, R]$ to obtain: 
    \begin{equation*}
        r^{N-1}\bar u'(r) \geq \int_r^R s^{N-1}\overline{g}(s) ds -c_1 \quad\text{for some } c_1 > 0.
    \end{equation*}
    It follows that
    \begin{equation}\label{lVm0}
    \overline{u}'(r) \geq r^{1-N}\int_r^R s^{N-1}\overline{g}(s) ds - c_1 r^{1-N} \quad\text{for all } 0 < r \leq R.
    \end{equation}
    By integrating \eqref{lVm0} over $[r,R]$ again, we divide our proof into two cases.\\
    In the case $N = 2$, we obtain:
    \begin{align}\label{lVN2}
        \overline{u}(R) \geq \overline{u}(R) - \overline{u}(r) &\geq c_1(\log r - \log R) + \int_r^R s^{-1}\int_s^R t \overline{g}(t)dt ds \nonumber\\[0.1in]
        &= C_1 - c_1\log \frac{1}{r} + \int_r^R s^{-1}\int_s^R t \overline{g}(t)dt ds \nonumber\\[0.1in]
        & = C_1 + \log \frac{1}{r} \left\{\frac{\displaystyle \int_r^R s^{-1}\int_s^R t \overline{g}(t)dt ds}{-\log r} - c_1\right\}.
    \end{align}
    By l'H\^{o}pital's rule, we have
    $$
    \begin{aligned}
    \lim\limits_{r \to 0}\frac{\displaystyle\int_r^R s^{-1}\int_s^R t \overline{g}(t)dt ds}{-\log r} & = \lim\limits_{r \to 0}\frac{\displaystyle -r^{-1}\int_r^R s \overline{g}(s)ds}{-r^{-1}}   \\[0.2cm]
    &= \frac{1}{2\pi} \lim\limits_{r \to 0}\int_{r < |x| < R}g(x) dx = \infty.
    \end{aligned}
    $$
    Thus, passing to the limit with $r\to 0$ in \eqref{lVN2} we find $\overline u(R)=\infty$, contradiction. \\
    In the case $N \geq 3$, we obtain from \eqref{lVm0} that:
    \begin{align}\label{lVN3}
        \overline{u}(R) \geq \overline{u}(R) - \overline{u}(r) &\geq \frac{c_1}{N-2}(R^{2-N} - r^{2-N}) + \int_r^R s^{1-N}\int_s^R t^{N-1}\overline{g}(t)dt ds \nonumber\\[0.1in]
        &= C_1 - C_2r^{2-N} + \int_r^R s^{1-N}\int_s^R t^{N-1}\overline{g}(t)dt ds \nonumber\\[0.1in]
        & = C_1 + r^{2-N}\left\{\frac{\displaystyle \int_r^R s^{1-N}\int_s^R t^{N-1}\overline{g}(t)dt ds}{r^{2-N}} -C_2\right\}.
    \end{align}
    By l'H\^{o}pital's rule, we have 
    $$
    \begin{aligned}
    \lim\limits_{r \to 0}\frac{\displaystyle \int_r^R s^{1-N}\int_s^R t^{N-1}\overline{g}(t)dt ds}{r^{2-N}} & = \lim\limits_{r \to 0}\frac{\displaystyle -r^{1-N}\int_r^R s^{N-1}\overline{g}(s)ds}{(2-N)r^{1-N}}\\[0.2cm]
    &= \frac{1}{(N-2)\sigma_N} \lim\limits_{r \to 0}\int_{r < |x| < R}g(x) dx = \infty.
    \end{aligned}
    $$
    To raise a contradiction, it remains to let $r\to 0$ in \eqref{lVN3} to derive $\overline u(R)=\infty$. 
    Hence, the inequality \eqref{lm_2iq0} has no solutions $0\leq u \in \mathcal{C}^2(B_1 \setminus \{0\})$.
\end{proof}
\begin{cor} 
Let $V, f\in \mathcal{C}( B_1\setminus\{0\})$ be such that $\int_{B_{\frac{1}{2}}}f(x) dx=\infty$. Then, the inequality 
    \begin{equation}\label{coriq0}
        -\Delta u + V(x) u \geq f \quad\text{in } B_1\setminus\{0\},
    \end{equation}
    has no solutions $0\leq u\in \mathcal{C}^2(B_1\setminus \{0\})$ with the property $V u \in L^1_{loc}(B_1)$.
\end{cor}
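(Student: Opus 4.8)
The plan is to reduce the statement directly to Lemma~\ref{lm_3}, with essentially no extra analytic work. I would argue by contradiction: assume $0\le u\in\mathcal{C}^2(B_1\setminus\{0\})$ solves \eqref{coriq0} and satisfies $Vu\in L^1_{loc}(B_1)$. The first step is to set $g:=f-V(x)u$ on $B_1\setminus\{0\}$. Then $g\in\mathcal{C}(B_1\setminus\{0\})$, since $u\in\mathcal{C}^2(B_1\setminus\{0\})$ and $V,f$ are continuous there, and \eqref{coriq0} rewrites as $-\Delta u\ge g$ in $B_1\setminus\{0\}$; thus $u$ would be a nonnegative $\mathcal{C}^2$ solution of \eqref{lm_2iq0} with right-hand side $g$.

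The second step is to check that this $g$ still satisfies the divergence hypothesis \eqref{gint}, i.e. $\int_{B_{1/2}}g\,dx=\infty$, interpreted (as in the proof of Lemma~\ref{lm_3}) as $\lim_{r\to0^+}\int_{r<|x|<1/2}g\,dx=\infty$. For each fixed $0<r<\tfrac{1}{2}$ the annulus $\{r<|x|<\tfrac{1}{2}\}$ is a compact subset of $B_1\setminus\{0\}$, so one may split
$$
\int_{r<|x|<1/2}g\,dx=\int_{r<|x|<1/2}f\,dx-\int_{r<|x|<1/2}V(x)u\,dx .
$$
Since $Vu\in L^1_{loc}(B_1)$, the second term converges to the finite number $\int_{B_{1/2}}V(x)u\,dx$ as $r\to0^+$ (apply dominated convergence to the positive and negative parts of $Vu$ separately), while the first term tends to $\int_{B_{1/2}}f\,dx=\infty$ by hypothesis. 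Subtracting a quantity with a finite limit from one that tends to $+\infty$ yields $\lim_{r\to0^+}\int_{r<|x|<1/2}g\,dx=\infty$, which is exactly \eqref{gint} for $g$.

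The final step is to invoke Lemma~\ref{lm_3} with this $g$: it asserts that $-\Delta u\ge g$ in $B_1\setminus\{0\}$ has no nonnegative solution in $\mathcal{C}^2(B_1\setminus\{0\})$, contradicting the existence of $u$. Hence \eqref{coriq0} admits no such solution.

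I do not expect a genuine obstacle here: all the analytic content sits in Lemma~\ref{lm_3}, and the corollary is a bookkeeping matter. The one point to be careful about is that $g=f-Vu$ need not have a definite sign, but Lemma~\ref{lm_3} does not require one; and the local integrability assumption on $Vu$ is used exactly once, precisely to ensure that subtracting $Vu$ from $f$ does not destroy the divergence of the integral.
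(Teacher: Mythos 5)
Your proposal is correct and follows exactly the paper's own argument: set $g=f-Vu$, use $Vu\in L^1_{loc}(B_1)$ together with $\int_{B_{1/2}}f\,dx=\infty$ to verify the divergence hypothesis \eqref{gint} for $g$, and then invoke Lemma \ref{lm_3} to reach a contradiction. The extra care you take in interpreting the divergent integral as a limit over annuli is a harmless elaboration of the same proof.
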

\begin{proof} Assume by contradiction that such a solution exists. 
    We apply Lemma \ref{lm_3} for $g(x) = f(x) - V(x) u(x)$. Since $V u \in L^1_{loc}(B_1)$ and $\int_{B_{\frac{1}{2}}}f(x) dx=\infty$, it follows that
    \[\int_{B_{\frac{1}{2}}}g(x) = \int_{B_{\frac{1}{2}}} f(x)dx - \int_{B_{\frac{1}{2}}}V(x)u(x)dx = \infty.\]
    Therefore, we raise a contradiction and conclude the proof.
\end{proof}
\noindent{\bf Proof of Theorem \ref{thm2} completed.} Let $g=(\K1*u^p)u^q-Vu$. Since $Vu\in L^1_{loc}(B_1)$ we have either $g\in L^1_{loc}(B_1)$ or $\int_{B_{\frac{1}{2}}}g(x)dx=\infty$. In the latter case we apply Lemma \ref{lm_3} to deduce that \eqref{eq0} has no solutions $u\geq 0$. Thus, $g\in L^1_{loc}(B_1)$. Since $Vu\in L^1_{loc}(B_1)$ we deduce $(\K1*u^p)u^q \in L^1_{loc}(B_1)$. \\
Furthermore, since $g\in L^1_{loc}(B_1)$ we can also apply Theorem A to deduce \eqref{eq1d}. This concludes our proof.\qed

\section{A new sub and super-solution method in a nonlocal setting}\label{fss}

In this section we devise a new sub and super-solution method for the semilinear equation with non-local term:
\begin{equation}\label{eqVV}
-\Delta u+V(x) u=(K_{\alpha, \beta}*u^p)u^q\quad\mbox{ in }B_1\setminus\{0\}.
\end{equation}
We point out that our method is different from that recently obtained in \cite{BC25} for the Choquard equation (that is, $q=p-1$ and $\beta=0$) in annular or exterior domains.

\begin{prop}\label{ps}
Assume $N\geq 2$, $p, q>0$, $0\leq V\in \mathcal{C}^{0, \nu}(\overline B_1\setminus\{0\})$ and there exists $\widetilde u, \widetilde U\in \mathcal{C}^2(\overline B_1\setminus\{0\})\cap L^p(B_1)$ such that:
\begin{enumerate}
\item[(i)] $0<\widetilde u\leq \widetilde U$ in $B_1\setminus\{0\}$ and $K_{\alpha, \beta}*\widetilde U^p$ is locally bounded in $B_1\setminus\{0\}$,
\item[(ii)] for all $k\geq 3$ we have
\begin{equation}\label{lap2}
-\Delta \widetilde u(x)+ V(x) \widetilde u(x) \leq 
\left(\,\int\limits_{\frac{1}{k}<|y|<1} K_{\alpha, \beta}(x-y)\widetilde u^p(y) dy\right) \widetilde u^q(x) \quad \mbox{ in }  B_1\setminus B_{\frac{1}{k}},
\end{equation}
\item[(iii)] there holds:
\begin{equation}\label{lap20}
-\Delta \widetilde U(x)+ V(x) \widetilde U(x) \geq 
\left(\,\int\limits_{0<|y|<1} K_{\alpha, \beta}(x-y)\widetilde U^p(y) dy\right) \widetilde U^q(x) \quad \mbox{ in }  B_1\setminus \{0\}. 
\end{equation}
\end{enumerate}
Then, there exists a solution $u\in \mathcal{C}^2(B_1\setminus\{0\})$ of \eqref{eqV} such that $\widetilde u\leq u\leq \widetilde U$ in $B_1\setminus\{0\}$.
\end{prop}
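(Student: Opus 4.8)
The plan is to construct the solution $u$ as the limit of an increasing sequence of solutions to approximating problems on annuli $A_k := B_1 \setminus \overline{B}_{1/k}$, $k \geq 3$, where the nonlocal term is frozen at the previous iterate. Specifically, starting from $u_0 := \widetilde u$, I would solve iteratively the \emph{linear} boundary value problems
\begin{equation*}
\begin{cases}
-\Delta u_n + V(x) u_n = \left(\displaystyle\int_{1/k < |y| < 1} K_{\alpha, \beta}(x - y)\, u_{n-1}^p(y)\, dy\right) u_{n-1}^q(x) & \text{in } A_k,\\[0.2cm]
u_n = \widetilde u & \text{on } \partial B_{1/k},\\[0.1cm]
u_n = \widetilde u & \text{on } \partial B_1,
\end{cases}
\end{equation*}
for each fixed $k$. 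The right-hand side here is a fixed continuous (indeed Hölder, by the regularity of $\widetilde u$ and $V$) function on $\overline{A}_k$, so standard linear elliptic theory gives a unique $u_n \in \mathcal{C}^2(A_k) \cap \mathcal{C}(\overline{A}_k)$. By the weak maximum principle, using hypothesis (ii) for the base step $u_1 \geq \widetilde u = u_0$, and then monotonicity of $t \mapsto t^p, t \mapsto t^q$ together with the positivity of $K_{\alpha,\beta}$ to propagate $u_{n-1} \leq u_n \Rightarrow u_n \leq u_{n+1}$, the sequence is nondecreasing. For the upper bound, hypothesis (iii) shows $\widetilde U$ is a supersolution of each frozen problem (here one uses that the integral over $\{1/k < |y| < 1\}$ is dominated by the integral over $\{0 < |y| < 1\}$, and $u_{n-1} \leq \widetilde U$), so $u_n \leq \widetilde U$ on $A_k$ for all $n$. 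Hence $u_n \uparrow u_k$ pointwise with $\widetilde u \leq u_k \leq \widetilde U$ on $A_k$.

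Next I would upgrade this convergence to $\mathcal{C}^2_{loc}(A_k)$. The key point is uniform local control of the right-hand sides: on any compact $\mathcal{K} \Subset A_k$, the convolution $\int_{1/k<|y|<1} K_{\alpha,\beta}(x-y) u_n^p(y)\,dy$ is bounded uniformly in $n$ and $x \in \mathcal K$ because $u_n \leq \widetilde U \in L^p(B_1)$ and $K_{\alpha,\beta}$ is locally bounded away from its singularity — more precisely one applies Lemma~\ref{rieszlog}(i) with the ball centered in $\mathcal K$ to get uniform Hölder bounds on $I_{\alpha,\beta}(u_n^p)$, using $\|u_n^p\|_{L^1(B_1)} \leq \|\widetilde U^p\|_{L^1(B_1)}$. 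Since also $V u_n$ and $u_n^q$ are uniformly bounded on $\mathcal K$, interior Schauder estimates (after a further Hölder bootstrap, noting $u_n$ solves $-\Delta u_n = g_n$ with $g_n$ uniformly bounded, hence $u_n$ uniformly $\mathcal{C}^{1,\gamma}$, hence $g_n$ uniformly $\mathcal{C}^{0,\gamma}$, hence $u_n$ uniformly $\mathcal{C}^{2,\gamma}$) give compactness in $\mathcal{C}^2_{loc}(A_k)$. Passing to the limit, $u_k \in \mathcal{C}^2(A_k)$ solves $-\Delta u_k + V u_k = \big(\int_{1/k<|y|<1} K_{\alpha,\beta}(x-y) u_k^p(y)\,dy\big) u_k^q$ in $A_k$, with $\widetilde u \leq u_k \leq \widetilde U$.

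Finally I would let $k \to \infty$. A comparison argument on the overlapping annuli shows the $u_k$ are monotone in $k$ (the $k$-problem has a "larger" nonlocal term and a boundary datum on $\partial B_{1/k}$ that sits further in, so by the same maximum-principle reasoning $u_{k+1} \geq u_k$ on $A_k$), and they stay trapped between $\widetilde u$ and $\widetilde U$; hence $u_k \uparrow u$ pointwise on $B_1 \setminus \{0\}$ with $\widetilde u \leq u \leq \widetilde U$. Repeating the Lemma~\ref{rieszlog} plus Schauder bootstrap — now the convolution $\int_{1/k<|y|<1} K_{\alpha,\beta}(x-y) u_k^p(y)\,dy \to \int_{0<|y|<1} K_{\alpha,\beta}(x-y) u^p(y)\,dy$ by monotone convergence, and the limit is finite and locally bounded on $B_1\setminus\{0\}$ since $u^p \leq \widetilde U^p \in L^1(B_1)$ with $K_{\alpha,\beta}*\widetilde U^p$ locally bounded — yields $\mathcal{C}^2_{loc}(B_1\setminus\{0\})$ convergence and shows $u$ solves \eqref{eqVV} in $B_1 \setminus \{0\}$.

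The main obstacle I anticipate is the regularity/compactness step: obtaining estimates on the nonlocal term $K_{\alpha,\beta} * u_n^p$ that are \emph{uniform in $n$ and $k$} and strong enough (Hölder, not merely $L^\infty$) to feed Schauder estimates, while only knowing the uniform bound $u_n \leq \widetilde U \in L^p(B_1)$. This is exactly what Lemma~\ref{rieszlog} is designed for — it converts $L^1$ (and $L^s$) control of $u_n^p$ into Hölder control of the potential on compact subsets away from the origin — so the argument hinges on invoking it correctly with constants independent of the truncation parameters. A secondary technical point is justifying the two monotonicity claims (in $n$ and in $k$) rigorously via the maximum principle in the nonlocal setting; the freezing of the convolution at the previous iterate is precisely the device that keeps each individual subproblem linear and makes the comparison principle applicable.
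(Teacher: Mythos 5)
Your proposal is correct and follows essentially the same route as the paper: a monotone iteration on the truncated annuli $B_1\setminus B_{\frac1k}$ with the convolution term frozen at the previous iterate, trapped between $\widetilde u$ and $\widetilde U$ via the maximum principle and hypotheses (ii)--(iii), followed by the Lemma \ref{rieszlog} plus elliptic regularity/Schauder bootstrap to get $\mathcal{C}^2$ convergence of the limit. The only difference is organizational: the paper runs a single sequence in which the index $k$ simultaneously enlarges the annulus and updates the frozen convolution (so it never has to compare solutions of two distinct nonlocal problems), whereas your double iteration needs the extra monotonicity-in-$k$ step, which should be justified by comparing the frozen iterates for consecutive $k$ rather than by a direct maximum principle applied to the two nonlocal equations.
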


\begin{proof}
We construct a sequence $\{u_k\}_{k\geq 3}$ as follows:\\
We let $u_3:=\widetilde u$ and for all $k\geq 3$ we define $u_{k+1}$ inductively as follows:
\begin{equation}\label{lapk}
\begin{cases}
\displaystyle -\Delta u_{k+1}+ V(x) u_{k+1}=\left(\,\int\limits_{\frac{1}{k}<|y|<1} K_{\alpha, \beta}(x-y)u_k^p(y) dy\right) u_k^q  &\quad \mbox{ in }  B_1\setminus B_{\frac{1}{k}} ,\\[0.2cm]
u_{k+1}=\widetilde u & \quad \mbox{ on }  \partial( B_1\setminus B_{\frac{1}{k}}).
\end{cases}    
\end{equation}
For convenience, we extend $u_{k+1}=\widetilde u$ to $B_{\frac{1}{k}}\setminus\{0\}$.

\noindent{\bf Step 1:} $u_k$ is well defined and $\widetilde u\leq u_{k}\leq u_{k+1}\leq \widetilde U$ in $ B_1\setminus B_{\frac{1}{k}}$ for all $k\geq 3$.\\
Since $u_3=\widetilde u$, it follows by Lemma \ref{rieszlog}(ii) that the mapping
$$
x\longmapsto \left(\,\int\limits_{\frac{1}{3}<|y|<1} K_{\alpha, \beta}(x-y)u_3^p(y) dy\right) u_3^q(x)$$
is locally H\"older continuous in $ B_1\setminus B_{\frac{1}{3}}$. Thus, the existence and uniqueness of a solution $u_4\in \mathcal{C}^2(\overline{ B_1\setminus B_{\frac{1}{3}}})$ to \eqref{lapk} follows from the standard sub and super-solution method (see for instance  \cite[Section 3.2]{P92}).
Also, from \eqref{lap2} and \eqref{lapk} we have 
$$
 -\Delta u_4+ V(x) u_4=\left(\,\int\limits_{\frac{1}{3}<|y|<1} K_{\alpha, \beta}(x-y)u_3^p(y) dy\right) u_3^q \geq  -\Delta u_3+ V(x) u_3 \quad\mbox{ in } B_1\setminus B_{\frac{1}{3}},$$
$$
u_{4}=\widetilde u =u_3 \quad \mbox{ on }  \partial( B_1\setminus B_{\frac{1}{3}}).
$$
By the maximum principle (see, e.g., \cite[Theorem 2, pag 346]{E10}), it follows that $u_4\geq u_3$ in $B_1\setminus B_{\frac{1}{3}}$. 
Further, we have 
$$
\begin{aligned}
 -\Delta u_4+ V(x) u_4 &=\left(\,\int\limits_{\frac{1}{3}<|y|<1} K_{\alpha, \beta}(x-y) u_3^p(y) dy\right) u_3^q \quad\mbox{ in } B_1\setminus B_{\frac{1}{3}},\\[0.1cm]
 &\leq \left(\,\int\limits_{\frac{1}{3}<|y|<1} K_{\alpha, \beta}(x-y)\widetilde U^p(y) dy\right) \widetilde U^q\\[0.1cm]
 &\leq  -\Delta \widetilde U+V(x) \widetilde U\quad\mbox{ in } B_1\setminus B_{\frac{1}{3}},
 \end{aligned}
 $$
and $u_4=\widetilde u\leq \widetilde U$ on $\partial( B_1\setminus B_{\frac{1}{3}})$. By the maximum principle again, we find $u_4\leq \widetilde U$ in $B_1\setminus B_{\frac{1}{3}}$. Hence, we proved $\widetilde u=u_3\leq u_4\leq \widetilde U$ in $B_1\setminus B_{\frac{1}{3}}$. Next, we proceed by induction to deduce the conclusion of this step.

\medskip

\noindent{\bf Step 2:} Let $u(x):=\lim_{k\to \infty}u_k(x)$, $x\in B_1\setminus\{0\}$; then $u\in \mathcal{C}^2(B_1\setminus\{0\})$ is a solution of \eqref{eqVV}.
Since the sequence $\{u_k\}_{k\geq 3}$ is monotone and $\widetilde u\leq u_k\leq \widetilde U$, the above function $u$ is well defined. Let $B$ be an open ball such that $\overline B\subset B_1\setminus\{0\}$. Thus, letting
$$
g_k(x):=\left(\,\int\limits_{\frac{1}{k}<|y|<1} K_{\alpha, \beta}(x-y)u_k^p(y) dy \right) u_k^q(x)\, , \quad x\in  B_1\setminus B_{\frac{1}{k}},
$$
we have by $\widetilde u\leq u_k\leq \widetilde U$ and hypothesis (i) that 
$$
\{u_k\}_{k\geq 3},\, \{g_k\}_{k\geq 3} \mbox{ are bounded in } L^s(B) \mbox{ for all } s > 1.
$$
By standard elliptic regularity we deduce that for all $s>1$ the sequence $\{u_k\}_{k\geq 3}$ is bounded in $W^{2, s}(B)$ and thus, from the compact embedding $W^{2,s}(B)\hookrightarrow \mathcal{C}^{1, \nu}(\overline B)$, $s>N/2$, it is bounded in $ \mathcal{C}^{1, \nu}(\overline B)$ for some $0<\nu<1$. In particular $u\in \mathcal{C}^{1, \nu}_{loc}(B_1\setminus\{0\})$ is a weak solution of \eqref{eqVV}. Next, Lemma \ref{rieszlog}(i) yields 
$$
(K_{\alpha, \beta}*u^p)u^q\in \mathcal{C}^{0, \mu}(\overline B) \quad\mbox{ for some } 0<\mu<1,
$$
and thus, by Schauder estimates, we deduce $u\in \mathcal{C}^2(\overline B)$. Thus, $u\in \mathcal{C}^2(B_1\setminus\{0\})$ is a solution of \eqref{eqVV} which satisfies $\widetilde u\leq u\leq \widetilde U$ in $B_1\setminus\{0\}$.
\end{proof}

\section{Proof of Theorem \ref{thm3} }

(i) We require the following auxiliary result.

\begin{lemma}\label{lemd}
Let $0\leq u\in \mathcal{C}^2(B_1\setminus\{0\})$ and $g\in \mathcal{C}(B_1\setminus\{0\})\cap L^1_{loc}(B_1)$ be such that
\begin{equation}\label{eqdelta1}
-\Delta u=g(x)+m\delta_0\quad\mbox{in }\mathscr{D}'(B_1)\, , m\in \R.
\end{equation}
Then $m\geq 0$ and there exists a harmonic function $h:\overline B_{\frac{1}{2}}\to \R$ such that
\begin{equation}\label{eqdelta2}
u(x)=h(x)+W(x)+mE(|x|)\quad\mbox{in }\overline B_{\frac{1}{2}}\setminus\{0\},
\end{equation}
where 
\begin{equation}\label{eqdelta3}
W(x)=\int_{B_{\frac{1}{2}}} g(x) E(|x-y|)dy.
\end{equation}
Furthermore, we have
\begin{equation}\label{eqdelta4}
\lim_{r\to 0}\frac{\overline u(r)}{E(r)}=m.
\end{equation}
\end{lemma}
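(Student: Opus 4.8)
The plan is to peel off from $u$ the Newtonian potential of $g$ over $B_{1/2}$ together with $m$ copies of the fundamental solution, and to recognise what is left as a harmonic function. (Note that \eqref{eqdelta1} already forces $u\in L^1_{loc}(B_1)$, since $u\ge 0$ is a function defining a distribution on $B_1$.)

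First I would fix $W$ as in \eqref{eqdelta3}, $W(x)=\int_{B_{1/2}}g(y)E(|x-y|)\,dy$. Since $g\in L^1_{loc}(B_1)$ and $E(|\cdot|)\in L^1_{loc}(\R^N)$, Fubini applied to $|g(y)|E(|x-y|)$ gives $W\in L^1(B_{1/2})\subset L^1_{loc}(B_{1/2})$; moreover, splitting $g$ near a point $x_0\ne 0$ into its part on a small ball $B_\delta(x_0)$ with $\overline{B_\delta(x_0)}\subset B_1\setminus\{0\}$ (on which $g$ is bounded, so the corresponding Newtonian potential is continuous) and its part away from $x_0$ (for which $E(|x-y|)$ is bounded and smooth in $x$ near $x_0$), one sees that $W$ is continuous, hence finite, on $B_1\setminus\{0\}$. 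Standard potential theory gives $-\Delta W=g$ in $\mathscr{D}'(B_{1/2})$. Consequently $h:=u-W-mE(|\cdot|)$, a continuous function on $B_{1/2}\setminus\{0\}$ lying in $L^1_{loc}(B_{1/2})$, satisfies
$$-\Delta h=(g+m\delta_0)-g-m\delta_0=0\qquad\text{in }\mathscr{D}'(B_{1/2}),$$
using $-\Delta E=\delta_0$. By Weyl's lemma $h$ coincides a.e.\ with a function harmonic — hence $\mathcal C^\infty$, in particular regular at the origin — in $B_{1/2}$; since $h$ is already continuous on $B_{1/2}\setminus\{0\}$ it \emph{equals} that harmonic function there, and the latter extends it across $0$. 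This yields \eqref{eqdelta2}--\eqref{eqdelta3} with $h$ harmonic in $B_{1/2}$ (and continuous on $\overline B_{1/2}$).

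To get \eqref{eqdelta4} and $m\ge0$ I would take spherical averages in \eqref{eqdelta2}. The mean value property gives $\overline h(r)=h(0)$ for $0<r<\tfrac12$, and $\overline{E(|\cdot|)}(r)=E(r)$ by radial symmetry. For $W$, Fubini and the classical spherical-average identity for the fundamental solution,
$$\frac{1}{\sigma_N r^{N-1}}\int_{\partial B_r}E(|x-y|)\,dS(x)=E\bigl(\max\{r,|y|\}\bigr)$$
(Newton's theorem for the potential of a uniform sphere; alternatively the left side is radial in $y$, harmonic off $\partial B_r$, with the correct behaviour at $0$ and at $\infty$), yield
$$\overline W(r)=\int_{B_{1/2}}g(y)\,E\bigl(\max\{r,|y|\}\bigr)\,dy,\qquad\text{whence}\qquad\frac{\overline W(r)}{E(r)}=\int_{|y|<r}g(y)\,dy+\int_{r\le|y|<\frac12}g(y)\,\frac{E(|y|)}{E(r)}\,dy.$$
The first integral tends to $0$ as $r\to0^+$ by absolute continuity of the integral ($g\in L^1(B_{1/2})$); in the second, $0\le E(|y|)/E(r)\le 1$ on $\{|y|\ge r\}$ (as $E$ is decreasing) and the integrand tends to $0$ pointwise since $E(r)\to\infty$, so dominated convergence with majorant $|g|\,\mathbf 1_{B_{1/2}}\in L^1$ sends it to $0$ as well. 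Hence $\overline W(r)=o(E(r))$, and therefore
$$\frac{\overline u(r)}{E(r)}=\frac{h(0)}{E(r)}+\frac{\overline W(r)}{E(r)}+m\;\xrightarrow[\,r\to0^+\,]{}\;m,$$
which is \eqref{eqdelta4}; since $\overline u(r)\ge0$ and $E(r)>0$, the limit is $\ge0$, i.e.\ $m\ge0$.

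The one genuinely substantive point — and the step I expect to be the main obstacle — is the analysis of $\overline W(r)$: one needs the \emph{exact} spherical average of $E(|\cdot-y|)$ (Newton's theorem) in order to replace $E(r)$ by the smaller $E(|y|)$ on the bulk region $r\le|y|<\tfrac12$, which is precisely what makes the dominated-convergence argument deliver $\overline W(r)=o(E(r))$; a crude pointwise bound on $E(|x-y|)$ over $\partial B_r$ is not sharp enough. The remaining ingredients — the distributional identity $-\Delta W=g$, Weyl's lemma, and the mean value property — are routine.
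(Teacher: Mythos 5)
Your proof is correct, but it reaches the decomposition \eqref{eqdelta2} by a genuinely different route than the paper. The paper does not subtract $mE$ at the outset: it first integrates the averaged ODE $-(r^{N-1}\overline u')'=r^{N-1}\overline g$ and uses l'H\^opital to show that $\lim_{r\to0}\overline u(r)/E(r)$ exists, deduces the smallness estimate $\int_{|x|<\varepsilon}u=o(\varepsilon\log\frac1\varepsilon)$ (resp.\ $o(\varepsilon^2)$), shows $\overline W(r)=o(E(r))$ exactly as you do (via the Helms/Newton spherical-average identity and dominated convergence), and then invokes a B\^ocher-type removable-singularity lemma (Lemma~1.2 of \cite{GT16book}) applied to the function $u-W$, which is harmonic only in the \emph{punctured} ball, to extract $u-W=h+\gamma E$; the coefficient $\gamma$ is identified with $m$ only at the end by comparing distributional Laplacians. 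You instead exploit the fact that \eqref{eqdelta1} is posed in $\mathscr{D}'(B_1)$, i.e.\ across the origin with the mass $m$ prescribed, so that $u-W-mE$ is distributionally harmonic on the \emph{full} ball and Weyl's lemma alone yields the harmonic extension; this bypasses both the l'H\^opital step and the B\^ocher lemma, and \eqref{eqdelta4} then falls out of the decomposition by averaging. Your shortcut is legitimate here precisely because $m$ is given in the hypothesis; the paper's longer route is the one that would survive if one only knew the equation on $B_1\setminus\{0\}$. The two arguments share the one genuinely delicate ingredient, namely the exact identity for the spherical average of $E(|\cdot-y|)$ giving $\overline W(r)=o(E(r))$, and you have handled it correctly (including the bound $0\le E(|y|)/E(r)\le1$ needed for domination in both $N=2$ and $N\ge3$).
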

\noindent Various versions of the above result are known in the literature (see \cite{BL81}, \cite[Theorem 1.1.]{GT16book} or \cite[Lemma 1]{L96}) especially in the case $g\geq 0$ (so that $u$ becomes super-harmonic in $B_1\setminus\{0\}$). We provide here a complete detail of its proof and emphasize that $g$ may be any sign-changing function that is locally integrable.
\begin{proof}
   From \eqref{eqdelta1}, we have that 
   $$-\Delta u = g \quad\text{in } B_1 \setminus \{0\}.$$
   Taking the spherical average, we obtain:
   $$-(r^{N-1}\overline{u}'(r))' = r^{N-1}\overline{g}(r) \quad \text{for all } 0 < r < 1.$$
   Let $0 < r < \rho < 1$. Integrating the above equality over $[r, \rho]$ yields
   $$r^{N-1}\overline{u}'(r) - \rho^{N-1}\overline{u}'(\rho) = \int_r^{\rho} s^{N-1}\overline{g}(s) \, ds.$$
   Thus, there exists some $c \in \R$ such that
   \begin{equation*}
     r^{N-1}\overline{u}'(r) = c + \frac{1}{\sigma_N}\int_{r < |x| < \rho} g(|x|) \, dx \quad \text{for all } 0 < r < \rho.
   \end{equation*}
   Then, by l'H\^{o}pital's rule, we deduce that there exists
   $$l := \lim\limits_{r \to 0} \frac{\overline{u}(r)}{E(r)} \geq 0.$$
   In particular, we have
   \begin{equation}\label{eqdeltaU}
     \int_{|x| < \varepsilon} u(x) \, dx = 
     \begin{cases}
        \displaystyle o\left(\varepsilon\log\frac{1}{\varepsilon}\right) & \text{if } N = 2, \\[0.3cm]
        \displaystyle o(\varepsilon^2) & \text{if } N \geq 3.
     \end{cases}
    \end{equation}
    On the other hand, by Fubini's theorem, we obtain:
    \begin{align*}
        \int_{B_{\frac{1}{2}}} W(x)\Delta \varphi (x) dx &=  \int_{B_{\frac{1}{2}}} g(y) \left( \int_{B_{\frac{1}{2}}} E(|y - x|)\Delta \varphi(x) dx \right) dy \nonumber \\
        &=  -\int_{B_{\frac{1}{2}}} g(y)\varphi(y) dy \quad\text{for all } \varphi \in C_c^{\infty}(B_{\frac{1}{2}}).
    \end{align*}
    It follows that
    \begin{equation*}
        -\Delta W = g \quad\text{in } \mathscr{D}'(B_{\frac{1}{2}}).
    \end{equation*}
    Thus, 
    $$-\Delta (u - W) = 0 \quad\text{in } \mathscr{D}'(B_{\frac{1}{2}}\setminus \{0\}),$$ 
    meaning that $u - W$ is a harmonic function in $B_{\frac{1}{2}}\setminus \{0\}$. \\
    By Lemma 1.6.1 in \cite{H09}, we have
    \begin{equation}\label{H09}
        \frac{1}{\sigma_N r^{N-1}}\int_{|x| = r} E(|x - y|) d\sigma(x) = 
        \begin{cases}
            E(r) \quad\text{if } |y| < r,\\
            E(y) \quad\text{if } |y| > r.
        \end{cases}
    \end{equation}
    Using \eqref{H09} and the definition of $W$ in \eqref{eqdelta3}, we find
    \begin{align*}
        \overline{W}(r) &= \frac{1}{\sigma_N r^{N-1}}\int_{|x|=r} \left(\int_{|y| < \frac{1}{2}} E(|y - x|)g(y) dy \right) d\sigma(x) \\
        &= \int_{|y| < r} g(y) \left(\frac{1}{\sigma_N r^{N-1}}\int_{|x| = r}E(|y - x|) d\sigma(x) \right) dy \\
        &\quad\quad + \int_{r < |y| < \frac{1}{2}} g(y) \left(\frac{1}{\sigma_N r^{N-1}}\int_{|x| = r}E(|y - x|) d\sigma(x) \right) dy\\
        &= E(r) \int_{|y| < r} g(y) dy + \int_{r < |y| < \frac{1}{2}} g(y)E(y) dy.
    \end{align*}
    Applying the Lebesgue Dominated Convergence Theorem, we deduce
    \begin{equation}\label{eqdeltaN}
        \overline{W}(r) = o(E(r)) \quad\text{as } r \to 0.
    \end{equation}
    Combining \eqref{eqdeltaU} and \eqref{eqdeltaN}, we obtain:
    \begin{equation*}
        \int_{|x| < \varepsilon} |u(x) - W(x)| dx = o(\varepsilon) \quad\text{as } \varepsilon \to 0.
    \end{equation*}
    From Lemma 1.2 in \cite{GT16book}, it follows that $u(x) - W(x) - \gamma E(|x|)$ has a harmonic extension to $B_{\frac{1}{2}}$ for some $\gamma \in \R$. Thus, 
    $$u(x) = h(x) + W(x) + \gamma E(|x|) \quad\text{in } B_{\frac{1}{2}} \setminus \{0\},$$
    where $h: B_{\frac{1}{2}} \to \R$ is harmonic.\\
    Consequently, we find $-\Delta u = g + \gamma\delta_0$ in $\mathscr{D}'(B_{\frac{1}{2}})$ and then from \eqref{eqdelta1}, we conclude that 
    $\gamma = m$. This establishes \eqref{eqdelta2}. Finally, combining \eqref{eqdelta2} and \eqref{eqdeltaN}, we have
    $$\frac{\overline{u}(r)}{E(r)} \to m \quad\text{as } r \to 0.$$
    This concludes the proof.
\end{proof}
\noindent We may now proceed to the proof of part (i) in Theorem \ref{thm3}. Let $0\leq u\in \mathcal{C}^2(B_1\setminus\{0\})$ be a solution to \eqref{eq0} that fulfills \eqref{eqN2}. Then, $u$ satisfies \eqref{eq1d} and thus, by Lemma \ref{lemd}, \eqref{eqdelta1} holds with $g=(K_{\alpha, \beta}*u^p)u^q \in L^1_{loc}(B_{\frac{1}{2}})$. Thus, $u$ satisfies \eqref{eqdelta4} which in light of \eqref{eqN2} yields $m=0$.   Hence,
\begin{equation}\label{eqdelta5}
u(x)=h(x)+\int_{B_{\frac{1}{2}}} g(y)\log\left(\frac{2e}{|x-y|}\right) dy \quad\mbox{ in }\overline B_{\frac{1}{2}}\setminus\{0\},
\end{equation}
where $h:\overline B_{\frac{1}{2}}\to \R$ is a harmonic function. Note that by \eqref{eqN2} and Lemma \ref{lm_2} we have
$$
g(x)\leq c\log^q\frac{2e}{|x|} \quad\mbox{ in }B_1\setminus\{0\}.$$ 
Let us also note that by the above estimates on $g$ and Lemma \ref{lm_2}, the right hand-side of \eqref{eqdelta5} is finite at all points of $B_{\frac{1}{2}}$. Furthermore, by Lemma \ref{rieszlog}(ii) we see that
$$
\overline B_{\frac{1}{2}}\ni x\longmapsto \int_{B_{\frac{1}{2}}} g(y)\log\left(\frac{2e}{|x-y|}\right) dy
\quad \mbox{is H\"older continuous.}
$$
By \eqref{eqdelta5} it follows that $u\in \mathcal{C}^{0, \nu}_{loc}(B_{1})$ for some $0<\nu<1$. In particular,  by the same Lemma \ref{rieszlog}(ii) we deduce $u\in \mathcal{C}^{0, \mu}(\overline B_{\frac{1}{2}})$, for some $0<\mu<1$.\\
Let $w\in \mathcal{C}^2(\overline B_{\frac{1}{2}})$ be the solution of 
$$
\begin{cases}
-\Delta w=g(x)&\quad \mbox{ in }B_{\frac{1}{2}},\\
w=u& \quad \mbox{ on }\partial B_{\frac{1}{2}}.
\end{cases}$$
Since $u$ satisfies \eqref{eqdelta5} with $m=0$, we have  $-\Delta(w-u)=0$ in $\mathscr{D}'(B_{\frac{1}{2}})$, so $w-u$ is harmonic in $B_{\frac{1}{2}}$. From $w=u$ on $\partial B_{\frac{1}{2}}$, it follows that $u=w$ and in particular $u\in \mathcal{C}^2(B_1)$.\medskip\\
(ii) Our main tool is Proposition \ref{ps} in Section \ref{fss}. To this aim, we need to provide a pair of ordered sub and super-solution $(\widetilde u, \widetilde U)$ that satisfy the conditions in Proposition \ref{ps} and 
$$
\lim_{|x|\to 0} \frac{\widetilde u(x)}{\log\frac{2e}{|x|}}=\lim_{|x|\to 0} \frac{\widetilde U(x)}{\log\frac{2e}{|x|}}>0.
$$
Let $p>0$, $q>1$ and set $\widetilde u(x)=m\log\frac{2e}{|x|}$ and $\widetilde U(x)=m\left(\log\frac{2e}{|x|}+\log^\sigma \frac{2e}{|x|}\right)$, where $0<\sigma,m <1$.\\
We first note that $\widetilde U(x)\leq 2m \log\frac{2e}{|x|}$  in $B_1\setminus\{0\}$ and thus, by Lemma \ref{lm_2} we have
$$
(K_{\alpha, \beta}*\widetilde U^p)\widetilde U^q (x)\leq (2m)^{p+q} (K_{\alpha, \beta}* \log^p\frac{2e}{|x|}) \log^q\frac{2e}{|x|}\leq cm^{p+q}\log^q\frac{2e}{|x|} \quad\mbox{ in }B_1\setminus\{0\}.$$
Hence, for small $m>0$ we have
\begin{equation}\label{lap01}
\begin{aligned}
-\Delta \widetilde U(x)+\lambda V(x) \widetilde U(x) & \geq -\Delta \widetilde U(x) \\
&\geq  \frac{m\sigma(1-\sigma)}{|x|^2}\log^{\sigma-2}\frac{2e}{|x|}\\
& \geq cm^{p+q}\log^q\frac{2e}{|x|}\\
& \geq (K_{\alpha, \beta}*\widetilde U^p)\widetilde U^q (x)
\quad \mbox{ in }B_1\setminus\{0\}.   
\end{aligned}
\end{equation}
This shows that $\widetilde U$ satisfies \eqref{lap20}. It remains to check that $\widetilde u$ satisfies condition \eqref{lap2} for $\lambda>0$ small. Indeed, from \eqref{eqN21} with $\varepsilon=q-1>0$ and Lemma \ref{lm_2} we estimate
$$
\begin{aligned}
-\Delta \widetilde u(x)+\lambda V(x) \widetilde u(x) & = \lambda V(x) \widetilde u(x)\\
&\leq cm\lambda \log^{q} \frac{2e}{|x|}\\
&\leq \left(\,\int\limits_{\frac{1}{k}<|y|<1} K_{\alpha, \beta}(x-y)\widetilde u^p(y) dy\right) \widetilde u^q(x) \quad \mbox{ in }  B_1\setminus B_{\frac{1}{k}},
\end{aligned}
$$
for all $k\geq 3$. We now fulfill all the conditions in Proposition \ref{ps} and thus, we obtain a positive solution $u\in \mathcal{C}^2(B_1\setminus\{0\})$ to \eqref{eq0} such that $\widetilde u\leq u\leq \widetilde U$ in $B_1\setminus\{0\}$. In particular, we have 
$$
\lim_{|x|\to 0}\frac{u(x)}{\log\frac{2e}{|x|}}=m>0.
$$
This concludes the proof of Theorem \ref{thm3}.
\qed
\section{Proof of Theorem \ref{thm4}}
Assume $N \geq 3$ and that $0\leq u\in \mathcal{C}^2(B_1\setminus\{0\})$ is a solution of \eqref{eq0} with  $u(x) \simeq |x|^{2-N}$. Then, by the estimates in Lemma \ref{lm_1}(iii) we have:
\begin{equation}\label{thm_4}
    -\Delta u + \lambda V(x) u \simeq \Psi(x) \quad\text{in } B_1 \setminus \{0\},
\end{equation}
where
\begin{equation}\label{thm4e}
\Psi(x) = \left\{
    \begin{aligned}
        &|x|^{-q(N - 2)} \quad&\text{if } p < \frac{N - \alpha}{N - 2}&,\\[0.2cm]
        &|x|^{-q(N - 2)} \quad&\text{if } p = \frac{N - \alpha}{N - 2}&\text{ and } \beta < -1,\\[0.2cm]
        &|x|^{-q(N - 2)}\log\left(\log\frac{2e}{|x|}\right) \quad&\text{if } p = \frac{N - \alpha}{N - 2}&\text{ and } \beta = -1,\\[0.2cm]
        &|x|^{-q(N - 2)}\log^{1 + \beta}\frac{2e}{|x|} \quad&\text{if } p = \frac{N - \alpha}{N - 2}&\text{ and } \beta > -1,\\[0.2cm]
        &|x|^{N -\alpha - (p + q)(N - 2)}\log^{\beta}\frac{2e}{|x|} \quad&\text{if } p > \frac{N - \alpha}{N - 2}&.
    \end{aligned}\right.
\end{equation}
Next, we divide our proof into four cases.\medskip\\ 
{\bf Case 1}: $p < \frac{N - \alpha}{N - 2}$ or $p = \frac{N - \alpha}{N - 2}$ and $\beta < -1$.\\
Since $V(x) = o\left(|x|^{-(q-1)(N-2)}\right)$ as $|x| \to 0$, it follows from \eqref{thm_4} and \eqref{thm4e} that:
\begin{equation}\label{thm4c1}
    -\Delta u \geq C(1 - o(1))|x|^{-q(N - 2)}\quad\text{in } B_1 \setminus \{0\}.
\end{equation}
Then, there exists $0 < \rho < 1$ small such that
\begin{equation*}
    -\Delta u \geq C|x|^{-q(N - 2)} \quad\text{in } B_{\rho}\setminus \{0\}.
\end{equation*}
Thus, by Lemma \ref{lm_3}, we conclude that $|x|^{-q(N-2)} \in L^1_{loc}(B_{\rho})$, which implies that $q < \frac{N}{N - 2}$.\medskip\\ 
{\bf Case 2}: $p = \frac{N - \alpha}{N - 2}$ and $\beta = -1$.\\
In this case, we apply a similar argument to that in \eqref{thm4c1} to conclude the following:
\begin{equation*}
    -\Delta u \geq C|x|^{-q(N - 2)}\log\left(\log\frac{2e}{|x|}\right) \quad\text{in } B_{\rho} \setminus \{0\},
\end{equation*}
where $0 < \rho < 1$ small. Applying Lemma \ref{lm_3}, we deduce that $|x|^{-q(N - 2)}\log\left(\log\frac{2e}{|x|}\right) \in L^1_{loc}(B_{\rho})$, and therefore $q < \frac{N}{N - 2}$.\medskip\\
{\bf Case 3}: $p = \frac{N - \alpha}{N - 2}$ and $\beta > -1$.\\
As above, we apply a similar argument to that in \eqref{thm4c1} to conclude:
\begin{equation*}
    -\Delta u \geq C|x|^{-q(N - 2)}\log^{1 + \beta}\frac{2e}{|x|} \quad\text{in } B_{\rho} \setminus \{0\},
\end{equation*}
where $0 < \rho < 1$ small. Applying Lemma \ref{lm_3} again, we deduce that $|x|^{-q(N - 2)}\log^{1 + \beta}\frac{2e}{|x|} \in L^1_{loc}(B_{\rho})$, and therefore $q < \frac{N}{N - 2}$.\medskip\\
{\bf Case 4}: $p > \frac{N - \alpha}{N - 2}$.\\
We have
\begin{equation*}
    V(x)u(x) = o\left(|x|^{N - \alpha -(p + q)(N - 2)} \log^{\beta}\frac{2e}{|x|}\right) \quad \text{as } |x| \to 0.
\end{equation*}
It follows from \eqref{thm_4} that
\begin{equation}
    -\Delta u \geq C|x|^{N - \alpha - (p + q)(N - 2)}\log^{\beta}\frac{2e}{|x|} \quad\text{in } B_{\rho} \setminus \{0\},
\end{equation}
where $0 < \rho < 1$ small. By Lemma \ref{lm_3}, we derive $|x|^{N - \alpha - (p + q)(N - 2)}\log^{\beta}\frac{2e}{|x|} \in L^1_{loc}(B_{\rho})$ and it follows that
\begin{align*}
\text{either}\quad     p +  q < \frac{2N - \alpha}{N - 2} \quad \text{or} \quad p + q = \frac{2N - \alpha}{N - 2}\text{ and } \beta < -1.
\end{align*}
Since $p > \frac{N - \alpha}{N - 2}$, we also deduce that $q < \frac{N}{N - 2}$. 
\medskip\\
Conversely, assume now that either \eqref{eqN3} or \eqref{eqN4} holds, and let us construct a positive solution $u\in \mathcal{C}^2(B_1\setminus\{0\})$ to \eqref{eq0} such that $u(x)\simeq |x|^{2-N}$. 
To this aim. we need to provide two suitable functions $\widetilde u$ and $\widetilde U$ that fulfill the hypotheses in Proposition \ref{ps}. We split our analysis into three cases.\medskip\\
\noindent{\bf Case A:} $p+q>1$.  If \eqref{eqN3} holds we take $k>0$ such that
\begin{equation}\label{eqkk}
\max\{q(N-2), (p+q)(N-2)-N+\alpha\}-2<k<N-2.   
\end{equation}
If \eqref{eqN4} holds, we choose 
\begin{equation}\label{eqks}
0<\sigma<\min\{\log 2, -(\beta+1)\}.   
\end{equation}
Define next
$\widetilde u(x)=m|x|^{2-N}$ and
$$
\widetilde U(x)=
\begin{cases}
m(|x|^{2-N}+|x|^{-k}) &\quad\mbox{ if \eqref{eqN3} holds},\\
m\left(|x|^{2-N}+|x|^{2-N}\log^{-\sigma} \frac{2e}{|x|}\right)   &\quad\mbox{ if \eqref{eqN4} holds},
\end{cases}$$
where $m>0$ will be precised later.
Then
\begin{equation}\label{lap}
-\Delta \widetilde U(x)=
\begin{cases}
mk(N-2-k)|x|^{-k-2} &\quad\mbox{ if \eqref{eqN3} holds},\\
m|x|^{-N}\log^{-\sigma-1} \frac{2e}{|x|}\left(N-2-\frac{\sigma+1}{\log\frac{2e}{|x|}} \right)  &\quad\mbox{ if \eqref{eqN4} holds}. 
\end{cases}
\end{equation}
Using $\widetilde U(x)\leq  2m|x|^{2-N}$ and Lemma \ref{lm_1}(iii) we find 
$$
(K_{\alpha, \beta}*\widetilde U^p)\widetilde U^q\leq (2m)^{p+q} (K_{\alpha, \beta}* |x|^{-p(N-2)}) |x|^{-q(N-2)}\leq cm^{p+q}\Psi(x),$$
where $\Psi(x)$ is the function defined in \eqref{thm4e}.\\
Hence, from \eqref{eqkk}, \eqref{eqks} and \eqref{lap}, for $0<m<1$ small we deduce
\begin{equation}\label{lap1}
-\Delta \widetilde U(x)+\lambda V(x) \widetilde U(x) \geq -\Delta \widetilde U(x)\geq  (K_{\alpha, \beta}*\widetilde U^p)\widetilde U^q \quad \mbox{ in }B_1\setminus\{0\}.   
\end{equation}
We now fix $0<m<1$ with the above property and note that from Lemma \ref{lm_1}(ii) we have 
$$
\left(\int\limits_{\frac{1}{k}<|y|<1} K_{\alpha, \beta}(x-y)\widetilde u(y)^p dy\right) \widetilde u(x)^q\geq C\Psi(x) \quad \mbox{ for all } x\in B_1\setminus B_{\frac{1}{k}}\, , \,  k\geq 3.
$$
Also, by \eqref{eqV} we have 
$$
-\Delta \widetilde u+\lambda V(x) \widetilde u=\lambda V(x) \widetilde u \leq c\lambda \Psi(x) \quad\mbox{ in } B_1\setminus\{0\}.$$
For $\lambda>0$ small, we can now use Proposition \ref{ps}, and thus there exists $u\in \mathcal{C}^2(B_1\setminus\{0\})$ a solution of \eqref{eq0} such that $\widetilde u\leq u\leq \widetilde U$  in $B_1\setminus\{0\}$. In particular, we deduce $u(x)/|x|^{2-N}\to m$ as $|x|\to 0$.
\medskip\\
\noindent{\bf Case B:} $p+q<1$. Then $q<1$ and by \eqref{eqV} we have $V(x)=O(1)$ as $|x|\to 0$. Thus, $V$ is bounded in $\overline B_1$, so, for some constant $V_0>0$ we have 
\begin{equation}\label{Vz}
V(x) \leq V_0 \quad\mbox{ for all } x\in B_1\setminus\{0\}.
\end{equation}
Let $\mu=\lambda V_0>0$ and $\widetilde u(x)=G_\mu(x)$, where $G_\mu$ is the fundamental solution of $-\Delta+\mu I$. 
 We next fix $0<k<N-2$ that satisfies \eqref{eqkk} and take $\widetilde U(x)=M(|x|^{2-N}+|x|^{-k})$, where $M>1$ is large enough such that $\widetilde U\geq \widetilde u$ in $B_1\setminus\{0\}$. This is always possible in light of Lemma \ref{fs}.
\\
From the definition of $\widetilde u$ we have 
$$
-\Delta \widetilde u+\lambda V(x) \widetilde u\leq -\Delta \widetilde u+\lambda V_0 \widetilde u=0\quad\mbox{ in }B_1\setminus\{0\}, $$
so \eqref{lap2} holds. Also, from $\widetilde U(x)\leq 2M|x|^{2-N}$ in $B_1\setminus\{0\}$ and Lemma \ref{lm_1}(iii) we find
\begin{equation}\label{fsn}
(K_{\alpha, \beta}*\widetilde U^p)\widetilde U^q\leq (2M)^{p+q} (K_{\alpha, \beta}* |x|^{-p(N-2)}) |x|^{-q(N-2)}\leq cM^{p+q}\Psi(x),
\end{equation}
where $\Psi(x)$ is the function defined in \eqref{thm4e}. Thus, for $M>1$ large, from \eqref{eqkk} and \eqref{fsn} we deduce
$$
\begin{aligned}
-\Delta \widetilde U(x)+\lambda V(x) \widetilde U(x) & \geq -\Delta \widetilde U(x)\\
&=Mk(N-2-k)|x|^{-k-2}\\
&\geq CM\Psi(x) \\
&\geq cM^{p+q}\Psi(x) \\
& \geq   (K_{\alpha, \beta}*\widetilde U^p)\widetilde U^q \quad \mbox{ in }B_1\setminus\{0\}.  
\end{aligned}
$$
This allows us to use Proposition \ref{ps} in order to obtain a solution $u\in \mathcal{C}^2(B_1\setminus\{0\})$ of \eqref{eq0} such that $\widetilde u\leq u\leq \widetilde U$. In particular, we have $u(x)\simeq |x|^{2-N}$.
\medskip\\
\noindent{\bf Case C:} $p+q=1$.  Let $0<k<N-2$ satisfy \eqref{eqkk} and $\widetilde U(x)=M(|x|^{2-N}+|x|^{-k})$, where $M>1$ is large.\\
For $0<\rho<1$ small we have 
$$
\begin{aligned}
-\Delta \widetilde U(x)+\lambda V(x) \widetilde U(x) & \geq -\Delta \widetilde U(x)=Mk(N-2-k)|x|^{-k-2}\\
&> CM\Psi(x)  \geq   (K_{\alpha, \beta}*\widetilde U^p)\widetilde U^q \quad \mbox{ in }B_\rho\setminus\{0\}.  
\end{aligned}
$$
By the continuity of $V$ in $\overline B_1\setminus B_\rho$, we may  take $\lambda>0$ large so that 
$$
\begin{aligned}
-\Delta \widetilde U(x)+\lambda V(x) \widetilde U(x) & \geq \lambda M V(x)|x|^{-k}\geq  
 CM\Psi(x)  \geq   (K_{\alpha, \beta}*\widetilde U^p)\widetilde U^q \quad \mbox{ in }B_1\setminus B_\rho.  
\end{aligned}
$$
Hence, for $\lambda>0$ large we have
\begin{equation}\label{eqM}
-\Delta \widetilde U(x)+\lambda V(x) \widetilde U(x) \geq   (K_{\alpha, \beta}*\widetilde U^p)\widetilde U^q \quad \mbox{ in }B_1\setminus\{0\}.  
\end{equation}
For such a value of $\lambda>0$ we let $\mu=\lambda V_0$, where $V_0>0$ is a constant that fulfills \eqref{Vz}, and take $\widetilde u=G_\mu$. Then $-\Delta \widetilde u+\lambda V(x)\widetilde u\leq 0$ in $B_1\setminus\{0\}$. Note that since $p+q=1$, the size of $M>1$ does not affect the validity of \eqref{eqM}. We may now take $M>1$ sufficiently large to ensure $\widetilde U\geq \widetilde u$ in $B_1\setminus\{0\}$. Hence, we fulfill all the hypotheses in Proposition \ref{ps} and thus deduce the existence of a positive solution $u\in \mathcal{C}^2(B_1\setminus\{0\})$ to \eqref{eq0} with the property $u(x)\simeq |x|^{2-N}$.
This concludes the proof of Theorem \ref{thm4}.
\qed
\medskip\\
\noindent{\bf Proof of Corollary \ref{cor2}.} Let $\lambda>0$ be fixed and assume first that $p+q\neq 1$. Let $0<k<N-2$ and $\sigma>0$ that satisfy \eqref{eqkk} and \eqref{eqks} respectively.
Define 
$\widetilde u(x)=M G_\lambda(x)$ and
$$
\widetilde U(x)=
\begin{cases}
M(G_\lambda(x)+|x|^{-k}) &\quad\mbox{ if \eqref{eqN3} holds},\\
M(G_\lambda(x)+|x|^{2-N}\log^{-\sigma} \frac{2e}{|x|})   &\quad\mbox{ if \eqref{eqN4} holds}. 
\end{cases}$$
It is easy to see that $-\Delta \widetilde u+\lambda \widetilde u=0$ in $B_1\setminus\{0\}$ while 
$$
-\Delta \widetilde U(x)+\lambda \widetilde U(x) \geq -\Delta \widetilde U(x)\geq  (K_{\alpha, \beta}*\widetilde U^p)\widetilde U^q \quad \mbox{ in }B_1\setminus\{0\},$$
provided $0<M<1$ is small if $p+q>1$ and $M>1$ is large if $p+q<1$. Hence, we can invoke Proposition \ref{ps} in order to obtain a positive solution $u\in \mathcal{C}^2(B_1\setminus\{0\})$ of \eqref{lala} such that $\widetilde u\leq u\leq \widetilde U$. In particular, by Lemma \ref{fs} we have 
$$
\lim_{|x|\to 0} \frac{u(x)}{|x|^{2-N}}=\lim_{|x|\to 0} \frac{M G_\lambda(x)}{|x|^{2-N}}=\frac{M}{(N-2)\sigma_N}>0.
$$
If $p+q=1$, we can define $\widetilde u$ and $\widetilde U$ as above in which we take $M=1$. 
As in Case C of the proof of Theorem \ref{thm4} we need $\lambda>0$ sufficiently large in order for \eqref{eqM} (with $V\equiv 1$) to hold. The same approach as above yields the conclusion. \qed

\section*{Declarations}

\subsection*{Conflict of interest}

On behalf of all authors, the corresponding author states that there is no conflict of interest.

\subsection*{Data Availability}

Data sharing not applicable to this article as no datasets were generated or analysed during the current study.

\end{document}